\documentclass[12pt]{amsart}
\usepackage{txfonts}      %{article} was 12pt latex e
\usepackage{amssymb}
\usepackage{eucal}
\usepackage{amsmath}
\usepackage{amscd}
\usepackage{xcolor}
\usepackage{multicol}
\usepackage[all]{xy}           %xypic macro for latex2.09
\usepackage{graphicx}
\usepackage{color}
\usepackage{colordvi}
\usepackage{xspace}
\usepackage{tikz}
\usepackage{makecell}
\usepackage{appendix}
\usepackage{amsthm}
\usepackage{enumerate}
\usepackage{longtable}

\usepackage{ifpdf}
\ifpdf
\usepackage[colorlinks,final,backref=page,hyperindex]{hyperref}
\else
\usepackage[colorlinks,final,backref=page,hyperindex,hypertex]{hyperref}
\fi

\usepackage[active]{srcltx} %SRC Specials for DVI Searching

\begin{document}

%%%%%%%%%%%%%%%%%%%%%%%% Statements

\newtheorem{thm}{Theorem}[section]
\newtheorem{lem}[thm]{Lemma}
\newtheorem{cor}[thm]{Corollary}
\newtheorem{pro}[thm]{Proposition}
\theoremstyle{definition}
\newtheorem{defi}[thm]{Definition}
\newtheorem{ex}[thm]{Example}
\newtheorem{rmk}[thm]{Remark}
\newtheorem{pdef}[thm]{Proposition-Definition}
\newtheorem{condition}[thm]{Condition}

\renewcommand{\labelenumi}{{\rm(\alph{enumi})}}
\renewcommand{\theenumi}{\alph{enumi}}

\newcommand {\emptycomment}[1]{} %to remove paragraphs

\newcommand{\nc}{\newcommand}
\newcommand{\delete}[1]{}
\newcommand{\xrightleftharpoons}[2]{\mathrel{\mathop{\rightleftharpoons}\limits_{#2}^{#1}}}

\nc{\tred}[1]{\textcolor{red}{#1}}
\nc{\tblue}[1]{\textcolor{blue}{#1}}
\nc{\tgreen}[1]{\textcolor{green}{#1}}
\nc{\tpurple}[1]{\textcolor{purple}{#1}}
\nc{\tgray}[1]{\textcolor{gray}{#1}}
\nc{\torg}[1]{\textcolor{orange}{#1}}
\nc{\tmag}[1]{\textcolor{magenta}}
\nc{\btred}[1]{\textcolor{red}{\bf #1}}
\nc{\btblue}[1]{\textcolor{blue}{\bf #1}}
\nc{\btgreen}[1]{\textcolor{green}{\bf #1}}
\nc{\btpurple}[1]{\textcolor{purple}{\bf #1}}

\nc{\todo}[1]{\tred{To do:} #1}

%\delete{
    \nc{\mlabel}[1]{\label{#1}}  % Use this to suppress names
    \nc{\mcite}[1]{\cite{#1}}  % Use this to suppress names
    \nc{\mref}[1]{\ref{#1}}  % Use this to suppress names
    \nc{\meqref}[1]{\eqref{#1}}  % Use this to suppress names
    \nc{\mbibitem}[1]{\bibitem{#1}} % Use this to show number
%}

\delete{
%    \nc{\mlabel}[1]{\label{#1}  % Use the next two lines to show names
%        { {\small\tgreen{\tt{{\ }(#1)}}}}}
    \nc{\mcite}[1]{\cite{#1}{\small{\tt{{\ }(#1)}}}}  % Use this lines to show names
    \nc{\mref}[1]{\ref{#1}{\small{\tred{\tt{{\ }(#1)}}}}}  % Use this lines to show names
    \nc{\meqref}[1]{\eqref{#1}{{\tt{{\ }(#1)}}}}  % Use this lines to show names
    \nc{\mbibitem}[1]{\bibitem[\bf #1]{#1}} % Use this to show name
}

%    \nc{\mlabel}[1]{  % Use the next two lines to show names
%           { {\small\tgreen{\tt{{\ }(#1)}}}}}

\nc{\cm}[1]{\textcolor{red}{Chengming:#1}}
\nc{\yy}[1]{\textcolor{blue}{Yanyong: #1}}
%\nc{\lit}[2]{\textcolor{blue}{#1}{ \textcolor{purple}{(#2)}}}
%\nc{\lit}[2]{\textcolor{blue}{#1}{}} %use this line instead of the previous one to show only the new changes
\nc{\li}[1]{\textcolor{purple}{#1}}
\nc{\lir}[1]{\textcolor{purple}{Li:#1}}

\nc{\revise}[1]{\textcolor{blue}{#1}}

%%%%%%%% new symbols

\nc{\tforall}{\ \ \text{for all }}
\nc{\hatot}{\,\widehat{\otimes} \,}
\nc{\complete}{completed\xspace}
\nc{\wdhat}[1]{\widehat{#1}}

\nc{\ts}{\mathfrak{p}}
\nc{\mts}{c_{(i)}\ot d_{(j)}}

\nc{\NA}{{\bf NA}}
\nc{\LA}{{\bf Lie}}
\nc{\CLA}{{\bf CLA}}

\nc{\cybe}{CYBE\xspace}
\nc{\nybe}{NYBE\xspace}
\nc{\ccybe}{CCYBE\xspace}

\nc{\ndend}{pre-Novikov\xspace}
\nc{\calb}{\mathcal{B}}
\nc{\rk}{\mathrm{r}}
\newcommand{\g}{\mathfrak g}
\newcommand{\h}{\mathfrak h}
\newcommand{\pf}{\noindent{$Proof$.}\ }
\newcommand{\frkg}{\mathfrak g}
\newcommand{\frkh}{\mathfrak h}
\newcommand{\Id}{\rm{Id}}
\newcommand{\gl}{\mathfrak {gl}}
\newcommand{\ad}{\mathrm{ad}}
\newcommand{\add}{\frka\frkd}
\newcommand{\frka}{\mathfrak a}
\newcommand{\frkb}{\mathfrak b}
\newcommand{\frkc}{\mathfrak c}
\newcommand{\frkd}{\mathfrak d}
\newcommand {\comment}[1]{{\marginpar{*}\scriptsize\textbf{Comments:} #1}}

\nc{\vspa}{\vspace{-.1cm}}
\nc{\vspb}{\vspace{-.2cm}}
\nc{\vspc}{\vspace{-.3cm}}
\nc{\vspd}{\vspace{-.4cm}}
\nc{\vspe}{\vspace{-.5cm}}

%%%%%%%%%%%%%%%%%%%%%%% old symbols

\nc{\disp}[1]{\displaystyle{#1}}
\nc{\bin}[2]{ (_{\stackrel{\scs{#1}}{\scs{#2}}})}  %binomial coeff
\nc{\binc}[2]{ \left (\!\! \begin{array}{c} \scs{#1}\\
    \scs{#2} \end{array}\!\! \right )}  %binomial coeff
\nc{\bincc}[2]{  \left ( {\scs{#1} \atop
    \vspace{-.5cm}\scs{#2}} \right )}  %binomial coeff
\nc{\ot}{\otimes}
\nc{\sot}{{\scriptstyle{\ot}}}
\nc{\otm}{\overline{\ot}}
\nc{\ola}[1]{\stackrel{#1}{\la}}%${\Bbb Z}$

\nc{\scs}[1]{\scriptstyle{#1}} \nc{\mrm}[1]{{\rm #1}}

\nc{\dirlim}{\displaystyle{\lim_{\longrightarrow}}\,}
\nc{\invlim}{\displaystyle{\lim_{\longleftarrow}}\,}

\nc{\bfk}{{\bf k}} \nc{\bfone}{{\bf 1}}
\nc{\rpr}{\circ}
%\nc{\apr}{\cdot}
\nc{\dpr}{{\tiny\diamond}}
\nc{\rprpm}{{\rpr}}

%%%%%%%%%%%%%%%%%%%%% roman fonts, in alphabetic order
\nc{\mmbox}[1]{\mbox{\ #1\ }} \nc{\ann}{\mrm{ann}}
\nc{\Aut}{\mrm{Aut}} \nc{\can}{\mrm{can}}
\nc{\twoalg}{{two-sided algebra}\xspace}
\nc{\colim}{\mrm{colim}}
\nc{\Cont}{\mrm{Cont}} \nc{\rchar}{\mrm{char}}
\nc{\cok}{\mrm{coker}} \nc{\dtf}{{R-{\rm tf}}} \nc{\dtor}{{R-{\rm
tor}}}
\renewcommand{\det}{\mrm{det}}
\nc{\depth}{{\mrm d}}
\nc{\End}{\mrm{End}} \nc{\Ext}{\mrm{Ext}}
\nc{\Fil}{\mrm{Fil}} \nc{\Frob}{\mrm{Frob}} \nc{\Gal}{\mrm{Gal}}
\nc{\GL}{\mrm{GL}} \nc{\Hom}{\mrm{Hom}} \nc{\hsr}{\mrm{H}}
\nc{\hpol}{\mrm{HP}}  \nc{\id}{\mrm{id}} \nc{\im}{\mrm{im}}

\nc{\incl}{\mrm{incl}} \nc{\length}{\mrm{length}}
\nc{\LR}{\mrm{LR}} \nc{\mchar}{\rm char} \nc{\NC}{\mrm{NC}}
\nc{\mpart}{\mrm{part}} \nc{\pl}{\mrm{PL}}
\nc{\ql}{{\QQ_\ell}} \nc{\qp}{{\QQ_p}}
\nc{\rank}{\mrm{rank}} \nc{\rba}{\rm{RBA }} \nc{\rbas}{\rm{RBAs }}
\nc{\rbpl}{\mrm{RBPL}}
\nc{\rbw}{\rm{RBW }} \nc{\rbws}{\rm{RBWs }} \nc{\rcot}{\mrm{cot}}
\nc{\rest}{\rm{controlled}\xspace}
\nc{\rdef}{\mrm{def}} \nc{\rdiv}{{\rm div}} \nc{\rtf}{{\rm tf}}
\nc{\rtor}{{\rm tor}} \nc{\res}{\mrm{res}} \nc{\SL}{\mrm{SL}}
\nc{\Spec}{\mrm{Spec}} \nc{\tor}{\mrm{tor}} \nc{\Tr}{\mrm{Tr}}
\nc{\mtr}{\mrm{sk}}

%%%%%%%%%%%%%%%%%% bold face
\nc{\ab}{\mathbf{Ab}} \nc{\Alg}{\mathbf{Alg}}

%%%%%%%%%%%%%%%%%%%Bbb fonts
\nc{\BA}{{\mathbb A}} \nc{\CC}{{\mathbb C}} \nc{\DD}{{\mathbb D}}
\nc{\EE}{{\mathbb E}} \nc{\FF}{{\mathbb F}} \nc{\GG}{{\mathbb G}}
\nc{\HH}{{\mathbb H}} \nc{\LL}{{\mathbb L}} \nc{\NN}{{\mathbb N}}
\nc{\QQ}{{\mathbb Q}} \nc{\RR}{{\mathbb R}} \nc{\BS}{{\mathbb{S}}} \nc{\TT}{{\mathbb T}}
\nc{\VV}{{\mathbb V}} \nc{\ZZ}{{\mathbb Z}}

%%%%%%%%%%%%%%%%%%% cal fonts

\nc{\calao}{{\mathcal A}} \nc{\cala}{{\mathcal A}}
\nc{\calc}{{\mathcal C}} \nc{\cald}{{\mathcal D}}
\nc{\cale}{{\mathcal E}} \nc{\calf}{{\mathcal F}}
\nc{\calfr}{{{\mathcal F}^{\,r}}} \nc{\calfo}{{\mathcal F}^0}
\nc{\calfro}{{\mathcal F}^{\,r,0}} \nc{\oF}{\overline{F}}
\nc{\calg}{{\mathcal G}} \nc{\calh}{{\mathcal H}}
\nc{\cali}{{\mathcal I}} \nc{\calj}{{\mathcal J}}
\nc{\call}{{\mathcal L}} \nc{\calm}{{\mathcal M}}
\nc{\caln}{{\mathcal N}} \nc{\calo}{{\mathcal O}}
\nc{\calp}{{\mathcal P}} \nc{\calq}{{\mathcal Q}} \nc{\calr}{{\mathcal R}}
\nc{\calt}{{\mathcal T}} \nc{\caltr}{{\mathcal T}^{\,r}}
\nc{\calu}{{\mathcal U}} \nc{\calv}{{\mathcal V}}
\nc{\calw}{{\mathcal W}} \nc{\calx}{{\mathcal X}}
\nc{\CA}{\mathcal{A}}

%%%%%%%%%%%%%%%%%%  frak fonts
\nc{\fraka}{{\mathfrak a}} \nc{\frakB}{{\mathfrak B}}
\nc{\frakb}{{\mathfrak b}} \nc{\frakd}{{\mathfrak d}}
\nc{\oD}{\overline{D}}
\nc{\frakF}{{\mathfrak F}} \nc{\frakg}{{\mathfrak g}}
\nc{\frakm}{{\mathfrak m}} \nc{\frakM}{{\mathfrak M}}
\nc{\frakMo}{{\mathfrak M}^0} \nc{\frakp}{{\mathfrak p}}
\nc{\frakS}{{\mathfrak S}} \nc{\frakSo}{{\mathfrak S}^0}
\nc{\fraks}{{\mathfrak s}} \nc{\os}{\overline{\fraks}}
\nc{\frakT}{{\mathfrak T}}
\nc{\oT}{\overline{T}}
%\nc{\frakx}{{\mathfrak x}}
\nc{\frakX}{{\mathfrak X}} \nc{\frakXo}{{\mathfrak X}^0}
\nc{\frakx}{{\mathbf x}}
%\nc{\frakTxo}{{\frakTx}^0}
\nc{\frakTx}{\frakT}      %All rooted trees, correspond to \ncsha(X)
\nc{\frakTa}{\frakT^a}        % rooted trees for \ncsha(A)
\nc{\frakTxo}{\frakTx^0}   % rooted trees for \ncshao(X)
\nc{\caltao}{\calt^{a,0}}   % rooted trees for \ncshao(A)
\nc{\ox}{\overline{\frakx}} \nc{\fraky}{{\mathfrak y}}
\nc{\frakz}{{\mathfrak z}} \nc{\oX}{\overline{X}}

\font\cyr=wncyr10

%%%%%%%%%%%%%%%%%%%%%%%%%%%%%%%%%%%%%%%%%%%%%%%%%%%%%%%%%%%%%%%%%%

%\begin{document}
\title[]{On $q$-pre-Lie algebras}

\author{Chengyang Lu}
\address{School of Mathematics, Hangzhou Normal University,
Hangzhou 311121, PR China}
\email{2024111029011@stu.hznu.edu.cn}

\author{Yanyong Hong~~(corresponding author)}
\address{School of Mathematics, Hangzhou Normal University,
Hangzhou 311121, PR China}
\email{yyhong@hznu.edu.cn}

\subjclass[2010]{
17A30, % Nonassociative algebras satisfying other identities
17B65, %  Infinite-dimensional Lie (super)algebras
17A60, %  Structure theory for nonassociative algebras
17D25  % Lie-admissible algebras
}
\keywords{$q$-pre-Lie algebra, $q$-Novikov algebra, $q$-$\mathcal{O}$-operator, \delete{$q$-symmetric $2$-cocycle,} Witt algebra, Virasoro algebra.}

\begin{abstract}
   In this paper, we introduce the notion of $q$-pre-Lie algebras from the perspective of representations of Lie algebras, providing a parametrized generalization that unifies pre-Lie algebras and anti-pre-Lie algebras. For a $q$-pre-Lie algebra $(A,\circ)$, the commutator of $\circ$ is a Lie bracket and the left multiplication operator scaled by $q$ gives a representation of the associated commutator Lie algebra. We also introduce the notions of $q$-$\mathcal{O}$-operators and $q$-Novikov algebras, and investigate their relationships with $q$-pre-Lie algebras. Several explicit constructions of $q$-pre-Lie algebras are provided. Moreover, we give a complete classification of graded $q$-pre-Lie algebra structures on the Witt algebra and  prove the existence of such structures on the Virasoro algebra only when $q=1$ and $q=2$. Finally, we classify compatible root-graded $q$-pre-Lie algebra structures on finite-dimensional complex simple Lie algebras.
\end{abstract}

\maketitle

\vspace{-1.2cm}

\tableofcontents

\vspace{-1.2cm}

\allowdisplaybreaks

\section{Introduction}
The notion of pre-Lie algebras, also known as left-symmetric algebras or Vinberg algebras, arose from the study of affine structures on Lie groups \cite{Ko}, convex homogeneous cones \cite{V} and the deformation theory of associative algebras \cite{Ge}, and has since appeared in many fields of mathematics and physics, such as vertex algebras \cite{BK,BLP}. Recall that a {\bf pre-Lie algebra} is a pair $(A,\circ)$, where $A$ is a vector space equipped with a binary operation satisfying
\begin{eqnarray*}
    (a\circ b)\circ c-a\circ(b\circ c)=(b\circ a)\circ c-b\circ(a\circ c),\qquad a,b,c\in A.
\end{eqnarray*}
A key property of pre-Lie algebras is that the commutator $[a,b]:=a\circ b-b\circ a$ defines a Lie algebra structure on $A$, and the left multiplication operator $L_\circ$ gives a representation of the associated commutator Lie algebra.

In recent years, the study of variants and generalizations of pre-Lie algebras has attracted increasing attention. A particularly notable variant is that of {\bf anti-pre-Lie algebras} \cite{LB}, which are binary operations $\circ$ such that the commutator $[\cdot,\cdot]$ is a Lie bracket and the negative left multiplication operator $-L_\circ$ is a representation of the associated Lie algebra. This representation-theoretic characterization of anti-pre-Lie algebras stands in direct parallel to the classical property of pre-Lie algebras, where $L_\circ$ itself is a representation. Motivated by this parallel, it is natural to seek a unified framework that interpolates between these two structures.

Following this idea, we introduce the notion of {\bf $q$-pre-Lie algebras} (see Definition 2.1): a binary operation $\circ$ such that the commutator $[\cdot,\cdot]$ is a Lie bracket and the scaled left multiplication operator $qL_\circ$ is a representation of the associated Lie algebra. This parametrized framework unifies pre-Lie algebras (the case $q=1$) and anti-pre-Lie algebras (the case $q=-1$), and provides a systematic approach to studying Lie-admissible algebras with scaled representations. We note that our definition differs from $\delta$-pre-Lie algebras defined in \cite{Ka}. When $q\neq 0$ and $q\neq 1$, a $q$-pre-Lie algebra is just a $\frac{1}{q}$-pre-Lie algebra defined in \cite{Ka} and their $1$-pre-Lie algebra is not the usual pre-Lie algebra. From the representation-theoretic perspective, our definition arises naturally by requiring the scaled left multiplication to be a representation. This perspective also connects our work closely to the recent framework of generalized splitting of algebras developed in \cite{BGLZ}, where the unified treatment of $\mathcal{O}$-operators and their algebraic consequences plays a central role. Our definition can be viewed as a special case of their type-$\begin{pmatrix} q & 0 \\ 0 & q \end{pmatrix}$ pre-Lie algebras, and our subsequent development of $q$-$\mathcal{O}$-operators and their relationship with $q$-pre-Lie algebras fits naturally into their general splitting theory.

Our first main result establishes a fundamental characterization: for $q\neq 0$, $(A,\circ)$ is a $q$-pre-Lie algebra if and only if $(A,[\cdot,\cdot])$ is a Lie algebra and $(A,qL_\circ)$ is a representation of it. When $q=0$, only the forward implication holds, as the zero map is trivially a representation. This bridges the new notion with classical representation theory. Note that for a Lie algebra $(\mathfrak{g}, [\cdot,\cdot])$, there is a natural compatible $2$-pre-Lie algebra $(\mathfrak{g}, \circ)$ on $(\mathfrak{g}, [\cdot,\cdot])$ given by $a\circ b:=\frac{1}{2}[a,b]$ for all $a$, $b\in \mathfrak{g}$. We further provide several explicit constructions of $q$-pre-Lie algebras, including those arising from symmetric bilinear forms and from derivations and $q$-derivations of commutative associative algebras. We then introduce the notion of $q$-$\mathcal{O}$-operators on Lie algebras, which generalizes the classical $\mathcal{O}$-operators. A key result is that a $q$-$\mathcal{O}$-operator induces a $q$-pre-Lie algebra structure on the representation space if and only if it is strong. Moreover, we establish that when $q\neq 0$, a Lie algebra admits a compatible $q$-pre-Lie algebra structure if and only if it admits an invertible $q$-$\mathcal{O}$-operator.  In addition, we prove that the existence of a nondegenerate invariant bilinear form on a $q$-pre-Lie algebra is equivalent to the isomorphism of two special representations of its associated Lie algebra.

Furthermore, following the approach of \cite{LB}, we explore the relationship between $q$-pre-Lie algebras and $q$-Novikov algebras. For any $p$ such that $p^2-p+1\neq 0$, a certain subclass of $(1-p)$-pre-Lie algebras, namely the $(1-p)$-Novikov algebras, corresponds to Novikov algebras via the $p$-algebra construction \cite{Dz}. Novikov algebras, as a subclass of pre-Lie algebras, arose independently in the study of Hamiltonian operators in formal variational calculus \cite{GD, GD2} and Poisson brackets of hydrodynamic type \cite{BN}. For each $p$ satisfying $p^2-p+1\neq0$ and $p\neq -1,0,1$, the relationships among $(1-p)$-pre-Lie algebras, $(1-p)$-Novikov algebras, Novikov algebras and pre-Lie algebras are summarized as follows:
\begin{eqnarray*}
        \{\text{pre-Lie}\}\hookleftarrow\{\text{Novikov}\}\xrightleftharpoons{p\text{-algebra}}{(-p)\text{-algebra}}\{(1-p)\text{-Novikov} \}\hookrightarrow  \{(1-p)\text{-pre-Lie} \}.
    \end{eqnarray*}
Via the correspondence linking Novikov algebras to infinite-dimensional Lie algebras \cite{BN}, we obtain a correspondence between $(1-p)$-Novikov algebras and infinite-dimensional Lie algebras for $p\neq 0,1$. We also provide constructions of $q$-Novikov algebras from admissible pairs on commutative associative algebras, and consequently obtain new examples of $q$-pre-Lie algebras under suitable conditions.

Next, we turn to the classification of compatible graded $q$-pre-Lie algebra structures on the Witt algebra and the Virasoro algebra. Note that compatible pre-Lie algebra structures and anti-pre-Lie algebra structures on the Witt algebra and the Virasoro algebra have been classified in \cite{KCB} and \cite{BG} respectively. Following their methods  using the classification of indecomposable weight representations of the Witt algebra, we prove that there does not exist graded $0$-pre-Lie algebra structures on the Witt algebra, and when $q\neq 0$ and $q\neq 1$, every graded $q$-pre-Lie algebra structure on the Witt algebra is parametrized by a complex parameter $\lambda$ and is given explicitly by
\[
W_n\circ W_m=\frac{1}{q}(\lambda+m+(1-q)n)W_{m+n},\qquad m,n\in\mathbb{Z}.
\]
For the Virasoro algebra, in contrast to the pre-Lie case (i.e., $q=1$) studied in \cite{KCB} and the case $q=2$, \delete{where graded compatible structures do exist under natural conditions, }we prove that no such graded $q$-pre-Lie algebra structure exists when $q\neq 1,2$. This distinction highlights the essential role played by the scaling parameter $q$ in the representation condition.

Finally, we classify compatible root-graded $q$-pre-Lie algebra structures on finite-dimensional complex simple Lie algebras. In the case of $\mathfrak{sl}_2(\mathbb{C})$, we demonstrate that such structures exist if and only if $q=2$ or $q=-1$. For an arbitrary finite-dimensional complex simple Lie algebra $\mathfrak{g}$ other than $\mathfrak{sl}_2(\mathbb{C})$, the case $q=-1$ was previously investigated in \cite{BG2}, where it was proved that no compatible root-graded $-1$-pre-Lie algebra structure exists on $\mathfrak{g}$. For the remaining case $q=2$, we show that the only compatible structure is the trivial one given by $a\circ b=\frac{1}{2}[a,b]$ for any $a,b\in\mathfrak{g}$.
\delete{different from $\mathfrak{sl}_2(\mathbb{C})$, we assume the existence of a compatible root-graded $2$-pre-Lie algebra structure and derive a contradiction. The argument proceeds by embedding a suitable Lie algebra $\mathfrak{b}_n$, which contains an $n$-dimensional abelian subalgebra and a copy of $\mathfrak{sl}_2(\mathbb{C})$, and then analyzing its representation theory. We prove that no compatible root-graded $2$-pre-Lie algebra structure exists on any finite-dimensional complex simple Lie algebra other than $\mathfrak{sl}_2(\mathbb{C})$. Together with earlier results for anti-pre-Lie algebras (the case $q=-1$) obtained in \cite{BG2}, we show that $\mathfrak{sl}_2(\mathbb{C})$ is the only finite-dimensional complex simple Lie algebra admitting a compatible root-graded $q$-pre-Lie algebra structure for $q=2$ or $q=-1$.}

The paper is organized as follows. Section 2 introduces the basic definitions and properties of $q$-pre-Lie algebras, including their relations with $q$-$\mathcal{O}$-operators and $q$-Novikov algebras, and provides several explicit constructions. Section 3 is devoted to the classification of compatible graded $q$-pre-Lie algebra structures on the Witt algebra and the Virasoro algebra. Section 4 treats compatible root-graded $q$-pre-Lie algebra structures on finite-dimensional complex simple Lie algebras.\\

\noindent {\bf Notations.} Throughout this paper, let  $\bf k$ be a field of characteristic zero; in Sections 3 and 4, we take $\mathbf{k}=\mathbb{C}$, where $\mathbb{C}$ is the field of complex numbers. All vector spaces and algebras are over ${\bf k}$. All tensors over ${\bf k}$ are denoted by $\otimes$.  Let ${\mathbb{C}}^\times$ be the set of all nonzero elements of $\mathbb{C}$. Denote by $\mathbb{Z}$ and $\mathbb{Z}^\times$ the sets of integer numbers and non-zero integers respectively. We denote the identity map by $\id$. Let $A$ be a vector space with a binary operation $*$ and $a\in A$. Define a linear map $L_*(a)\in{\rm End}_{\bf k}(A)$ by
\begin{eqnarray*}
    L_*(a)(b)=a*b,\;\;b\in A.
\end{eqnarray*}

\section{Basic results of $q$-pre-Lie algebras and the relationships with $q$-Novikov algebras}
In this section, we develop the basic theory of $q$-pre-Lie algebras. We introduce the definition, establish their fundamental characterization in terms of representations of Lie algebras, and explore their connections with $q$-$\mathcal{O}$-operators and $q$-Novikov algebras. Several explicit constructions are also provided.

\subsection{$q$-pre-Lie algebras and some constructions}
\begin{defi}
Let $q\in {\bf k}$ and $A$ be a vector space with a binary operation $\circ: A\otimes A\rightarrow A$. $(A, \circ)$ is called a {\bf $q$-pre-Lie algebra} if the following equalities hold:
\begin{eqnarray}
\label{q-pre-Lie-1}&&q(a\circ (b\circ c)-b\circ (a\circ c))=[a,b]\circ c,\\
\label{q-pre-Lie-2}&&(q-1)([a,b]\circ c+[b,c]\circ a+[c,a]\circ b)=0,\;\;a, b, c\in A,
\end{eqnarray}
where $[a,b]:=a\circ b-b\circ a$.
\end{defi}
\begin{rmk}
\begin{itemize}
\item[(1)] When $q=1$, Eq. (\ref{q-pre-Lie-2}) naturally holds. Therefore, a $1$-pre-Lie algebra is just a pre-Lie algebra. When $q=0$, we have $[a,b]\circ c=0$ for all $a$, $b$, $c\in A$ and then Eq. (\ref{q-pre-Lie-2}) naturally holds.
\item[(2)] When $q\neq 1$, Eq. (\ref{q-pre-Lie-2}) is just
\begin{eqnarray}
[a,b]\circ c+[b,c]\circ a+[c,a]\circ b=0,\;\;a, b, c\in A.
\end{eqnarray}
In this case, for each $q\neq0$, the $\frac{1}{q}$-pre-Lie algebra reduces to that of \cite[Definition 13]{Ka}.
In particular, a $-1$-pre-Lie algebra is just an anti-pre-Lie algebra as defined in \cite{LB}.
\item[(3)] If $(A,\circ)$ is a $q$-pre-Lie algebra, one can show that $(A, [\cdot,\cdot])$ is a Lie algebra. Therefore $(A,\circ)$ is Lie-admissible and $(A, [\cdot,\cdot])$ is called the {\bf associated Lie algebra} of $(A,\circ)$ and $(A,\circ)$ is called the {\bf compatible $q$-pre-Lie algebra structure} on $(A, [\cdot,\cdot])$.
\item[(4)] When $q\neq 0,1$, the classification of $2$-dimensional $\frac{1}{q}$-pre-Lie algebras up to isomorphism has been given in \cite{Ka}. According to this classification, when $q\neq 0,-1$, there exist examples of $2$-dimensional simple $q$-pre-Lie algebras.
\item[(5)]
Every Lie algebra $(\mathfrak{g},[\cdot,\cdot])$ admits a compatible $2$-pre-Lie algebra structure given by $a\circ b=\frac{1}{2}[a,b]$ for any $a$, $b\in\mathfrak{g}$. We call such compatible $2$-pre-Lie algebra on $(\mathfrak{g},[\cdot,\cdot])$ \bf{trivial}.
\end{itemize}
\end{rmk}

\begin{pro}
    Let $A$ be a vector space with a binary operation $\circ:A\otimes A\to A$. Suppose that $q\neq 0$ and $\circ$ is commutative, that is, $a\circ b=b\circ a$ for all $a,b\in A.$ Then $(A,\circ)$ is a $q$-pre-Lie algebra if and only if $(A,\circ)$ is an associative algebra.
\end{pro}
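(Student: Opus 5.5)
If $\circ$ is commutative, then $[a,b]=a\circ b-b\circ a=0$ for all $a,b\in A$. Hence the right-hand side of Eq. (\ref{q-pre-Lie-1}) vanishes, and Eq. (\ref{q-pre-Lie-2}) holds trivially because every bracket in it is zero. The plan is therefore to reduce the definition of a $q$-pre-Lie algebra to a single identity and compare that identity with associativity, using commutativity throughout.

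First, with $q\neq 0$ we may divide Eq. (\ref{q-pre-Lie-1}) by $q$. So $(A,\circ)$ is a $q$-pre-Lie algebra if and only if
\begin{eqnarray*}
a\circ(b\circ c)=b\circ(a\circ c),\qquad a,b,c\in A.
\end{eqnarray*}
This is where the hypothesis $q\neq 0$ is used. For $q=0$ the condition would be empty, and the statement indeed fails, since any commutative operation would then qualify.

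Second, we show that for a commutative operation this identity is equivalent to associativity.

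Suppose $\circ$ is associative. Then
\begin{eqnarray*}
a\circ(b\circ c)=(a\circ b)\circ c=(b\circ a)\circ c=b\circ(a\circ c),
\end{eqnarray*}
so the identity holds.

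Conversely, suppose $a\circ(b\circ c)=b\circ(a\circ c)$ for all $a,b,c\in A$. Then
\begin{eqnarray*}
(a\circ b)\circ c=c\circ(a\circ b)=a\circ(c\circ b)=a\circ(b\circ c).
\end{eqnarray*}
The first equality uses commutativity, the second is the identity applied with the roles $(c,a,b)$, and the third uses commutativity again. Hence $\circ$ is associative.

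The only real care needed is in the converse direction: one must choose the order of the three variables so that a single application of the identity turns $(a\circ b)\circ c$ into $a\circ(b\circ c)$. This is a direct verification. Combining the two steps gives the stated equivalence.
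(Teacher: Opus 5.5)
Your proof is correct and takes essentially the same approach as the paper: since commutativity kills all brackets, Eq. (\ref{q-pre-Lie-2}) is automatic, and for $q\neq 0$ Eq. (\ref{q-pre-Lie-1}) reduces to $a\circ(b\circ c)=b\circ(a\circ c)$. The paper handles both directions at once by using commutativity to rewrite $q(a\circ(b\circ c)-b\circ(a\circ c))-[a,b]\circ c$ as $q\big((b\circ c)\circ a-b\circ(c\circ a)\big)$, which is $q$ times an associator, whereas you verify the two implications separately.
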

\begin{proof}
    It is clear that Eq. (\ref{q-pre-Lie-2}) holds. Furthermore, we have
    \begin{eqnarray*}
        q(a\circ(b\circ c)-b\circ(a\circ c))-[a,b]\circ c=q((b\circ c)\circ a-b\circ(c\circ a)),\;\;a,b,c\in A.
    \end{eqnarray*}
    So when $q\neq 0$, Eq. (\ref{q-pre-Lie-1}) holds if and only if $(A,\circ)$ is an associative algebra. This completes the proof.
\end{proof}
\begin{pro}\label{1/qLcirc}
If $(A, \circ)$ is a $q$-pre-Lie algebra, then $(A, qL_\circ)$ is a representation of the associated Lie algebra $(A, [\cdot,\cdot])$. Moreover, when $q\neq 0$,
$(A, \circ)$ is a $q$-pre-Lie algebra if and only if $(A, [\cdot,\cdot])$ is a Lie algebra and $(A, qL_\circ)$ is a representation of $(A, [\cdot,\cdot])$.
\end{pro}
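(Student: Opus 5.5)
The plan is to unpack the representation condition for $qL_\circ$ and compare it with Eq. (\ref{q-pre-Lie-1}). For a linear map $\rho=qL_\circ:A\to\End(A)$, being a representation of $(A,[\cdot,\cdot])$ means $\rho([a,b])=\rho(a)\rho(b)-\rho(b)\rho(a)$. Applied to $c\in A$, this reads
\begin{eqnarray*}
q\,[a,b]\circ c=q^2\big(a\circ(b\circ c)-b\circ(a\circ c)\big),\qquad a,b,c\in A.
\end{eqnarray*}
This is exactly $q$ times Eq. (\ref{q-pre-Lie-1}). So if Eq. (\ref{q-pre-Lie-1}) holds, $qL_\circ$ satisfies the representation identity. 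Conversely, when $q\neq0$, we may divide by $q$ and recover Eq. (\ref{q-pre-Lie-1}).

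The forward direction also requires that $(A,[\cdot,\cdot])$ be a Lie algebra, since Remark (3) only asserts this. Skew-symmetry is automatic, so the work is the Jacobi identity. I would write
\begin{eqnarray*}
J(a,b,c)=[[a,b],c]+[[b,c],a]+[[c,a],b].
\end{eqnarray*}
Expanding it gives
\begin{eqnarray*}
J(a,b,c)=\big([a,b]\circ c+[b,c]\circ a+[c,a]\circ b\big)-\big(c\circ[a,b]+a\circ[b,c]+b\circ[c,a]\big).
\end{eqnarray*}
Writing $\mathrm{as}(x,y,z)=x\circ(y\circ z)-y\circ(x\circ z)$, the second group rearranges into the cyclic sum $\sum_{\rm cyc}\mathrm{as}(a,b,c)$. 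Eq. (\ref{q-pre-Lie-1}) turns this into $\frac{1}{q}S$ when $q\neq0$, where $S$ is the cyclic sum $[a,b]\circ c+[b,c]\circ a+[c,a]\circ b$. Hence
\begin{eqnarray*}
J=\Big(1-\tfrac{1}{q}\Big)S=\tfrac{q-1}{q}\,S.
\end{eqnarray*}
This vanishes by Eq. (\ref{q-pre-Lie-2}). More uniformly, $qJ=qS-q\sum_{\rm cyc}\mathrm{as}=qS-S=(q-1)S=0$, which settles every $q\neq0$.

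The case $q=0$ is the step I expect to be the main obstacle. Here Eq. (\ref{q-pre-Lie-1}) gives $[a,b]\circ c=0$, so $S=0$ and $J=-\sum_{\rm cyc}a\circ[b,c]$, and the Jacobi identity must come from something else. The representation claim is harmless when $q=0$, because $qL_\circ=0$ is a representation of any bracket. I would therefore treat the representation statement as the real content. If the Jacobi identity is genuinely needed at $q=0$, I would try to show that $a\circ[b,c]$ cyclically sums to zero using $[a,b]\circ c=0$; if that fails, I would rely on Remark (3) as stated in the paper.

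Combining the identity in the first paragraph with the Jacobi verification gives both the forward statement and, for $q\neq0$, the equivalence. In the converse direction, Eq. (\ref{q-pre-Lie-2}) follows from the Lie hypothesis by reversing the computation: $(q-1)S=qJ=0$.
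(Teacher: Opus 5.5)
Your argument for $q\neq 0$ is correct and is the paper's proof. The representation identity for $qL_\circ$ is $q$ times Eq.~(\ref{q-pre-Lie-1}). Your identity $qJ=(q-1)S$ is exactly the paper's computation $q([a,[b,c]]-[[a,b],c]-[b,[a,c]])=(1-q)\big([a,b]\circ c+[b,c]\circ a+[c,a]\circ b\big)$, since the left side there is $-qJ$. You use it correctly in both directions of the equivalence.

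Your doubt about $q=0$ is justified, but neither of your fallbacks works. Showing $\sum_{\rm cyc}a\circ[b,c]=0$ from $[a,b]\circ c=0$ is impossible, and relying on Remark (3) would mean relying on a false claim.

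Here is a counterexample. Take $A$ with basis $x,y,z,d$ and only nonzero products $x\circ y=d$ and $z\circ d=d$. Every commutator lies in ${\bf k}d$ and $d\circ A=0$, so $[a,b]\circ c=0$ for all $a,b,c$. Hence $(A,\circ)$ is a $0$-pre-Lie algebra. Yet
$[[x,y],z]+[[y,z],x]+[[z,x],y]=[d,z]=-z\circ d=-d\neq 0$.

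So at $q=0$ the commutator need not be a Lie bracket. The first sentence of the proposition then holds only in the degenerate sense that $\rho=0$ satisfies $\rho([a,b])=\rho(a)\rho(b)-\rho(b)\rho(a)$. The paper's proof has the same hole: it takes the Lie property from Remark (3), and its Jacobi computation only yields $qJ=0$. This is a defect of the statement at $q=0$, not a gap in your $q\neq0$ argument.
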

\begin{proof}
By Eq. (\ref{q-pre-Lie-1}), we have $qL_\circ(a)qL_\circ(b)c-qL_\circ(b)qL_\circ(a)c=qL_\circ([a,b])c$. Therefore, $(A, qL_\circ)$ is a representation of the associated Lie algebra $(A, [\cdot,\cdot])$.

Suppose that $q\neq 0$. Note that $qL_\circ(a)qL_\circ(b)c-qL_\circ(b)qL_\circ(a)c=qL_\circ([a,b])c$ is equivalent to Eq. (\ref{q-pre-Lie-1}). Moreover, by Eq. (\ref{q-pre-Lie-1}), we obtain
\begin{eqnarray*}
&&q([a,[b,c]]-[[a,b],c]-[b,[a,c]])\\
&=&qa\circ(b\circ c-c\circ b)-q(b\circ c-c\circ b)\circ a-q(a\circ b-b\circ a)\circ c\\
&&+qc\circ (a\circ b-b\circ a)-qb\circ (a\circ c-c\circ a)+q(a\circ c-c\circ a)\circ b\\
&=&(1-q)([a,b]\circ c+[b,c]\circ a+[c,a]\circ b).
\end{eqnarray*}
Then this conclusion follows directly.
\end{proof}
\begin{rmk}
By Proposition \ref{1/qLcirc}, $(A, \circ)$ is a type-$\left(
                                                                         \begin{array}{cc}
                                                                           q & 0 \\
                                                                           0 & q \\
                                                                         \end{array}
                                                                       \right)$
pre-Lie algebra in the sense of \cite{BGLZ}.
\end{rmk}

\begin{defi}
    A bilinear form $\mathcal{B}(\cdot,\cdot)$ on a $q$-pre-Lie algebra $(A,\circ)$ is called {\bf invariant} if
    \begin{eqnarray*}
q\mathcal{B}(a\circ b,c)=-\mathcal{B}(b,[a,c]),\;\; a, b, c\in A.
    \end{eqnarray*}
\end{defi}
\delete{\begin{rmk}
    Note that if a bilinear form $\mathcal{B}(\cdot,\cdot): V\times V\rightarrow {\bf k}$ is $q$-symmetric, then $q=1$ or $-1$. Therefore, if we need to study the relationship between $q$-pre-Lie algebras and $q$-symmetric $2$-cocycles on Lie algebras, it suffices to reduce to the above two special cases. For the cases $q=1$ and $q=-1$, see \cite{Ch} and \cite{LB} respectively.
\end{rmk}}

\delete{\begin{cor}\label{cor-invariant}
    Any $q$-symmetric invariant bilinear form on a $q$-pre-Lie algebra $(A,\circ)$ is a $q$-symmetric $2$-cocycle on the associated Lie algebra $(A,[\cdot,\cdot])$. Conversely, a nondegenerate $q$-symmetric $2$-cocycle on a Lie algebra $(\mathfrak{g},[\cdot,\cdot])$ is invariant on the compatible $q$-pre-Lie algebra given by Eq. (\ref{invariant}).
\end{cor}
\begin{proof}
    Firstly, by Eq. (\ref{invariant}), we get
    \begin{eqnarray*}
        \mathcal{B}([a,b],c)&=&\mathcal{B}(a\circ b,c)-\mathcal{B}(b\circ a,c)=-q\mathcal{B}(b,[a,c])+q\mathcal{B}(a,[b,c])\\
        &=& \mathcal{B}([a,c],b)-\mathcal{B}([b,c],a),\;\;a,b,c\in A.
    \end{eqnarray*}
    Hence $\mathcal{B}(\cdot,\cdot)$ is a $q$-symmetric $2$-cocycle on the associated Lie algebra $(A,[\cdot,\cdot])$. The converse part follows from Theorem \ref{compatible q-pre-Lie}.
\end{proof}}

Recall that two representations $(V_1,\rho_1)$ and $(V_2,\rho_2)$ of a Lie algebra $(\mathfrak{g},[\cdot,\cdot])$ are called {\bf isomorphic} if there exists a linear isomorphism $\varphi:V_1\to V_2$ such that $\varphi(\rho_1(a)v)=\rho_2(a)\varphi(v)$ for all $a\in \mathfrak{g}$ and $v\in V_1$.

\begin{pro}\label{repequi}
    Let $(A,\circ)$ be a finite-dimensional $q$-pre-Lie algebra. Then there is a nondegenerate invariant bilinear form on $(A,\circ)$ if and only if $(A,qL_\circ)$ and $(A^*,{\rm ad}^*)$ are isomorphic as representations of the associated Lie algebra $(A,[\cdot,\cdot])$.
\end{pro}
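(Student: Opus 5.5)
The plan is to use the standard dictionary between nondegenerate bilinear forms on $A$ and linear isomorphisms $A\to A^*$. Given a bilinear form $\mathcal{B}$ on $A$, define $\varphi:A\to A^*$ by
\begin{eqnarray*}
\langle \varphi(a), b\rangle := \mathcal{B}(a,b),\;\;a,b\in A.
\end{eqnarray*}
Conversely, any linear map $\varphi:A\to A^*$ defines a bilinear form by the same formula. Since $A$ is finite-dimensional, $\mathcal{B}$ is nondegenerate if and only if $\varphi$ is a linear isomorphism. The proof then reduces to showing that invariance of $\mathcal{B}$ is exactly the intertwining condition
\begin{eqnarray*}
\varphi(qL_\circ(a)b)={\rm ad}^*(a)\varphi(b),\;\;a,b\in A.
\end{eqnarray*}

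We use the coadjoint representation convention $\langle {\rm ad}^*(a)\xi, c\rangle=-\langle \xi,[a,c]\rangle$ for $\xi\in A^*$ and $a,c\in A$. This is a representation of $(A,[\cdot,\cdot])$, because $(A,[\cdot,\cdot])$ is a Lie algebra by the Remark following the definition of $q$-pre-Lie algebras. By Proposition \ref{1/qLcirc}, $(A,qL_\circ)$ is also a representation. Pairing both sides of the intertwining condition with an arbitrary $c\in A$, the left side gives
\begin{eqnarray*}
\langle\varphi(q\,a\circ b),c\rangle=q\mathcal{B}(a\circ b,c),
\end{eqnarray*}
and the right side gives
\begin{eqnarray*}
\langle{\rm ad}^*(a)\varphi(b),c\rangle=-\langle\varphi(b),[a,c]\rangle=-\mathcal{B}(b,[a,c]).
\end{eqnarray*}
Since $c$ is arbitrary, the intertwining condition holds if and only if $q\mathcal{B}(a\circ b,c)=-\mathcal{B}(b,[a,c])$ for all $a,b,c$, which is precisely invariance.

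Both directions then follow at once. If $\mathcal{B}$ is a nondegenerate invariant form, $\varphi$ is an isomorphism of representations. Conversely, if $\varphi:(A,qL_\circ)\to(A^*,{\rm ad}^*)$ is an isomorphism, the form $\mathcal{B}(a,b):=\langle\varphi(a),b\rangle$ is nondegenerate and invariant.

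The only real obstacle is bookkeeping: the slot of $\mathcal{B}$ used to define $\varphi$ must match the slot appearing in the invariance condition. The first argument is the right choice, since $a\circ b$ sits in the first slot and $b$ in the first slot on the right-hand side. The coadjoint sign convention must also be fixed consistently. No symmetry of $\mathcal{B}$ is needed, and the case $q=0$ requires no special treatment.
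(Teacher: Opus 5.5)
Your proposal is correct and is essentially the paper's proof. Both identify $\mathcal{B}$ with $\varphi:A\to A^*$ via $\mathcal{B}(a,b)=\langle\varphi(a),b\rangle$, and note that pairing $\varphi(qL_\circ(a)b)={\rm ad}^*(a)\varphi(b)$ with an arbitrary $c$ gives exactly $q\mathcal{B}(a\circ b,c)=-\mathcal{B}(b,[a,c])$. Your single-equivalence phrasing yields both directions at once, where the paper writes out one and calls the other similar.

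A minor aside on your citation of the Remark for the Jacobi identity: this is only safe for $q\neq 0$. For $q=0$ the axioms reduce to $[A,A]\circ A=0$, which does not force Jacobi. Your argument never uses Jacobi, though, and for $q=0$ either side of the equivalence forces $[\cdot,\cdot]=0$ anyway.
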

\begin{proof}
    Suppose that $\varphi:A\to A^*$ is the linear isomorphism satisfying
    \begin{eqnarray*}
        \varphi(qL_\circ(a)b)={\rm ad}^*(a)\varphi(b),\;\;a,b\in A.
    \end{eqnarray*}
    Define a nondegenerate bilinear form $\mathcal{B}(\cdot,\cdot)$ on $A$ by
    \begin{eqnarray}\label{biformphi}
        \mathcal{B}(a,b)=\langle\varphi(a),b\rangle,\;\;a,b\in A.
    \end{eqnarray}
    Then we obtain
    \begin{eqnarray*}
        q\mathcal{B}(a\circ b,c)+\mathcal{B}(b,[a,c])&=&q\langle \varphi(a\circ b),c \rangle+\langle \varphi(b),[a,c] \rangle=q\langle \varphi(L_\circ(a)b),c \rangle+\langle \varphi(b),{\rm ad}(a)c \rangle\\
        &=& \langle {\rm ad}^*(a)\varphi(b),c\rangle+\langle \varphi(b),{\rm ad}(a)c\rangle =0,\;\;a,b,c\in A.
    \end{eqnarray*}
    Hence $\mathcal{B}(\cdot,\cdot)$ is invariant on $(A,\circ)$.

    Conversely, suppose that $\mathcal{B}(\cdot,\cdot)$ is a nondegenerate invariant bilinear form on $(A,\circ)$. Define a linear map $\varphi:A\to A^*$ by Eq. (\ref{biformphi}). Then, by a similar argument, we can prove that $\varphi$ gives the isomorphism between $(A,qL_\circ)$ and $(A^*,{\rm ad}^*)$ as representations of $(A,[\cdot,\cdot])$.
\end{proof}

\delete{Let $(\mathfrak{g}, [\cdot,\cdot])$ be a Lie algebra and $(V, \rho)$ be a representation of $(\mathfrak{g}, [\cdot,\cdot])$. Recall \cite{Ku} that  a linear map $T: V\rightarrow \mathfrak{g}$ is called an {\bf $\mathcal{O}$-operator} on $(\mathfrak{g}, [\cdot,\cdot])$ associated to $(V, \rho)$ if $T$ satisfies
\begin{eqnarray}
[T(u), T(v)]=T(\rho(T(u))v-\rho(T(v))u),\;\;u, v\in V.
\end{eqnarray}

\begin{lem}\label{lem-se1}
Let $(A, \circ)$ be a $q$-pre-Lie algebra. Then $\id$ is an $\mathcal{O}$-operator on the associated Lie algebra $(A, [\cdot,\cdot])$ associated to $(A, qL_\circ)$.
\end{lem}
\begin{proof}
It is straightforward.
\end{proof}

Let $(\mathfrak{g},[\cdot,\cdot])$ be a Lie algebra and $r=\sum_i a_i\otimes b_i\in\mathfrak{g}\otimes \mathfrak{g}$. Recall that $r$ is called a {\bf solution of classical Yang-Baxter equation (CYBE)} in $\mathfrak{g}$ if
    \begin{eqnarray*}
        [r_{12},r_{13}]+[r_{12},r_{23}]+[r_{13},r_{23}]=0
    \end{eqnarray*}
    in the universal enveloping algebra of $\mathfrak{g}$, where
    \begin{eqnarray*}
        r_{12}=\sum_i a_i\otimes b_i\otimes 1,\;\;r_{13}=\sum_i a_i\otimes 1\otimes b_i,\;\;r_{23}=\sum_i 1\otimes a_i\otimes b_i.
    \end{eqnarray*}

Recall that $(V,\rho)$ is a representation of a Lie algebra $(\mathfrak{g},[\cdot,\cdot])$ if and only if there is a Lie algebra structure on the direct sum $\mathfrak{g}\oplus V$ of vector spaces, where the binary operation $[\cdot,\cdot]$ is defined by
\begin{eqnarray*}
    [a+u,b+v]=[a,b]+\rho(a)v-\rho(b)u,\;\;a,b\in\mathfrak{g},\;u,v\in V.
\end{eqnarray*}
We denote it by $\mathfrak{g}\ltimes_\rho V$.

\begin{lem}\label{lem-se2}\cite{Bai}
Let $(\mathfrak{g}, [\cdot,\cdot])$ be a Lie algebra and $(V, \rho)$ be a representation of $(\mathfrak{g}, [\cdot,\cdot])$. Let $T: V\rightarrow A$ be a linear map which is
identified with $r_T\in \mathfrak{g}\otimes V^\ast \subseteq
(\mathfrak{g}\ltimes_{ \rho^\ast} V^\ast) \otimes
(\mathfrak{g}\ltimes_{\rho^\ast} V^\ast)$ through the natural isomorphism
 ${\rm Hom}_{\bf k}(V, \mathfrak{g})\cong \mathfrak{g}\otimes V^\ast$.
Then $r=r_T-\tau r_T$ is a solution of the CYBE in the
$\mathfrak{g}\ltimes_{\rho^\ast} V^\ast$
if and only if $T$ is an
$\mathcal{O}$-operator on $(\mathfrak{g}, [\cdot,\cdot])$ associated to $(V, \rho)$.
\end{lem}

\begin{pro}
    Let $(A,\circ)$ be a $q$-pre-Lie algebra of dimension $n$ and $(A,[\cdot,\cdot])$ the associated Lie algebra of $(A,\circ)$. Then
    \begin{eqnarray*}
        r=\sum_{i=1}^n(e_i\otimes e_i^*-e_i^*\otimes e_i)
    \end{eqnarray*}
    is a solution of CYBE in the Lie algebra $A\ltimes_{qL_\circ^*}A^*$, where $\{e_1,e_2,\ldots,e_n\}$ is a basis of $A$ and $\{e_1^*,e_2^*,\ldots,e_n^*\}$ is the dual basis.
\end{pro}
\begin{proof}
It follows directly from Lemmas \ref{lem-se1} and \ref{lem-se2}.
\end{proof}}

Next, we present some examples of $q$-pre-Lie algebras.
\begin{pro}\label{prop-fg}
    Let $A$ be a vector space with $\dim A\geq2$, and $f,g:A\to {\bf k}$ be two linear functions. Define a binary operation $\circ:A\otimes A\to A$ by
    \begin{eqnarray}\label{fgdef}
        a\circ b:=f(b)a+g(a)b,\;\;a,b\in A.
    \end{eqnarray}
    Then $(A,\circ)$ is a $q$-pre-Lie algebra if and only if $f=0$ or $(q-1)f+g=0$.
\end{pro}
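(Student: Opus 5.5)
The plan is a direct computation, organised around the two auxiliary functionals
\[
h:=f-g,\qquad k:=(q-1)f+g=qf-h .
\]

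\textbf{Step 1: the commutator.} From Eq. (\ref{fgdef}) we get
\[
[a,b]=h(b)a-h(a)b .
\]

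\textbf{Step 2: Eq. (\ref{q-pre-Lie-1}).} Expanding,
\[
a\circ(b\circ c)=f(b)f(c)a+f(c)g(a)b+g(b)f(c)a+g(a)g(b)c .
\]
After antisymmetrising in $a,b$, all terms cancel except
\[
a\circ(b\circ c)-b\circ(a\circ c)=f(c)\bigl(f(b)a-f(a)b\bigr).
\]
For the right-hand side,
\[
[a,b]\circ c=f(c)\bigl(h(b)a-h(a)b\bigr)+\bigl(f(b)g(a)-f(a)g(b)\bigr)c .
\]
Here we used $h(b)g(a)-h(a)g(b)=f(b)g(a)-f(a)g(b)$. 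Hence Eq. (\ref{q-pre-Lie-1}) is equivalent to
\begin{equation*}
f(c)\bigl(k(b)a-k(a)b\bigr)-\bigl(f(b)g(a)-f(a)g(b)\bigr)c=0,\qquad a,b,c\in A. \tag{$\ast$}
\end{equation*}

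\textbf{Step 3: Eq. (\ref{q-pre-Lie-2}) holds automatically.} Take the cyclic sum of the expression for $[a,b]\circ c$. The coefficient of $a$ collects $f(c)h(b)-f(b)h(c)$ from the first part and $f(c)g(b)-f(b)g(c)$ from the second part. Since $h+g=f$, the total is $f(c)f(b)-f(b)f(c)=0$. By symmetry the coefficients of $b$ and $c$ also vanish, so the cyclic sum is zero. Therefore $(A,\circ)$ is a $q$-pre-Lie algebra if and only if $(\ast)$ holds.

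\textbf{Step 4: sufficiency.} If $f=0$, both terms of $(\ast)$ vanish. If $k=0$, then $g=-(q-1)f$, so $f(b)g(a)-f(a)g(b)=0$ and the first term of $(\ast)$ is zero as well.

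\textbf{Step 5: necessity.} This is the only step needing a little care, and it is where $\dim A\geq 2$ enters. Assume $(\ast)$ holds and $f\neq 0$.

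First we show $f$ and $g$ are linearly dependent. Suppose not. Then there are vectors $a,b$ with $f(a)=1$, $f(b)=0$, $g(a)=0$ and $g(b)=1$. Put $c=b$ in $(\ast)$. The first term vanishes because $f(b)=0$. The second term equals $-(0\cdot 0-1\cdot 1)b=b\neq 0$, which is a contradiction.

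So $f$ and $g$ are dependent, and therefore $f(b)g(a)-f(a)g(b)=0$ for all $a,b$. Now choose $c$ with $f(c)\neq 0$. Then $(\ast)$ reduces to
\[
k(b)a=k(a)b\qquad\text{for all } a,b\in A .
\]
Suppose $k\neq 0$ and pick $a$ with $k(a)\neq 0$. Since $\dim A\geq 2$, we can pick $b$ linearly independent of $a$. Then $k(b)a=k(a)b$ is impossible. Hence $k=0$, that is, $(q-1)f+g=0$, which proves the statement.
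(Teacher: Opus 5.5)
Your proof is correct. Up to the reduction to $(\ast)$ and the sufficiency check it coincides with the paper's. The paper derives the same condition up to an overall sign, and it verifies Eq.~(\ref{q-pre-Lie-2}) by full expansion, where you use $h+g=f$.

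The necessity argument is where you differ. The paper splits by dimension:
\begin{itemize}
\item For $\dim A\geq 3$ it picks $b$ linearly independent of $a$ and $c$. This isolates the coefficient $((q-1)f+g)(a)f(c)$ of $b$, giving $f=0$ or $(q-1)f+g=0$ at once.
\item For $\dim A=2$ it writes out four scalar equations in a basis $\{e_1,e_2\}$ and does a case analysis on whether $f(e_1)=0$.
\end{itemize}

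You instead first remove the term $(f(b)g(a)-f(a)g(b))c$ by showing that $f$ and $g$ are proportional. For this you take $c=b$, with $a,b$ chosen so that $f(a)=g(b)=1$ and $f(b)=g(a)=0$. That choice uses the fact that $x\mapsto (f(x),g(x))$ maps onto ${\bf k}^2$ when $f,g$ are independent, which is worth stating explicitly. After that, one argument finishes uniformly for every $\dim A\geq 2$.

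Your route is coordinate-free and avoids the separate two-dimensional computation. In that computation the paper's text contains a slip: it concludes $(1-q)f+qg=0$ where $(q-1)f+g=0$ is meant. Your route also makes clear that $\dim A\geq 2$ enters only in the final step. This hypothesis is sharp: in dimension one, $(\ast)$ holds for every pair $f,g$. The paper's $\dim A\geq 3$ case is a slightly quicker one-liner, but by itself it cannot handle dimension two.
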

\begin{proof}
    For all $a,b,c\in A$, we get
    \begin{eqnarray*}
        &&(q-1)([a,b]\circ c+[b,c]\circ a+[c,a]\circ b)  \\
        &=&(q-1)(f(c)[a,b]+g([a,b])c+f(a)[b,c]+g([b,c])a+f(b)[c,a]+g([c,a])b)  \\
        &=&(q-1)\big(f(c)(f(b)a+g(a)b-f(a)b-g(b)a)+g(f(b)a+g(a)b-f(a)b-g(b)a)c  \\
        &&+f(a)(f(c)b+g(b)c-f(b)c-g(c)b)+g(f(c)b+g(b)c-f(b)c-g(c)b)a  \\
        &&+f(b)(f(a)c+g(c)a-f(c)a-g(a)c)+g(f(a)c+g(c)a-f(c)a-g(a)c)b \big) \\
        &=&0.
    \end{eqnarray*}
    Then Eq. (\ref{q-pre-Lie-2}) holds. Furthermore, Eq. (\ref{q-pre-Lie-1}) holds if and only if
    \begin{eqnarray}
        \label{Eq-iff}
        ((q-1)f+g)(a)f(c)b+(f(b)g(a)-f(a)g(b))c-((q-1)f+g)(b)f(c)a=0
    \end{eqnarray}
    for all $a,b,c\in A$.

    {\bf Case 1:} Suppose that $\dim A\geq3$. Then for any $a,c\in A$, there exists an element $b\in A$ such that $b$ is linearly independent of $a$ and $c$. So by Eq. (\ref{Eq-iff}), we obtain $((q-1)f+g)(a)f(c)=0$ for all $a,c\in A$. Hence $f=0$ or $(q-1)f+g=0$.

    {\bf Case 2:} Suppose that $\dim A=2$. Let $\{e_1,e_2 \}$ be a basis of $A$. Then Eq. (\ref{Eq-iff}) holds if and only if
    \begin{eqnarray}
        \label{Eq-iff1}
        &((q-1)f+g)(e_1)f(e_1)=0, &\\
        \label{Eq-iff2}
        &((q-1)f+g)(e_2)f(e_2)=0, &\\
        \label{Eq-iff3}
        &-f(e_1)((q-1)f+2g)(e_2)+f(e_2)g(e_1)=0, &\\
        \label{Eq-iff4}
        &-f(e_2)((q-1)f+2g)(e_1)+f(e_1)g(e_2)=0. &
    \end{eqnarray}
    \begin{enumerate}
        \item[(1)] Assume that $f(e_1)\neq0$, then by Eqs. (\ref{Eq-iff1}) and (\ref{Eq-iff4}), we get $((q-1)f+g)(e_1)=0$ and $((q-1)f+g)(e_2)=0$. Hence $(q-1)f+g=0$.
        \item[(2)] Assume that $f(e_1)=0$, then by Eq. (\ref{Eq-iff4}), we get $f(e_2)g(e_1)=0$. If $f(e_2)=0$, then $f=0$. If $f(e_2)\neq0$, then $g(e_1)=0$. By Eq. (\ref{Eq-iff2}) we obtain $((q-1)f+g)(e_2)=0$. Hence $(1-q)f+qg=0$.
    \end{enumerate}

    It is clear that Eq. (\ref{Eq-iff}) holds when $f=0$ or $(q-1)f+g=0$. This completes the proof.
\end{proof}
\begin{cor}\label{fgdef-cor}
    With the conditions in Proposition \ref{prop-fg}, we have the following conclusions.
    \begin{enumerate}
        \item[(1)] If $f=0$ and $g\neq0$, then there is a basis $\{e_1,\ldots,e_n \}$ in $A$ such that the nonzero products are given by
        \begin{eqnarray*}
            e_1\circ e_i=e_i,\;\;i=1,\ldots,n.
        \end{eqnarray*}
        \item[(2)] If $f\neq0$ and $(q-1)f+g=0$, then there is a basis $\{e_1,\ldots,e_n \}$ in $A$ such that the nonzero products are given by
        \begin{eqnarray*}
            e_1\circ e_1=(2-q)e_1,\;\;e_1\circ e_i=(1-q)e_i,\;\;e_i\circ e_1=e_i,\;\;i=2,\ldots,n.
        \end{eqnarray*}
        \item[(3)] If $f=g=0$, then $A$ is trivial.
    \end{enumerate}
\end{cor}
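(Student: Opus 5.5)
The plan is to choose a basis adapted to the linear functionals in each of the three cases and then read off the products from $a\circ b=f(b)a+g(a)b$.

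\emph{Case (3).} If $f=g=0$, then $a\circ b=0$ for all $a,b$, so there is nothing to prove.

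\emph{Case (1).} Here $f=0$ and $g\neq0$, so $a\circ b=g(a)b$. Since $g\neq 0$, pick $e_1$ with $g(e_1)=1$. Since $\ker g$ has codimension one, choose a basis $e_2,\ldots,e_n$ of $\ker g$; then $\{e_1,\ldots,e_n\}$ is a basis of $A$. The products are $e_1\circ e_i=g(e_1)e_i=e_i$ and $e_i\circ e_j=0$ for $i\geq2$. This is exactly the stated table.

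\emph{Case (2).} Here $f\neq0$ and $g=(1-q)f$, so $a\circ b=f(b)a+(1-q)f(a)b$. Choose $e_1$ with $f(e_1)=1$, and let $e_2,\ldots,e_n$ be a basis of $\ker f$; together these form a basis. Evaluating the products:
\begin{itemize}
\item $e_1\circ e_1=e_1+(1-q)e_1=(2-q)e_1$;
\item for $i\geq2$, $e_1\circ e_i=f(e_i)e_1+(1-q)f(e_1)e_i=(1-q)e_i$;
\item for $i\geq2$, $e_i\circ e_1=f(e_1)e_i+(1-q)f(e_i)e_1=e_i$;
\item for $i,j\geq 2$, $e_i\circ e_j=0$, since both $f$ and $g$ vanish on $\ker f$.
\end{itemize}
These are precisely the nonzero products listed in the statement.

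There is no real obstacle: once $e_1$ is normalized so that the relevant functional equals $1$ on it, and the remaining basis vectors are taken in that functional's kernel, each case reduces to direct substitution.
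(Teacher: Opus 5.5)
Your proposal is correct and follows the paper's proof: choose $e_1$ with the relevant functional ($g$ in Case (1), $f$ in Case (2)) equal to $1$ on it, take $e_2,\ldots,e_n$ as a basis of its kernel, and substitute into $a\circ b=f(b)a+g(a)b$. Your product computations in each case check out.
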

\begin{proof}
    For any nonzero linear function $h:A\to {\bf k}$, since $A={\rm Ker}~h\oplus h(A)={\rm Ker}~h\oplus {\bf k}$, there is a basis $\{e_1,\ldots,e_n \}$ of $A$ such that $h(e_1)=1$ and $h(e_i)=0$ for all $i=2,\ldots,n$. Thus Case (1) follows by taking $g(e_1)=1$ and $g(e_i)=0$ for all $i=2,\ldots,n$ and Case (2) follows by taking $f(e_1)=1$ and $f(e_i)=0$ for all $i=2,\ldots,n$. Case (3) is clear.
\end{proof}

Finally, we present several constructions of $q$-pre-Lie algebras.

For a given $q\in{\bf k}$, recall \cite{Fi} that a {\bf $q$-derivation} of a commutative associative algebra $(A,\cdot)$ is a linear map $\varphi:A\to A$ satisfying
\begin{eqnarray*}
    \varphi(a\cdot b)=q(\varphi(a)\cdot b+a\cdot\varphi(b)),\;\;a,b\in A.
\end{eqnarray*}
\delete{If $q=1$ (resp. $q=-1$), then we get a derivation (resp. anti-derivation) of $(A,\cdot)$.} There is a natural construction of $q$-pre-Lie algebras from commutative associative algebras with a $q$-derivation.\delete{ As a direct consequence of \cite[Example 5]{Ka} and \cite{GD}, we have the following construction of $q$-pre-Lie algebras.}
\begin{pro}\cite[Example 5]{Ka}
    For each $q\neq0$, let $\varphi$ be a $\frac{1}{q}$-derivation of a commutative associative algebra $(A,\cdot)$. Define a binary operation $\circ$ on $A$ by
    \begin{eqnarray*}
        a\circ b:=a\cdot\varphi(b),\;\;a,b\in A.
    \end{eqnarray*}
    Then $(A,\circ)$ is a $q$-pre-Lie algebra.
\end{pro}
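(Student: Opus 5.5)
Write $\varphi$ for the $\frac1q$-derivation, so that $\varphi(a\cdot b)=\frac1q(\varphi(a)\cdot b+a\cdot\varphi(b))$, that is, $q\varphi(a\cdot b)=\varphi(a)\cdot b+a\cdot\varphi(b)$. The plan is to verify Eqs. (\ref{q-pre-Lie-1}) and (\ref{q-pre-Lie-2}) directly from the definition.

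First, the commutator is $[a,b]=a\cdot\varphi(b)-b\cdot\varphi(a)$, so
\[ [a,b]\circ c=a\cdot\varphi(b)\cdot\varphi(c)-b\cdot\varphi(a)\cdot\varphi(c). \]

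\textbf{Eq. (\ref{q-pre-Lie-1}).} Using the $\frac1q$-derivation property,
\[ q\,a\circ(b\circ c)=a\cdot q\varphi(b\cdot\varphi(c))=a\cdot\varphi(b)\cdot\varphi(c)+a\cdot b\cdot\varphi^2(c). \]
Exchanging $a$ and $b$ and subtracting, the term $a\cdot b\cdot\varphi^2(c)$ cancels by commutativity and associativity of $\cdot$. What remains is $a\cdot\varphi(b)\cdot\varphi(c)-b\cdot\varphi(a)\cdot\varphi(c)=[a,b]\circ c$, which is Eq. (\ref{q-pre-Lie-1}).

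\textbf{Eq. (\ref{q-pre-Lie-2}).} It suffices to show that the cyclic sum $[a,b]\circ c+[b,c]\circ a+[c,a]\circ b$ vanishes. Each summand has the form $x\cdot\varphi(y)\cdot\varphi(z)-y\cdot\varphi(x)\cdot\varphi(z)$, giving six monomials of type $u\cdot\varphi(v)\cdot\varphi(w)$. These are symmetric in $v,w$ by commutativity, so each is determined by the undifferentiated variable $u$:
\begin{itemize}
\item $u=a$: the term $+a\varphi(b)\varphi(c)$ comes from $[a,b]\circ c$, and $-a\varphi(c)\varphi(b)$ comes from $[c,a]\circ b$;
\item $u=b$: the term $-b\varphi(a)\varphi(c)$ comes from $[a,b]\circ c$, and $+b\varphi(c)\varphi(a)$ comes from $[b,c]\circ a$;
\item $u=c$: the remaining two terms, from $[b,c]\circ a$ and $[c,a]\circ b$, likewise have opposite signs.
\end{itemize}
So the six monomials cancel in pairs and the sum is zero.

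The only step needing care is the sign bookkeeping in this cyclic-sum cancellation; everything else is a direct application of the derivation rule. Note that $q\neq0$ is needed only for $\frac1q$ to make sense.
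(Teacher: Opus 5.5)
Your proof is correct. Expanding $q\,a\circ(b\circ c)=a\cdot\varphi(b)\cdot\varphi(c)+a\cdot b\cdot\varphi^2(c)$ and antisymmetrizing gives Eq. (\ref{q-pre-Lie-1}), and the cyclic sum in Eq. (\ref{q-pre-Lie-2}) cancels in pairs using only commutativity and associativity, not the derivation rule. The paper gives no argument of its own here and simply cites \cite[Example 5]{Ka}, so your direct verification is the routine check that citation stands in for.
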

\begin{pro}
    Let $\mathcal{B}(\cdot,\cdot)$ be a symmetric bilinear form on a vector space $A$ and $s$ be a fixed element in $A$. For a given $q\in{\bf k}$, define a binary operation $\circ:A\otimes A\to A$ by
    \begin{eqnarray*}
        a\circ b:=\mathcal{B}(a,b)s+q\mathcal{B}(a,s)b,\;\;a,b\in A.
    \end{eqnarray*}
    Then $(A,\circ)$ is a $q$-pre-Lie algebra.
\end{pro}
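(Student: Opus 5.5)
The plan is to verify Eqs. (\ref{q-pre-Lie-1}) and (\ref{q-pre-Lie-2}) directly. I will use the symmetry of $\mathcal{B}$ at every step.

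First I compute the commutator. The term $\mathcal{B}(a,b)s$ is symmetric in $a,b$, so it cancels and
\begin{eqnarray*}
[a,b]=q\big(\mathcal{B}(a,s)b-\mathcal{B}(b,s)a\big),\;\;a,b\in A.
\end{eqnarray*}
Substituting this into the product gives
\begin{eqnarray*}
[a,b]\circ c=q\mathcal{B}(a,s)\big(\mathcal{B}(b,c)s+q\mathcal{B}(b,s)c\big)-q\mathcal{B}(b,s)\big(\mathcal{B}(a,c)s+q\mathcal{B}(a,s)c\big).
\end{eqnarray*}
The two $c$-terms cancel, leaving
\begin{eqnarray*}
[a,b]\circ c=q\big(\mathcal{B}(a,s)\mathcal{B}(b,c)-\mathcal{B}(b,s)\mathcal{B}(a,c)\big)s.
\end{eqnarray*}

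Next I expand $a\circ(b\circ c)$ by bilinearity:
\begin{eqnarray*}
a\circ(b\circ c)=\mathcal{B}(b,c)\mathcal{B}(a,s)s+q\mathcal{B}(b,s)\mathcal{B}(a,c)s+q\mathcal{B}(a,s)\mathcal{B}(b,c)s+q^2\mathcal{B}(a,s)\mathcal{B}(b,s)c.
\end{eqnarray*}
Now I antisymmetrize in $a$ and $b$. The second and third terms together are symmetric in $a,b$, and so is the last term, so all three cancel. This yields
\begin{eqnarray*}
a\circ(b\circ c)-b\circ(a\circ c)=\big(\mathcal{B}(a,s)\mathcal{B}(b,c)-\mathcal{B}(b,s)\mathcal{B}(a,c)\big)s.
\end{eqnarray*}
Multiplying by $q$ gives exactly $[a,b]\circ c$, which is Eq. (\ref{q-pre-Lie-1}).

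For Eq. (\ref{q-pre-Lie-2}), I take the cyclic sum of the formula for $[a,b]\circ c$. The coefficient of $qs$ is
\begin{eqnarray*}
\mathcal{B}(a,s)\mathcal{B}(b,c)-\mathcal{B}(b,s)\mathcal{B}(a,c)+\mathcal{B}(b,s)\mathcal{B}(c,a)-\mathcal{B}(c,s)\mathcal{B}(b,a)+\mathcal{B}(c,s)\mathcal{B}(a,b)-\mathcal{B}(a,s)\mathcal{B}(c,b).
\end{eqnarray*}
By the symmetry of $\mathcal{B}$ these six terms cancel in pairs. So the cyclic sum vanishes, and Eq. (\ref{q-pre-Lie-2}) holds for every $q$.

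The only delicate point is the bookkeeping in the expansion of $a\circ(b\circ c)$. There one must check that the mixed terms with coefficient $q$ combine into a symmetric expression, so that they drop out after antisymmetrization. No case distinction on $q$ is needed.
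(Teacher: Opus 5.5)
Your proposal is correct and is the same direct verification of Eqs. (\ref{q-pre-Lie-1}) and (\ref{q-pre-Lie-2}) that the paper leaves as ``straightforward''. Each step checks out: the commutator, the formula for $[a,b]\circ c$, the expansion of $a\circ(b\circ c)$ (whose $s$-part has coefficient $(1+q)\mathcal{B}(a,s)\mathcal{B}(b,c)+q\mathcal{B}(b,s)\mathcal{B}(a,c)$), its antisymmetrization, and the pairwise cancellation in the cyclic sum.
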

\begin{proof}
    It is straightforward.
\end{proof}
\begin{pro}\label{daozi}
    Let $P$ be a derivation of a commutative associative algebra $(A,\cdot)$. For a given $q\in{\bf k}$, define a binary operation $\circ$ on $A$ by
    \begin{eqnarray*}
        a\circ b:=a\cdot P(b)+(1-q)P(a)\cdot b+\lambda\cdot a\cdot b,\;\;a,b\in A,
    \end{eqnarray*}
    where $\lambda\in{\bf k}$ or $\lambda\in A$. Then $(A,\circ)$ is a $q$-pre-Lie algebra.
\end{pro}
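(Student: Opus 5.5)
The plan is a direct verification of Eqs. (\ref{q-pre-Lie-1}) and (\ref{q-pre-Lie-2}). I only use commutativity and associativity of $\cdot$ and the Leibniz rule $P(a\cdot b)=P(a)\cdot b+a\cdot P(b)$. I treat $\lambda$ uniformly as an element of $A$ acting by multiplication, since a scalar $\lambda\in{\bf k}$ is the special case $\lambda\cdot 1$, or equivalently the same computation goes through verbatim.

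\emph{Step 1: the commutator.} The $\lambda$-terms are symmetric in $a,b$ and cancel. Collecting the rest gives
\[
[a,b]=(1-(1-q))\bigl(a\cdot P(b)-P(a)\cdot b\bigr)=q\,u,\qquad u:=a\cdot P(b)-b\cdot P(a).
\]
Consequently
\[
[a,b]\circ c=q\bigl(u\cdot P(c)+(1-q)P(u)\cdot c+\lambda\cdot u\cdot c\bigr),
\]
and by the Leibniz rule $P(u)=a\cdot P^2(b)-b\cdot P^2(a)$, because the terms $P(a)\cdot P(b)$ cancel.

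\emph{Step 2: Eq. (\ref{q-pre-Lie-2}).} I take the cyclic sum over $(a,b,c)$ of each of the three pieces separately.
\begin{itemize}
\item The $u\cdot P(c)$ piece gives $\sum_{\rm cyc}\bigl(P(a)P(b)\cdot c\text{-type}\bigr)$ terms. Each monomial $a\cdot P(b)\cdot P(c)$ appears once with $+$ and once with $-$, so the sum is $0$.
\item The $\lambda$ piece is $\lambda\cdot\sum_{\rm cyc}\bigl(a\cdot c\cdot P(b)-b\cdot c\cdot P(a)\bigr)$. It cancels in the same way.
\item The $P(u)\cdot c$ piece is $\sum_{\rm cyc}\bigl(a\cdot c\cdot P^2(b)-b\cdot c\cdot P^2(a)\bigr)$. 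It also cancels.
\end{itemize}
Hence the cyclic sum vanishes for every $q$, and Eq. (\ref{q-pre-Lie-2}) holds.

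\emph{Step 3: Eq. (\ref{q-pre-Lie-1}).} Since $[a,b]\circ c=q\,(u\circ c)$, it suffices to prove the $q$-free-looking identity
\[
a\circ(b\circ c)-b\circ(a\circ c)=u\circ c .
\]
Multiplying this by $q$ gives Eq. (\ref{q-pre-Lie-1}), and this also covers $q=0$. I expand $a\circ(b\circ c)$ using the Leibniz rule on $P(b\circ c)$. I then sort the monomials by type:
\begin{itemize}
\item the $P^2$-terms: $a\cdot b\cdot P^2(c)$, $a\cdot c\cdot P^2(b)$;
\item the $PP$-terms: $a\cdot P(b)\cdot P(c)$, etc.;
\item the terms with one $P$ applied to $\lambda$;
\item the $\lambda$-linear and $\lambda^2$ terms.
\end{itemize}
I then antisymmetrize in $a\leftrightarrow b$. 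All monomials symmetric in $a,b$ drop out. These include the $\lambda^2$ term, the $P(\lambda)$ term, the term $a\cdot b\cdot P^2(c)$, and the term $P(a)\cdot P(b)\cdot c$.

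What survives should match $u\cdot P(c)+(1-q)\bigl(a\cdot P^2(b)-b\cdot P^2(a)\bigr)\cdot c+\lambda\cdot u\cdot c$ coefficient by coefficient. The coefficients to check are polynomials in $(1-q)$: for instance, the $(1-q)^2$ contributions from $P(a)\cdot P(b)\cdot c$ are symmetric and cancel. So the check reduces to comparing a handful of coefficients.

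The main obstacle is only the bookkeeping in Step 3. In particular, I must make sure the mixed terms $(1-q)P(a)\cdot P(c)\cdot b$ and $(1-q)P(b)\cdot P(c)\cdot a$ combine correctly with the $a\cdot P(b)\cdot P(c)$ terms. These arise both from $a\cdot P(b\circ c)$ and from $(1-q)P(a)\cdot(b\circ c)$. I would handle this by tabulating the coefficient of each antisymmetric monomial type.
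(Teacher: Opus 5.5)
Your proposal is correct and is the same direct verification the paper has in mind (the paper only says ``It is straightforward''). The identity $a\circ(b\circ c)-b\circ(a\circ c)=u\circ c$ with $u=a\cdot P(b)-b\cdot P(a)$ holds exactly: after antisymmetrizing, the surviving coefficients of $a\cdot P(b)\cdot P(c)$, $a\cdot c\cdot P^2(b)$ and $\lambda\cdot a\cdot c\cdot P(b)$ are $1$, $1-q$ and $1$, and their $a\leftrightarrow b$ counterparts carry the opposite signs. Together with $[a,b]=q\,u$ and your cyclic-sum cancellations, this gives both axioms. For a scalar $\lambda$ in a possibly non-unital $A$, just read $P(\lambda\cdot x)$ as $\lambda P(x)$; the $P(\lambda)$-term is symmetric in $a,b$ and cancels anyway, so nothing changes.
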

\begin{proof}
    It is straightforward.
\end{proof}

\delete{By \cite[Theorem 19]{Ka} and \cite{Bu}, we give the classification of $2$-dimensional complex non-commutative $q$-pre-Lie algebras as follows.
\begin{pro}
    Let $(A=\mathbb{C}e_1\oplus\mathbb{C}e_2,\circ)$ be a $2$-dimensional non-commutative $q$-pre-Lie algebra. Then $(A,\circ)$ is isomorphic to one of the following cases:
    \begin{center}
\begin{longtable}{|c|c|c|}
\hline
Type&  the range of $q$ & \makecell{characteristic matrix of $(A,\circ)$} \\
\hline
\endfirsthead
\multicolumn{3}{r}{Continued.}\\
\hline
Type&  the range of $q$  & \makecell{characteristic matrix of $(A,\circ)$} \\
\hline
\endhead
\hline
\endfoot
\hline
\endlastfoot
(Q1)& $q\neq1$ & $\begin{pmatrix}e_1+e_2 & e_2 \\0 &0  \end{pmatrix}$ \\ \hline
(Q2)& $q\neq1$ & $\begin{pmatrix}e_1 & \alpha e_2 \\ 0& 0 \end{pmatrix}$, $\alpha\in{\bf k}$  \\ \hline
(Q3)& $q\neq1$ &$\begin{pmatrix}e_1 & e_2 \\ e_1& e_2 \end{pmatrix}$  \\ \hline
(Q4)&$q\neq1$  &$\begin{pmatrix} e_2& 0 \\0 & 0 \end{pmatrix}$   \\ \hline
(Q5)& $q\neq1$ & $\begin{pmatrix}0 & 0 \\e_1 & 0 \end{pmatrix}$  \\ \hline
(Q6)&$q\neq\frac{1}{2},1$  & $\begin{pmatrix}e_1+e_2 & \frac{q-1}{2q-1}e_2 \\ \frac{q}{2q-1}e_2&0  \end{pmatrix}$  \\ \hline
(Q7)& $q\neq\frac{1}{2},1$ & $\begin{pmatrix}0 & \frac{q-1}{2q-1}e_1 \\ \frac{q}{2q-1}e_1& 0 \end{pmatrix}$  \\ \hline
(Q8)&$q\neq\frac{1}{2},1$  & $\begin{pmatrix}e_1 & \frac{q}{2q-1}e_1+\frac{q-1}{2q-1}e_2 \\ \frac{q-1}{2q-1}e_1+\frac{q}{2q-1}e_2&  e_2\end{pmatrix}$  \\ \hline
(Q9)& $q\neq0,1$ &  $\begin{pmatrix} e_1& \frac{\alpha q-1}{q-1}e_2 \\ \alpha e_2& 0 \end{pmatrix}$, $\alpha\in{\bf k}$ \\ \hline
(Q10)& $q\neq0,1$ & $\begin{pmatrix} e_1&qe_1+(1+q)e_2  \\ (1+q)e_1+qe_2&  e_2\end{pmatrix}$ \\ \hline
(Q11)& $q=\frac{1}{2}$ & $\begin{pmatrix}e_2 & e_2 \\-e_2 &0  \end{pmatrix}$  \\ \hline
(Q12)&$q=\frac{1}{2}$  & $\begin{pmatrix}0 & e_2 \\-e_2 &0  \end{pmatrix}$  \\ \hline
(Q13)& $q=0$ &  $\begin{pmatrix} e_2& 0 \\e_1 &e_2  \end{pmatrix}$ \\ \hline
(Q14)& $q=0$ & $\begin{pmatrix} e_1& e_2 \\e_1-e_2 & 0 \end{pmatrix}$ \\ \hline
(Q15)& $q=0$ & $\begin{pmatrix} e_1& \alpha e_2 \\ \frac{1}{\alpha}e_1& e_2 \end{pmatrix}$, $\alpha\neq0,1$  \\ \hline
(Q16)& $q=0$ & $\begin{pmatrix} e_1& \alpha e_1+\beta e_2 \\ e_1& e_2 \end{pmatrix}$, $\alpha\neq0,\beta\neq1$ or $\alpha\neq1,\beta\neq0$  \\ \hline
(Q17)&$q=0$  & $\begin{pmatrix} e_1&e_1  \\0 &0  \end{pmatrix}$  \\ \hline
(Q18)&$q=0$  & $\begin{pmatrix}e_1 &0  \\ e_1& 0 \end{pmatrix}$  \\ \hline
(Q19)& $q\in{\bf k}$ & $\begin{pmatrix}e_1 &0  \\0 & e_2 \end{pmatrix}$  \\ \hline
(Q20)& $q=1$ & $\begin{pmatrix}e_1 &0  \\ 0& 0 \end{pmatrix}$  \\ \hline
(Q21)&$q=1$  & $\begin{pmatrix}0 & e_1 \\ e_1& e_2 \end{pmatrix}$  \\ \hline
(Q22)& $q=1$ & $\begin{pmatrix}0 & 0 \\0 & e_1 \end{pmatrix}$  \\ \hline
(Q23)& $q=1$ & $\begin{pmatrix}0 & 0 \\-e_1 & \alpha e_2 \end{pmatrix}$, $\alpha\in{\bf k}$  \\ \hline
(Q24)& $q=1$ & $\begin{pmatrix} 0& \alpha e_1 \\ (\alpha-1)e_1& \alpha e_2 \end{pmatrix}$, $\alpha\neq0$  \\ \hline
(Q25)& $q=1$ & $\begin{pmatrix} 0& 0 \\-e_1 &e_1-e_2  \end{pmatrix}$  \\ \hline
(Q26)&$q=1$  & $\begin{pmatrix}e_2 & 0 \\ -e_1& -2e_2 \end{pmatrix}$  \\ \hline
(Q27)& $q=1$ & $\begin{pmatrix}0 &e_1  \\ 0& e_1+e_2 \end{pmatrix}$  \\
\end{longtable}
\end{center}
\vspace{-1cm}
\end{pro}}
\delete{
\begin{defi}
    Let $(A,\circ)$ be a $q$-pre-Lie algebra. A {\bf representation} of $(A,\circ)$ is a triple $(l_\circ,r_\circ,V)$, where $V$ is a vector space, $l_\circ,r_\circ:A\to {\rm End}_{\bf k}(V)$ are two linear maps satisfying
    \begin{eqnarray*}
        &&q(l_\circ(a)l_\circ(b)-l_\circ(b)l_\circ(a))=l_\circ(a\circ b-b\circ a),  \\
        &&q(l_\circ(a)r_\circ(b)-r_\circ(a\circ b))=r_\circ(b)l_\circ(a)-r_\circ(b)r_\circ(a),  \\
        &&(1-q)(r_\circ(a)l_\circ(b)-r_\circ(a)r_\circ(b)+r_\circ(b)r_\circ(a)-r_\circ(b)l_\circ(a))=(1-q)l_\circ(b\circ a-a\circ b),
    \end{eqnarray*}
    for all $a,b\in A$.
\end{defi}
\begin{pro}
    Let $(A,\circ)$ be a $q$-pre-Lie algebra, $V$ be a vector space and $l_\circ,r_\circ:A\to {\rm End}_{\bf k}(V)$ be two linear maps. Then $(l_\circ,r_\circ,V)$ is a representation of $(A,\circ)$ if and only if $A\oplus V$ is a $q$-pre-Lie algebra by defining $\circ_{A\oplus V}$ on $A\oplus V$ by $$(a+u)\circ_{A\oplus V}(b+v)=a\circ b+l_\circ(a)v+r_\circ(b)u ,\;\;a,b\in A,\;u,v\in V.$$ We denote it by $A\ltimes_{l_\circ,r_\circ}V$.
\end{pro}
\begin{proof}
    It is straightforward.
\end{proof}}
\delete{\begin{rmk}
    Let $(A,\circ)$ be a $q$-pre-Lie algebra and $(l_\circ,r_\circ,V)$ a representation of $(A,\circ)$. Then $(A,\circ)$ has a natural dual representation $(q(l_\circ^*-r_\circ^*),-qr_\circ^*,V^*)$ if and only if $q=1$ or $q=-1$.
\end{rmk}}
\subsection{$q$-pre-Lie algebras and $q$-$\mathcal{O}$-operators on Lie algebras}

\begin{defi}
Let $(\mathfrak{g}, [\cdot,\cdot])$ be a Lie algebra and $(V, \rho)$ be a representation of $(\mathfrak{g}, [\cdot,\cdot])$. For each $q\neq0$, a linear map $T: V\rightarrow \mathfrak{g}$ is called a {\bf $q$-$\mathcal{O}$-operator} on $(\mathfrak{g}, [\cdot,\cdot])$ associated to $(V, \rho)$ if $T$ satisfies
\begin{eqnarray}
[T(u), T(v)]=\frac{1}{q}T(\rho(T(u))v-\rho(T(v))u),\;\;u, v\in V.
\end{eqnarray}
A $q$-$\mathcal{O}$-operator is called {\bf strong} if $T$ satisfies
\begin{eqnarray}
(q-1)(\rho([T(u), T(v)])w+\rho([T(v), T(w)])u+\rho([T(w), T(u)])v)=0,\;\;u, v, w\in V.
\end{eqnarray}
\end{defi}
\begin{rmk}
Note that $1$-$\mathcal{O}$-operator is naturally strong and is just the usual $\mathcal{O}$-operator.  $-1$-$\mathcal{O}$-operator is just the anti-$\mathcal{O}$-operator defined in \cite{LB}. Moreover, a $q$-$\mathcal{O}$-operator is just a type-$\left(
                                                                                                          \begin{array}{cc}
                                                                                                            q & 0 \\
                                                                                                            0 & q \\
                                                                                                          \end{array}
                                                                                                        \right)$  $\mathcal{O}$-operator of $(\mathfrak{g}, [\cdot,\cdot])$ associated to $(V, \rho)$ in the sense of \cite{BGLZ}.
\end{rmk}

\begin{pro}\label{pro-constr-Lie}
Let $T: V\rightarrow \mathfrak{g}$ be a $q$-$\mathcal{O}$-operator on a Lie algebra $(\mathfrak{g}, [\cdot,\cdot])$ associated to a representation $(V, \rho)$.
Define a binary operation $\circ: V\otimes V\rightarrow V$ as follows
\begin{eqnarray}\label{eq:q-pre-1}
u\circ v:=\frac{1}{q}\rho(T(u))v,\;\;\;u, v\in V.
\end{eqnarray}
Then $(V, \circ)$ satisfies Eq. (\ref{q-pre-Lie-1}). Moreover, $(V, \circ)$ is Lie-admissible such that $(V, \circ)$ is a $q$-pre-Lie algebra if and only if $T$ is strong. In this case, $T$ is a homomorphism of Lie algebras from the associated Lie algebra $(V,[\cdot,\cdot])$ of $(V, \circ)$ to $(\mathfrak{g}, [\cdot,\cdot])$.
\end{pro}
\begin{proof}
It follows from \cite[Proposition 2.7]{BGLZ} directly.
\delete{Let $u$, $v$, $w\in V$. We obtain
\begin{eqnarray*}
q[u,v]\circ w&=&q^2(\rho(T(u))v-\rho(T(v))u)\circ w\\
&=&q^3(\rho(T(\rho(T(u))v))w-\rho(T(\rho(T(v))u))w)\\
&=&q^2\rho([T(u),T(v)]_{\mathfrak{g}})w\\
&=&q^2(\rho(T(u))\rho(T(v))w-\rho(T(v))\rho(T(u))w)\\
&=&u\circ (v\circ w)-v\circ (u\circ w).
\end{eqnarray*}
Therefore $(V, \circ)$ satisfies Eq. (\ref{q-pre-Lie-1}). Note that
\begin{eqnarray*}
&&(q-1)([u,v]\circ w+[v,w]\circ u+[w,u]\circ v)\\
&&=(q-1)q(\rho(T([u,v]))w+\rho(T([v,w]))u+\rho(T([w,u]))v)\\
&&=(q-1)q(\rho([T(u),T(v)]_{\mathfrak{g}})w+\rho([T(v), T(w)]_{\mathfrak{g}})u+\rho([T(w),T(u)]_{\mathfrak{g}})v).
\end{eqnarray*}
Then this proposition follows directly.}
\end{proof}

\begin{lem}\label{lem-strong}
Let $T: V\rightarrow \mathfrak{g}$ be a $q$-$\mathcal{O}$-operator on a Lie algebra $(\mathfrak{g}, [\cdot,\cdot])$ associated to a representation $(V, \rho)$. If $T$ is invertible, then $T$ is strong.
\end{lem}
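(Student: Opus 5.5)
We have to show that an invertible $q$-$\mathcal{O}$-operator $T$ satisfies the strong condition. The plan is to use invertibility to transport the Lie bracket of $\mathfrak{g}$ to $V$, so that the strong condition becomes the Jacobi identity in $\mathfrak{g}$.

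First, define $\circ$ on $V$ by Eq.~(\ref{eq:q-pre-1}), namely $u\circ v=\frac{1}{q}\rho(T(u))v$, and let $[u,v]=u\circ v-v\circ u$. The $q$-$\mathcal{O}$-operator identity then reads $[T(u),T(v)]=T([u,v])$ for all $u,v\in V$. This needs no invertibility.

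Since $T$ is a linear bijection, $[u,v]=T^{-1}[T(u),T(v)]$ is the transport of the bracket of $\mathfrak{g}$ along $T$. Hence $(V,[\cdot,\cdot])$ is a Lie algebra and $T$ is a Lie algebra isomorphism. In particular the Jacobi identity holds in $V$.

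Now compute the strong expression. Using $[T(u),T(v)]=T([u,v])$ and the definition of $\circ$,
\begin{eqnarray*}
&&\rho([T(u),T(v)])w+\rho([T(v),T(w)])u+\rho([T(w),T(u)])v\\
&=&q\big([u,v]\circ w+[v,w]\circ u+[w,u]\circ v\big).
\end{eqnarray*}
So it suffices to show that the cyclic sum $[u,v]\circ w+[v,w]\circ u+[w,u]\circ v$ vanishes whenever $q\neq 1$.

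By Proposition~\ref{pro-constr-Lie}, $(V,\circ)$ satisfies Eq.~(\ref{q-pre-Lie-1}). The computation in the proof of Proposition~\ref{1/qLcirc} uses only Eq.~(\ref{q-pre-Lie-1}), and gives
\begin{eqnarray*}
q\big([u,[v,w]]-[[u,v],w]-[v,[u,w]]\big)=(1-q)\big([u,v]\circ w+[v,w]\circ u+[w,u]\circ v\big).
\end{eqnarray*}
The left side vanishes by the Jacobi identity in $V$. Therefore $(1-q)$ times the cyclic sum is $0$, and multiplying by $q$ yields exactly the strong condition
\begin{eqnarray*}
(q-1)\big(\rho([T(u),T(v)])w+\rho([T(v),T(w)])u+\rho([T(w),T(u)])v\big)=0.
\end{eqnarray*}

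The only delicate point is that the identity from Proposition~\ref{1/qLcirc} must be derived from Eq.~(\ref{q-pre-Lie-1}) alone, without assuming Eq.~(\ref{q-pre-Lie-2}). Inspecting that computation shows this is indeed the case. If one prefers not to rely on that, the identity can be re-derived directly by expanding $q\,u\circ(v\circ w)$-type terms and substituting Eq.~(\ref{q-pre-Lie-1}).
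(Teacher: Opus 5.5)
Your proof is correct and follows essentially the same route as the paper, which defers to \cite[Proposition 2.9]{BGLZ}: invertibility makes the induced bracket the transported Lie bracket $[u,v]=T^{-1}[T(u),T(v)]$ on $V$, and by Proposition~\ref{pro-constr-Lie} Lie-admissibility of $(V,\circ)$ is equivalent to $T$ being strong. You make the needed direction explicit through the identity from the proof of Proposition~\ref{1/qLcirc}, which indeed uses only Eq.~(\ref{q-pre-Lie-1}); note also that your Jacobi step only needs $T$ to be injective.
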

\begin{proof}
It follows from \cite[Proposition 2.9]{BGLZ} directly.
\delete{Let $\circ: V\otimes V\rightarrow V$ be the binary operation defined by Eq. (\ref{eq:q-pre-1}). Then we obtain
\begin{eqnarray}
[u,v]=u\circ v-v\circ u=q\rho(T(u))v-q\rho(T(v))u=T^{-1}([T(u), T(v)]_\mathfrak{g}),\;\;u,v\in V.
\end{eqnarray}
 Then it is easy to see that $(V, [\cdot,\cdot])$ is a Lie algebra. Therefore by Proposition \ref{pro-constr-Lie}, $T$ is strong.}
\end{proof}
\begin{pro}\label{pro:comp-q-pre}
Let $(\mathfrak{g}, [\cdot,\cdot])$ be a Lie algebra and $q\neq 0$. Then there is a compatible $q$-pre-Lie algebra structure on $(\mathfrak{g}, [\cdot,\cdot])$ if and only if there exists an invertible $q$-$\mathcal{O}$-operator on $(\mathfrak{g}, [\cdot,\cdot])$.
\end{pro}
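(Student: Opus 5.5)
For the forward direction, I would suppose that $(\mathfrak{g},\circ)$ is a compatible $q$-pre-Lie algebra structure on $(\mathfrak{g},[\cdot,\cdot])$, so that $[a,b]=a\circ b-b\circ a$ for all $a,b\in\mathfrak{g}$. By Proposition \ref{1/qLcirc}, $(\mathfrak{g},qL_\circ)$ is then a representation of $(\mathfrak{g},[\cdot,\cdot])$. I take $V=\mathfrak{g}$, $\rho=qL_\circ$ and $T=\id$, which is certainly invertible. It remains to check the $q$-$\mathcal{O}$-operator identity:
\begin{eqnarray*}
\frac{1}{q}T(\rho(T(u))v-\rho(T(v))u)=\frac{1}{q}(qu\circ v-qv\circ u)=[u,v]=[T(u),T(v)],\;\;u,v\in\mathfrak{g}.
\end{eqnarray*}
So $\id$ is an invertible $q$-$\mathcal{O}$-operator associated to $(\mathfrak{g},qL_\circ)$.

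For the converse, let $T:V\to\mathfrak{g}$ be an invertible $q$-$\mathcal{O}$-operator associated to a representation $(V,\rho)$. By Lemma \ref{lem-strong}, $T$ is strong. Proposition \ref{pro-constr-Lie} then shows that $u\circ v:=\frac{1}{q}\rho(T(u))v$ makes $(V,\circ)$ a $q$-pre-Lie algebra, and that $T$ is a Lie algebra homomorphism from its associated Lie algebra $(V,[\cdot,\cdot]_V)$ to $(\mathfrak{g},[\cdot,\cdot])$. Since $T$ is bijective, it is in fact a Lie algebra isomorphism.

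I then transport the structure to $\mathfrak{g}$ by setting
\begin{eqnarray*}
a\circ_{\mathfrak{g}}b:=T(T^{-1}(a)\circ T^{-1}(b))=\frac{1}{q}T(\rho(a)T^{-1}(b)),\;\;a,b\in\mathfrak{g}.
\end{eqnarray*}
Being isomorphic to $(V,\circ)$ via $T$, the algebra $(\mathfrak{g},\circ_{\mathfrak{g}})$ is a $q$-pre-Lie algebra, because Eqs. (\ref{q-pre-Lie-1}) and (\ref{q-pre-Lie-2}) are preserved under algebra isomorphisms. Its commutator is
\begin{eqnarray*}
a\circ_{\mathfrak{g}}b-b\circ_{\mathfrak{g}}a=T([T^{-1}(a),T^{-1}(b)]_V)=[a,b],
\end{eqnarray*}
using that $T$ is a homomorphism. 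Hence $\circ_{\mathfrak{g}}$ is a compatible $q$-pre-Lie algebra structure on $(\mathfrak{g},[\cdot,\cdot])$.

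The only point needing care is that Proposition \ref{pro-constr-Lie} requires strongness, and this is exactly what Lemma \ref{lem-strong} supplies for invertible $T$. The rest of the argument is routine verification.
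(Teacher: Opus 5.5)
Your proof is correct and takes essentially the same route as the paper, which defers to \cite[Theorem 2.10]{BGLZ}. The argument there is exactly yours: $\id$ is an invertible $q$-$\mathcal{O}$-operator associated to $(\mathfrak{g},qL_\circ)$; conversely, an invertible $T$ is strong by Lemma \ref{lem-strong}, so Proposition \ref{pro-constr-Lie} gives a $q$-pre-Lie structure on $V$, which is then transported to $\mathfrak{g}$ via $a\circ_{\mathfrak{g}}b=\frac{1}{q}T(\rho(a)T^{-1}(b))$.
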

\begin{proof}
It follows from \cite[Theorem 2.10]{BGLZ} directly.
\delete{Suppose that there is a compatible  $q$-pre-Lie algebra structure $(\mathfrak{g}, \circ)$ on $(\mathfrak{g}, [\cdot,\cdot])$. Then
\begin{eqnarray*}
[a, b]=a\circ b-b\circ a=q(\frac{1}{q}L_\circ(a)b-\frac{1}{q}L_\circ(b)a),\;\;a, b\in \mathfrak{g}.
\end{eqnarray*}
Therefore $\id: \mathfrak{g}\rightarrow \mathfrak{g}$ is an invertible $q$-$\mathcal{O}$-operator on $(\mathfrak{g}, [\cdot,\cdot])$ associated to $(\mathfrak{g}, \frac{1}{q}L_\circ)$.

Conversely, suppose that $T$ is an invertible $q$-$\mathcal{O}$-operator on $(\mathfrak{g}, [\cdot,\cdot])$ associated to $(V, \rho)$. Then by Lemma \ref{lem-strong},
$T$ is strong. Therefore by Proposition \ref{pro-constr-Lie}, $(V, \circ)$ is a $q$-pre-Lie algebra which is defined by Eq. (\ref{eq:q-pre-1}). Since $T$ is invertible, there is a $q$-pre-Lie algebra structure on $\mathfrak{g}$ given by
\begin{eqnarray*}
a\circ_{\mathfrak{g}} b=T(u)\circ_{\mathfrak{g}} T(v)=T(u\circ v)=qT(\rho(a)T^{-1}(b)),\;\; \text{where $a$, $b\in \mathfrak{g}$, and $T(u)=a$, $T(v)=b$.}
\end{eqnarray*}
Therefore, we obtain
\begin{eqnarray*}
[a,b]&=&[T(u),T(v)]=qT(\rho(T(u))v-\rho(T(v))u)=qT(\rho(a)T^{-1}(b)-\rho(b)T^{-1}(a))\\
&=&a\circ_{\mathfrak{g}} b-b\circ_{\mathfrak{g}} a.
\end{eqnarray*}
Hence $(\mathfrak{g}, \circ_{\mathfrak{g}})$ is a $q$-pre-Lie algebra whose associated Lie algebra is $(\mathfrak{g}, [\cdot,\cdot])$.}
\end{proof}

\delete{
\begin{defi}
Let $V$ be a vector space. A bilinear form $\mathcal{B}(\cdot,\cdot): V\times V\rightarrow {\bf k}$ is called {\bf $q$-symmetric} if
\begin{eqnarray}
\mathcal{B}(u,v)=-q\mathcal{B}(v,u),\;\; u, v\in V.
\end{eqnarray}
\end{defi}}

\delete{\begin{thm}\label{compatible q-pre-Lie}
Let $\mathcal{B}(\cdot,\cdot)$ be a nondegenerate $q$-symmetric $2$-cocycle on a Lie algebra $(\mathfrak{g}, [\cdot,\cdot])$. Then there exists a compatible $q$-pre-Lie algebra structure $\circ$ on $(\mathfrak{g}, [\cdot,\cdot])$ given by
\begin{eqnarray}\label{invariant}
\mathcal{B}(a\circ b,c)=-q\mathcal{B}(b,[a,c]),\;\; a, b, c\in \mathfrak{g}.
\end{eqnarray}
\end{thm}
\begin{proof}
Define a linear map $T:\mathfrak{g}\rightarrow \mathfrak{g}^\ast$ by
\begin{eqnarray}
\langle T(a), b\rangle=\mathcal{B}(a,b),\;\; a, b\in \mathfrak{g}.
\end{eqnarray}
Since $\mathcal{B}(\cdot,\cdot)$ is nondegenerate, $T$ is invertible. Therefore, for any $f$, $g\in \mathfrak{g}^\ast$, there exist $a$, $b\in \mathfrak{g}$ such that
$f=T(a)$, $g=T(b)$. Then for any $c\in \mathfrak{g}$, we obtain
\begin{eqnarray*}
0&=&\mathcal{B}([a,b],c)+\mathcal{B}([b,c],a)+\mathcal{B}([c,a],b)\\
&=&\langle T([a,b]),c\rangle-q\langle T(a),[b,c]\rangle-q\langle T(b), [c,a]\rangle\\
&=&\langle T([T^{-1}(f), T^{-1}(g)]),c\rangle+q\langle\ad^\ast(b)T(a),c\rangle-q\langle\ad^\ast(a)T(b),c\rangle\\
&=&\langle T([T^{-1}(f), T^{-1}(g)]),c\rangle+q\langle\ad^\ast(T^{-1}(g))f,c\rangle-q\langle \ad^\ast(T^{-1}(f))g,c\rangle.
\end{eqnarray*}
Therefore we have $T([T^{-1}(f),T^{-1}(g)])+q\ad^*(T^{-1}(g))f-q\ad^*(T^{-1}(f))g=0$ for any $f$, $g\in \mathfrak{g}^\ast$. Thus $T^{-1}: \mathfrak{g}^\ast\rightarrow \mathfrak{g}$ is an $q$-$\mathcal{O}$-operator on $(\mathfrak{g}, [\cdot,\cdot])$ associated to $(\mathfrak{g}^\ast, \ad^\ast)$. By Proposition \ref{pro:comp-q-pre}, there is a compatible $q$-pre-Lie algebra structure $\circ$ on $(\mathfrak{g}, [\cdot,\cdot])$ given by
\begin{eqnarray*}
a\circ b=qT^{-1}(\ad^\ast(a)T(b)),\;\;a, b\in \mathfrak{g},
\end{eqnarray*}
which gives
\begin{eqnarray*}
\mathcal{B}(a\circ b,c)=\langle T(a\circ b),c\rangle=q\langle \ad^\ast(a)T(b),c\rangle=-q\langle T(b),[a,c]\rangle=-q\mathcal{B}(b, [a,c]),\;\;a, b, c\in \mathfrak{g}.
\end{eqnarray*}
Therefore this theorem holds.
\end{proof}}

\delete{By Proposition \ref{repequi}, Corollary \ref{cor-invariant} and their proofs, we have the following conclusion.
\begin{cor}
    For each $q\neq0$, let $(\mathfrak{g},[\cdot,\cdot])$ be a Lie algebra. If there is a nondegenerate $q$-symmetric $2$-cocycle on $(\mathfrak{g},[\cdot,\cdot])$, then there is a compatible $q$-pre-Lie algebra $(\mathfrak{g},\circ)$ given by Eq. (\ref{invariant}) and furthermore, $(\mathfrak{g},\frac{1}{q}L_\circ)$ and $(\mathfrak{g}^*,{\rm ad}^*)$ are equivalent as representations of $(\mathfrak{g},[\cdot,\cdot])$. Conversely, if there is a compatible $q$-pre-Lie algebra $(\mathfrak{g},\circ)$ such that $(\mathfrak{g},\frac{1}{q}L_\circ)$ and $(\mathfrak{g}^*,{\rm ad}^*)$ are equivalent as representations of $(\mathfrak{g},[\cdot,\cdot])$, then there is a nondegenerate bilinear form $\mathcal{B}(\cdot,\cdot)$ satisfying
    \begin{eqnarray*}
        \mathcal{B}([a,b],c)-q\mathcal{B}(b,[c,a])-q\mathcal{B}(a,[b,c])=0,\;\;a,b,c\in\mathfrak{g}.
    \end{eqnarray*}
\end{cor}}

\subsection{$q$-pre-Lie algebras and $q$-Novikov algebras}
\begin{defi}
Let $A$ be a vector space with a binary operation $\circ: A\otimes A\rightarrow A$. $(A, \circ)$ is called a {\bf $q$-Novikov algebra}, if
\begin{eqnarray}
&&q(a\circ (b\circ c)-b\circ (a\circ c))=[a, b]\circ c,\\
\label{def:q-Nov}&&(q-1)a\circ [b,c]=q((a\circ b)\circ c-(a\circ c)\circ b), \;\;a, b, c\in A,
\end{eqnarray}
where $[a, b]=a\circ b-b\circ a$.
\end{defi}
\begin{rmk}
Note that a $1$-Novikov algebra is just a Novikov algebra and $-1$-Novikov algebra is just an admissible Novikov algebra defined in \cite{LB}.
\end{rmk}

\begin{pro}\label{qpretoqnov}
If $q^2-q+1\neq 0$, a $q$-Novikov algebra is a $q$-pre-Lie algebra.
\end{pro}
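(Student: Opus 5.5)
The plan is to take cyclic sums of the two defining identities of a $q$-Novikov algebra. This should produce two linear relations between two cyclic expressions, and the hypothesis $q^2-q+1\neq 0$ should force both expressions to vanish. The first identity of a $q$-Novikov algebra is exactly Eq. (\ref{q-pre-Lie-1}), so only Eq. (\ref{q-pre-Lie-2}) remains to be proved. For $a,b,c\in A$ set
\begin{eqnarray*}
S:=[a,b]\circ c+[b,c]\circ a+[c,a]\circ b,\qquad T:=a\circ[b,c]+b\circ[c,a]+c\circ[a,b].
\end{eqnarray*}
It suffices to show $S=0$.

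\textbf{Step 1: cyclic sum of Eq. (\ref{q-pre-Lie-1}).} Summing $q(a\circ(b\circ c)-b\circ(a\circ c))=[a,b]\circ c$ over cyclic permutations of $(a,b,c)$ gives $S$ on the right-hand side. On the left, the negative terms are $b\circ(a\circ c)$, $c\circ(b\circ a)$ and $a\circ(c\circ b)$. Each of these pairs with a positive term having the same outer left factor, for example $a\circ(b\circ c)-a\circ(c\circ b)=a\circ[b,c]$. Hence the left-hand side equals $qT$, and we get $qT=S$.

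\textbf{Step 2: cyclic sum of Eq. (\ref{def:q-Nov}).} Summing $(q-1)a\circ[b,c]=q((a\circ b)\circ c-(a\circ c)\circ b)$ cyclically gives $(q-1)T$ on the left. On the right, the negative terms are $(a\circ c)\circ b$, $(b\circ a)\circ c$ and $(c\circ b)\circ a$. Regrouping by the outer right factor, they pair with the positive terms to give $[a,b]\circ c+[b,c]\circ a+[c,a]\circ b$. Hence $(q-1)T=qS$.

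\textbf{Step 3: combine.} Substituting $S=qT$ from Step 1 into Step 2 gives $(q-1)T=q^2T$, that is, $(q^2-q+1)T=0$. Since $q^2-q+1\neq 0$, we get $T=0$, and then $S=qT=0$. So Eq. (\ref{q-pre-Lie-2}) holds, and $(A,\circ)$ is a $q$-pre-Lie algebra.

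The only place needing care is the regrouping of terms in Steps 1 and 2, checking that the cyclic sums collapse exactly to $qT$ and to $qS$. Everything else is immediate.
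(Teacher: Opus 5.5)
Your proof is correct and follows the same route as the paper: cyclically summing the two defining identities to obtain $S=qT$ and $(q-1)T=qS$, then using $q^2-q+1\neq 0$. The only cosmetic difference is that you first conclude $T=0$ and then $S=qT=0$, whereas the paper eliminates $T$ to get $(q^2-q+1)S=0$ directly.
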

\begin{proof}
We only need to prove that Eq. (\ref{q-pre-Lie-2}) holds. Let $a$, $b$, $c\in A$.
By Eq. (\ref{def:q-Nov}), we obtain
\begin{eqnarray*}
&&(q-1)(a\circ [b, c]+b\circ [c,a]+c\circ [a,b])\\
&=&q((a\circ b)\circ c-(a\circ c)\circ b+(b\circ c)\circ a-(b\circ a)\circ c+(c\circ a)\circ b-(c\circ b)\circ a)\\
&=&q([a,b]\circ c+[b,c]\circ a+[c,a]\circ b).
\end{eqnarray*}
 Note that
\begin{eqnarray*}
&&q(a\circ [b, c]+b\circ [c,a]+c\circ [a,b])\\
&=&qa\circ (b\circ c-c\circ b)+qb\circ (c\circ a-a\circ c)+qc\circ (a\circ b-b\circ a)\\
&=&q(a\circ (b\circ c)-b\circ (a\circ c))+q(b\circ (c\circ a)-c\circ (b\circ a))+q(c\circ (a\circ b)-a\circ (c\circ b))\\
&=&[a, b]\circ c+[b,c]\circ a+[c,a]\circ b.
\end{eqnarray*}
Therefore, we have $(q^2-(q-1))([a, b]\circ c+[b,c]\circ a+[c,a]\circ b)=0$. Then this conclusion holds.
\end{proof}

\begin{defi}\cite{Dz}
Let $A$ be a vector space with a binary operation $\ast: A\otimes A\rightarrow A$ and $p\in {\bf k}$. Define a binary operation $\circ: A\otimes A\rightarrow A$ by
\begin{eqnarray}\label{p-alg}
a\circ b:=a\ast b+pb\ast a,\;\;a, b\in A.
\end{eqnarray}
Then $(A, \circ)$ is called the {\bf $p$-algebra} of $(A,\ast)$.
\end{defi}
\begin{rmk}
    If $p\neq1$ and $p\neq-1$, by Eq. (\ref{p-alg}), there is an equivalent expression:
    \begin{eqnarray*}
        a*b=\frac{1}{1-p^2}(a\circ b-pb\circ a),\;\;a,b\in A.
    \end{eqnarray*}
\end{rmk}
\begin{pro}\label{qprenov1}
For each $p\neq -1,0,1$, let $(A, \ast)$ be a pre-Lie algebra and $(A, \circ)$ be the $p$-algebra of $(A, \ast)$. Then $(A, \circ)$ is a $(1-p)$-pre-Lie algebra if and only if $(A, \ast)$ is a Novikov algebra. Moreover, in this case, $(A, \circ)$ is a  $(1-p)$-Novikov algebra.
\end{pro}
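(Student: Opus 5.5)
The plan is to express everything in terms of $\ast$ and then use the pre-Lie identity to reduce. First I compute the commutator of $\circ$:
\[
[a,b]=a\circ b-b\circ a=(1-p)(a\ast b-b\ast a)=(1-p)[a,b]_\ast .
\]
Since $(A,\ast)$ is pre-Lie, $[\cdot,\cdot]_\ast$ is a Lie bracket, so $[\cdot,\cdot]$ is a Lie bracket too. Here $q=1-p\neq 0,1,2$. By Proposition \ref{1/qLcirc}, $(A,\circ)$ is a $(1-p)$-pre-Lie algebra if and only if Eq. (\ref{q-pre-Lie-1}) holds with $q=1-p$. Dividing by $1-p\neq0$, this reads
\[
a\circ(b\circ c)-b\circ(a\circ c)=[a,b]_\ast\circ c .
\]
So the whole statement reduces to analysing this single identity.

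Next I expand both sides in terms of $\ast$, writing $L=L_\ast$ and $R=R_\ast$, so that $a\circ x=a\ast x+p\,x\ast a$.

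The left side is
\[
a\ast(b\ast c)+p\,a\ast(c\ast b)+p(b\ast c)\ast a+p^2(c\ast b)\ast a-(a\leftrightarrow b).
\]
The right side is
\[
[a,b]_\ast\ast c+p\,c\ast[a,b]_\ast .
\]
The terms $a\ast(b\ast c)-b\ast(a\ast c)-[a,b]_\ast\ast c$ cancel by the pre-Lie identity. What remains, after dividing by $p\neq0$, is
\[
a\ast(c\ast b)-b\ast(c\ast a)+(b\ast c)\ast a-(a\ast c)\ast b+p\big((c\ast b)\ast a-(c\ast a)\ast b\big)-c\ast(a\ast b)+c\ast(b\ast a)=0 .
\]
I then use the pre-Lie identity to rewrite $a\ast(c\ast b)-c\ast(a\ast b)=(a\ast c)\ast b-(c\ast a)\ast b$, and similarly for the pair $b,c$. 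Substituting, the expression collapses to
\[
(1+p)\big((c\ast b)\ast a-(c\ast a)\ast b\big)=0 .
\]
Since $p\neq-1$, this is exactly the Novikov identity $(x\ast y)\ast z=(x\ast z)\ast y$. This gives the equivalence.

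For the "moreover" part, assume $(A,\ast)$ is Novikov. I need Eq. (\ref{def:q-Nov}) with $q=1-p$, namely
\[
-p\,a\circ[b,c]=(1-p)\big((a\circ b)\circ c-(a\circ c)\circ b\big).
\]
Using $[b,c]=(1-p)[b,c]_\ast$, the common factor $(1-p)$ cancels, and it remains to check
\[
-p\big(a\ast[b,c]_\ast+p[b,c]_\ast\ast a\big)=(a\circ b)\circ c-(a\circ c)\circ b .
\]
I expand the right side into the eight terms $(a\ast b)\ast c$, $p(b\ast a)\ast c$, $p\,c\ast(a\ast b)$, $p^2c\ast(b\ast a)$, minus the same with $b\leftrightarrow c$. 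I then simplify using right-commutativity (Novikov) together with the pre-Lie identity. The term $(a\ast b)\ast c-(a\ast c)\ast b$ vanishes. The $p^2$ terms give $p^2\big(c\ast(b\ast a)-b\ast(c\ast a)\big)=p^2\big((c\ast b)\ast a-(b\ast c)\ast a\big)=-p^2[b,c]_\ast\ast a$ by the pre-Lie identity. For the $p$ terms, $(b\ast a)\ast c=(b\ast c)\ast a$, and $c\ast(a\ast b)-b\ast(a\ast c)$ is rewritten via the pre-Lie identity to yield $-p\,a\ast[b,c]_\ast$.

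The main obstacle is this last bookkeeping with the $p$-linear terms. I expect it to close, consistent with the stated equivalence, but it is where sign errors are most likely.
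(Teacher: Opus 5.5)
Your proposal is correct and follows essentially the same route as the paper: rewrite everything in terms of $\ast$, cancel with the pre-Lie identity to reduce Eq.~(\ref{q-pre-Lie-1}) to $p(1-p)(1+p)\big((c\ast b)\ast a-(c\ast a)\ast b\big)=0$, and then check the $(1-p)$-Novikov identity by direct expansion. The $p$-linear bookkeeping you flagged does close: right-commutativity gives $(b\ast a)\ast c-(c\ast a)\ast b=[b,c]_\ast\ast a$, and the pre-Lie and Novikov identities together give $c\ast(a\ast b)-b\ast(a\ast c)=-a\ast[b,c]_\ast-[b,c]_\ast\ast a$, so these terms sum to exactly $-a\ast[b,c]_\ast$, as required.
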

\begin{proof}
Denote the associated Lie algebra of $(A, \ast)$ by $(A, \{\cdot,\cdot\})$. Then we obtain
\begin{eqnarray*}
[a,b]=a\circ b-b\circ a=(1-p)(a\ast b-b\ast a)=(1-p)\{a,b\},\;\;a, b\in A.
\end{eqnarray*}
Therefore $(A, \circ)$ is Lie-admissible. Let $a$, $b$, $c\in A$ and $r=1-p$. Then we get
\begin{eqnarray*}
&&r(a\circ (b\circ c)-b\circ (a\circ c))-[a,b]\circ c\\
&=&r\Big( a\ast (b\ast c)+pa\ast(c\ast b)+p(b\ast c)\ast a+p^2(c\ast b)\ast a-b\ast(a\ast c)-pb\ast(c\ast a)\\
&&-p(a\ast c)\ast b-p^2(c\ast a)\ast b\Big) -(1-p)(a\ast b)\ast c-(p-1)(b\ast a)\ast c\\
&&-p(1-p)c\ast (a\ast b)-p(p-1)c\ast (b\ast a)\\
&=&r\Big(p\{a,c\}\ast b+p(b\ast c)\ast a-p\{b,c\}\ast a-p(a\ast c)\ast b+p^2((c\ast b)\ast a-(c\ast a)\ast b)\Big)\\
&=&p(1-p)(1+p)((c\ast b)\ast a-(c\ast a)\ast b).
\end{eqnarray*}
Therefore, if $p\neq -1,0,1$, then $(A, \circ)$ is a $(1-p)$-pre-Lie algebra if and only if $(A, \ast)$ is a Novikov algebra. Moreover, in this case, we have
\begin{eqnarray*}
&&((1-p)-1)a\circ [b,c]-(1-p)(a\circ b)\circ c+(1-p)(a\circ c)\circ b\\
&=&(1-p)\Big(-pa\ast (b\ast c)+pa\ast(c\ast b)-p^2(b\ast c)\ast a+p^2(c\ast b)\ast a-(a\ast b)\ast c\\
&&-p(b\ast a)\ast c-pc\ast (a\ast b)-p^2c\ast (b\ast a)+(a\ast c)\ast b+p(c\ast a)\ast b+pb\ast (a\ast c)\\
&&+p^2 b\ast (c\ast a)\Big)\\
&=&(1-p)\Big(-p(a\ast b-b\ast a)\ast c+p(a\ast c-c\ast a)\ast b+p^2(b\ast c-c\ast b)\ast a\\
&&-p^2(b\ast c)\ast a+p^2(c\ast b)\ast a-p(b\ast a)\ast c+p(c\ast a)\ast b\Big)\\
&=&0.
\end{eqnarray*}
Therefore $(A, \circ)$ is a  $(1-p)$-Novikov algebra.
\end{proof}
\begin{pro}\label{qprenov2}
    For each $p\neq 0,1$, let $(A,\circ)$ be a $(1-p)$-pre-Lie algebra and $(A,*)$ be the $(-p)$-algebra of $(A,\circ)$. Then $(A,*)$ is a pre-Lie algebra if and only if $(A,\circ)$ is a $(1-p)$-Novikov algebra. Moreover, in this case, $(A,*)$ is a Novikov algebra.
\end{pro}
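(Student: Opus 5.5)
The plan is a direct computation: express the pre-Lie associator of $\ast$ in terms of $\circ$ and reduce it with the $(1-p)$-pre-Lie identities, in the same way as in the proof of Proposition \ref{qprenov1}. Set $r=1-p$. Then $r\neq 0$ and $r\neq 1$, since $p\neq 0,1$. We have $a\ast b=a\circ b-pb\circ a$. First I record the commutator:
\[
a\ast b-b\ast a=(1+p)[a,b].
\]
Since $r\neq 1$, Eq. (\ref{q-pre-Lie-2}) gives the cyclic identity $[a,b]\circ c+[b,c]\circ a+[c,a]\circ b=0$. Eq. (\ref{q-pre-Lie-1}) gives $r(a\circ(b\circ c)-b\circ(a\circ c))=[a,b]\circ c$. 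These are the only two relations I will use for reduction.

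\textbf{Step 1.} Expand
\[
P(a,b,c):=(a\ast b)\ast c-a\ast(b\ast c)-(b\ast a)\ast c+b\ast(a\ast c).
\]
The terms $(a\ast b-b\ast a)\ast c$ give $(1+p)\big([a,b]\circ c-p\,c\circ[a,b]\big)$. The remaining terms $-a\ast(b\ast c)+b\ast(a\ast c)$ expand into four groups:
\begin{itemize}
\item $-a\circ(b\circ c)+b\circ(a\circ c)$, which Eq. (\ref{q-pre-Lie-1}) turns into $-\frac1r[a,b]\circ c$;
\item terms of the form $a\circ(c\circ b)$;
\item terms $(b\circ c)\circ a$ and $(a\circ c)\circ b$;
\item terms $p^2\big((c\circ b)\circ a-(c\circ a)\circ b\big)$.
\end{itemize}
I will group the last three groups into commutator expressions. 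Two conversions do the work: using Eq. (\ref{q-pre-Lie-1}) on $c\circ(a\circ b)-a\circ(c\circ b)$ converts $c\circ[a,b]$-type terms into products $[\cdot,\cdot]\circ\cdot$, and the cyclic identity eliminates $[a,b]\circ c$ in favour of $[b,c]\circ a+[c,a]\circ b$. The target is to show that
\[
rP(a,b,c)=\kappa\,\big(N(c,b,a)-N(c,a,b)\big)
\]
or a similar antisymmetrized form, for a nonzero scalar $\kappa$. Here
\[
N(a,b,c):=(r-1)\,a\circ[b,c]-r\big((a\circ b)\circ c-(a\circ c)\circ b\big)
\]
is the defect in Eq. (\ref{def:q-Nov}). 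I expect $\kappa$ to be a multiple of $p$. Since $N$ is antisymmetric in its last two arguments, the antisymmetrized form will in fact collapse to a single $N(c,\cdot,\cdot)$ term, so that $P(a,b,c)=0$ for all $a,b,c$ holds if and only if $N\equiv 0$. This gives the equivalence, using $p\neq0$ and $r\neq 0$.

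\textbf{Step 2 (Novikov).} Assuming $(A,\circ)$ is $(1-p)$-Novikov, I will compute
\[
(a\ast b)\ast c-(a\ast c)\ast b
\]
in terms of $\circ$. The terms $(a\circ b)\circ c-(a\circ c)\circ b$ are replaced by $\frac{r-1}{r}a\circ[b,c]$ using Eq. (\ref{def:q-Nov}). The terms $(b\circ a)\circ c-(c\circ a)\circ b$ and $c\circ(a\circ b)-b\circ(a\circ c)$ are handled with Eq. (\ref{q-pre-Lie-1}) and Eq. (\ref{def:q-Nov}) with the roles of the variables permuted. The aim is to show that everything cancels, as in the second computation in the proof of Proposition \ref{qprenov1}.

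A shortcut is available when $p\neq-1$. The $p$-algebra of $\ast$ equals $(1-p^2)\circ$, so one could try to invoke Proposition \ref{qprenov1} after rescaling. However, rescaling $\circ$ by a scalar does not preserve the $q$-pre-Lie identity with the same $q$ and coefficients, so I will rely on the direct computation instead.

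The main obstacle is the bookkeeping in Step 1: choosing which applications of Eq. (\ref{q-pre-Lie-1}) and the cyclic identity produce exactly the combination $N$, with a coefficient that is visibly nonzero under only $p\neq0,1$. It matters in particular that the case $p=-1$ does not make the coefficient vanish. My first attempt will be to rewrite every $x\circ(y\circ z)$ term via Eq. (\ref{q-pre-Lie-1}) so that only terms of the form $(x\circ y)\circ z$ remain. After that, I will match coefficients of the six monomials $(x\circ y)\circ z$ against $N(c,a,b)$.
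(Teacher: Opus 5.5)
Your route is the paper's: expand the pre-Lie associator $P(a,b,c)$ of $\ast$, reduce it with Eq.~(\ref{q-pre-Lie-1}) and the cyclic identity, and recognize a nonzero multiple of the $(1-p)$-Novikov defect. Your target is correct. The paper's computation gives $P(a,b,c)=p\,N(c,a,b)$, i.e.\ $\kappa=-\tfrac12p(1-p)$ in your normalization, and the equivalence then follows from $p\neq0$.

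\textbf{The reduction procedure you announce cannot work.} Eq.~(\ref{q-pre-Lie-1}) only trades $x\circ(y\circ z)$ for $y\circ(x\circ z)$, with the same innermost right factor, at the cost of a term $[x,y]\circ z$. It never eliminates a right-nested monomial, and the cyclic identity contains none. So you cannot reduce everything to the six monomials $(x\circ y)\circ z$. Indeed $N(c,a,b)$ itself contains the right-nested term $c\circ[a,b]$, so there would be nothing to match it against. The fix is to normalize rather than eliminate. Use
\[
a\circ(c\circ b)-b\circ(c\circ a)=c\circ[a,b]+\tfrac1r\big([a,c]\circ b-[b,c]\circ a\big),\qquad (b\circ c)\circ a=[b,c]\circ a+(c\circ b)\circ a,
\]
together with the analogous splitting of $(a\circ c)\circ b$. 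This gives
\[
P(a,b,c)=-\frac{p^2}{r}\big([a,b]\circ c+[b,c]\circ a+[c,a]\circ b\big)-p^2\,c\circ[a,b]+pr\big((c\circ b)\circ a-(c\circ a)\circ b\big).
\]
The cyclic identity kills the first term, and what remains is exactly $p\,N(c,a,b)$.

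\textbf{Your reason for discarding the shortcut is false.} Eqs.~(\ref{q-pre-Lie-1}), (\ref{q-pre-Lie-2}) and (\ref{def:q-Nov}), and the pre-Lie and Novikov identities, are all homogeneous of degree two in the product. They are therefore preserved by $\circ\mapsto\lambda\circ$ for $\lambda\neq0$.

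The shortcut cannot replace Step 1: Proposition~\ref{qprenov1} presupposes that $\ast$ is pre-Lie, so it yields only one direction. But once Step 1 is done, it settles Step 2 without computation:
\begin{itemize}
\item For $p\neq-1$: $\ast$ is pre-Lie and its $p$-algebra $(1-p^2)\circ$ is $(1-p)$-pre-Lie, so $\ast$ is Novikov by Proposition~\ref{qprenov1}.
\item For $p=-1$: $a\ast b=a\circ b+b\circ a$ is commutative, and a commutative pre-Lie algebra is associative, hence Novikov.
\end{itemize}

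If you keep the direct computation for Step 2, as the paper does, you will also need the consequence $a\circ[b,c]+b\circ[c,a]+c\circ[a,b]=0$ of Eqs.~(\ref{q-pre-Lie-1}) and (\ref{q-pre-Lie-2}). With it one finds $(a\ast b)\ast c-(a\ast c)\ast b=-N(a,b,c)$ in any $(1-p)$-pre-Lie algebra.
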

\begin{proof}
    Let $a,b,c\in A$. Then we obtain
    \begin{eqnarray*}
        &&(a*b)*c-a*(b*c)-(b*a)*c+b*(a*c)     \\
        &=&(a\circ b-pb\circ a)\circ c-pc\circ(a\circ b-pb\circ a)-a\circ(b\circ c-pc\circ b)+p(b\circ c-pc\circ b)\circ a\\
        &&-(b\circ a-pa\circ b)\circ c+pc\circ(b\circ a-pa\circ b)+b\circ(a\circ c-pc\circ a)-p(a\circ c-pc\circ a)\circ b     \\
        &=&\frac{p^2}{p-1}[a,b]\circ c-(p^2+p)c\circ[a,b]+p[b,c]\circ a+p[c,a]\circ b+(p-p^2)(c\circ b)\circ a\\
        &&+(p^2-p)(c\circ a)\circ b+pa\circ(c\circ b)-pb\circ(c\circ a)    \\
        &=&\frac{p^2}{p-1}[a,b]\circ c-(p^2+p)c\circ[a,b]+p[b,c]\circ a+p[c,a]\circ b +(p-p^2)(c\circ b)\circ a \\
        &&+(p^2-p)(c\circ a)\circ b-\frac{p}{1-p}[b,a]\circ c-pc\circ[b,a]    \\
        &=&p\Big(-pc\circ[a,b]+(1-p)(c\circ b)\circ a+(p-1)(c\circ a)\circ b \Big)   .
    \end{eqnarray*}
    So $(A,*)$ is a pre-Lie algebra if and only if $(A,\circ)$ is a $(1-p)$-Novikov algebra. Furthermore, if $(A,\circ)$ is a $(1-p)$-Novikov algebra, then we get
    \begin{eqnarray*}
        &&(a*b)*c-(a*c)*b\\
        &=& (a\circ b-pb\circ a)\circ c-pc\circ(a\circ b-pb\circ a)-(a\circ c-pc\circ a)\circ b+pb\circ(a\circ c-pc\circ a)    \\
        &=&[a,b]\circ c+(1-p)(b\circ a)\circ c-pc\circ[a,b]-p(1-p)c\circ(b\circ a)-[a,c]\circ b     \\
        &&+(p-1)(c\circ a)\circ b+pb\circ[a,c]+p(1-p)b\circ(c\circ a)     \\
        &=&[a,b]\circ c+[c,a]\circ b+p[b,c]\circ a+pc\circ[b,a]+pb\circ[a,c]+(1-p)(b\circ a)\circ c\\
        &&+(p-1)(c\circ a)\circ b    \\
        &=&(p-1)([b,a]\circ c+[a,c]\circ b)+pa\circ[b,c]+(1-p)(b\circ a)\circ c +(p-1)(c\circ a)\circ b     \\
        &=&(p-1)((a\circ c)\circ b-(a\circ b)\circ c)+pa\circ[b,c]    =0.
    \end{eqnarray*}
    So $(A,*)$ is a Novikov algebra.
\end{proof}
\begin{rmk}
    For each $p$ satisfying $p^2-p+1\neq0$
    and $p\neq -1,0,1$, Propositions \ref{qprenov1} and \ref{qprenov2} give the following relationship:
    \begin{eqnarray*}
        \{\text{pre-Lie}\}\hookleftarrow\{\text{Novikov}\}\xrightleftharpoons{p\text{-algebra}}{(-p)\text{-algebra}}\{(1-p)\text{-Novikov} \}\hookrightarrow  \{(1-p)\text{-pre-Lie} \}.
    \end{eqnarray*}
\end{rmk}
\begin{ex}
    Let $A$ be a vector space of dimension $n\geq2$ and $f,g:A\to\mathbb{C}$ be two linear functions. Then it is direct to check that Eq. (\ref{fgdef}) defines a Novikov algebra $(A,*)$ if and only if $g=0$, i.e., $a*b=f(b)a$ for all $a,b\in A$. On the other hand, Eq. (\ref{fgdef}) defines a $q$-Novikov algebra $(A,\circ)$ if and only if $(q-1)f+g=0$, i.e., $a\circ b=f(b)a+(1-q)f(a)b$ for all $a,b\in A$, whose classification is given as Cases (2) and (3) in Corollary \ref{fgdef-cor}. Now let $q\neq 0,1,2$. Clearly, if $(A,\circ)$ is a $q$-Novikov algebra, then the $(q-1)$-algebra $(A,*)$ of $(A,\circ)$ is a Novikov algebra, where
    \begin{eqnarray*}
        a*b=a\circ b+(q-1)b\circ a=(f+(q-1)g)(b)a+(g+(q-1)f)(a)b,\;\;a,b\in A.
    \end{eqnarray*}
    Then we get $g+(q-1)f=0$. Conversely, if $g=(1-q)f$, then $(A,\circ)$ is the $(1-q)$-algebra of the Novikov algebra $(A,*)$ given by $a*b=f(b)a$ for all $a,b\in A$. Hence $(A,\circ)$ is a $q$-Novikov algebra.
\end{ex}

The correspondence linking Novikov algebras to infinite-dimensional Lie algebras (see \cite{BN}) induces a correspondence between $(1-p)$-Novikov algebras and infinite-dimensional Lie algebras, where $p\neq0,1$.
\begin{pro}\label{pro-corr}\cite{BN}
    Let $A$ be a vector space with a binary operation $*$. Define a binary operation $[\cdot,\cdot]$ on $A[t,t^{-1}]:=A\otimes{\bf k}[t,t^{-1}] $ by
    \begin{eqnarray*}
        [at^m,bt^n]:=m(a* b)t^{m+n-1}-n(b* a)t^{m+n-1},\;\;a,b\in A,\;m,n\in\mathbb{Z},
    \end{eqnarray*}
    where $at^m:=a\otimes t^m$. Then $(A[t,t^{-1}],[\cdot,\cdot])$ is a Lie algebra if and only if $(A,*)$ is a Novikov algebra.
\end{pro}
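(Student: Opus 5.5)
We verify directly that the bracket on $A[t,t^{-1}]$ is antisymmetric and that the Jacobi identity holds exactly when $(A,*)$ is Novikov. Antisymmetry is built in: swapping $(a,m)$ and $(b,n)$ in $m(a*b)t^{m+n-1}-n(b*a)t^{m+n-1}$ changes the sign. So everything reduces to the Jacobi identity. We expand it on homogeneous elements $at^m, bt^n, ct^l$ and sort the resulting polynomial identity in $m,n,l$ by monomials.

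First compute
\begin{eqnarray*}
[[at^m,bt^n],ct^l]&=&(m+n-1)m\,((a*b)*c)\,t^{m+n+l-2}-(m+n-1)n\,((b*a)*c)\,t^{m+n+l-2}\\
&&-lm\,(c*(a*b))\,t^{m+n+l-2}+ln\,(c*(b*a))\,t^{m+n+l-2}.
\end{eqnarray*}
Next sum cyclically over $(a,m),(b,n),(c,l)$. The total must vanish for all integers $m,n,l$. Since $\mathbf{k}$ has characteristic zero and the coefficients are polynomials of degree at most $2$ in $(m,n,l)$, the identity holds for all integers if and only if the coefficient of each monomial vanishes.

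The monomials split into three groups.
\begin{itemize}
\item[(1)] The linear terms (from the $-1$ in $m+n-1$) come from $-m(a*b)*c+n(b*a)*c$ together with cyclic permutations. Collecting the coefficient of $m$ gives $-(a*b)*c+(a*c)*b$. This vanishes for all $a,b,c$ exactly when $(a*b)*c=(a*c)*b$, which is right commutativity.
\item[(2)] The square terms, such as $m^2$, have coefficient $(a*b)*c-(a*c)*b$ up to sign. They give the same condition.
\item[(3)] The mixed terms, such as $mn$, give a combination of $(a*b)*c$, $(b*a)*c$, $a*(b*c)$, $b*(a*c)$, $c*(a*b)$, $c*(b*a)$ and similar products. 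Once right commutativity is used, this combination should reduce to the pre-Lie associator identity $(a*b)*c-a*(b*c)=(b*a)*c-b*(a*c)$.
\end{itemize}

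For the converse direction, assuming $(A,*)$ is Novikov, all three groups of coefficients vanish, so the Jacobi identity holds. For the forward direction, we extract each coefficient by specializing $m,n,l$. For example, taking $l=0$ and $n=0$ isolates the $m$ and $m^2$ terms. Using several integer values then separates the linear coefficient from the quadratic one.

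The main obstacle is the bookkeeping in group (3). We must check that the $mn$ coefficient, together with the $nl$ and $lm$ coefficients, yields exactly the left-symmetry identity after right commutativity is applied, rather than some weaker consequence. If the raw $mn$ coefficient mixes both identities, we would first impose right commutativity, which comes from the linear terms, and then simplify.

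Since this is the classical result of Balinskii--Novikov, we may alternatively just cite \cite{BN}, as the paper does.
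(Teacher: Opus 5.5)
Your proposal is correct. The paper gives no argument for this statement: it is quoted from \cite{BN}. Your direct expansion of the Jacobiator on $at^m, bt^n, ct^l$ is therefore a self-contained substitute for the citation, not a variant of a proof in the paper. The citation buys brevity. Your route buys an explicit dictionary between monomials in $(m,n,l)$ and the two Novikov axioms, at the cost of some routine bookkeeping.

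That bookkeeping is lighter than you feared in group (3). If you carry out the cyclic sum, the raw coefficient of $mn$ is $(a*b)*c-(b*a)*c-a*(b*c)+b*(a*c)$. This is already exactly the left-symmetric identity, and the $nl$ and $lm$ coefficients are its cyclic relabelings. So right commutativity never has to be imposed first.

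The coefficients of $m^2$ and $m$ are $(a*b)*c-(a*c)*b$ and its negative. Hence at $(m,n,l)=(m,0,0)$ the Jacobiator is $(m^2-m)((a*b)*c-(a*c)*b)\,t^{m-2}$. This means the forward direction needs only two specializations. Taking $(2,0,0)$ gives right commutativity. Taking $(1,1,0)$, where the $m^2-m$ and $n^2-n$ contributions vanish, gives left symmetry directly.

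The converse holds because each of the nine coefficients is, up to sign and relabeling, an instance of one of these two identities.
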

\begin{cor}\label{construct1}
    For each $p\neq0$ and $p\neq1$, let $A$ be a vector space with a binary operation $\circ$. Define a binary operation $[\cdot,\cdot]$ on $A[t,t^{-1}]:=A\otimes{\bf k}[t,t^{-1}] $ by
    \begin{eqnarray*}
        [at^m,bt^n]:=(m+pn)(a\circ b)t^{m+n-1}-(pm+n)(b\circ a)t^{m+n-1}, \;\;a,b\in A,\; m,n\in\mathbb{Z},
    \end{eqnarray*}
    where $at^m:=a\otimes t^m$. Then $(A[t,t^{-1}],[\cdot,\cdot])$ is a Lie algebra if and only if $(A,\circ)$ is a $(1-p)$-Novikov algebra.
\end{cor}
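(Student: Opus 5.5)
The plan is to reduce the corollary to Proposition \ref{pro-corr} by means of the $p$-algebra correspondence of Propositions \ref{qprenov1} and \ref{qprenov2}. Given $(A,\circ)$, let $(A,*)$ be its $(-p)$-algebra, $a*b:=a\circ b-pb\circ a$. The first step is to check that the bracket in the corollary is exactly the Novikov-type bracket of Proposition \ref{pro-corr} built from $*$. Compute
\begin{eqnarray*}
m(a*b)-n(b*a)&=&m(a\circ b-pb\circ a)-n(b\circ a-pa\circ b)\\
&=&(m+pn)(a\circ b)-(pm+n)(b\circ a).
\end{eqnarray*}
So the two brackets on $A[t,t^{-1}]$ coincide. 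By Proposition \ref{pro-corr}, $(A[t,t^{-1}],[\cdot,\cdot])$ is a Lie algebra if and only if $(A,*)$ is a Novikov algebra.

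It remains to show that, for $p\neq 0,1$, $(A,*)$ is Novikov if and only if $(A,\circ)$ is a $(1-p)$-Novikov algebra. The direction ``$\Leftarrow$'' is handled first. By the identity $(1-p)(a\circ(b\circ c)-b\circ(a\circ c))=[a,b]\circ c$, the algebra $(A,\circ)$ satisfies Eq. (\ref{q-pre-Lie-1}) with $q=1-p$. The computations in the proof of Proposition \ref{qprenov2} use only Eq. (\ref{q-pre-Lie-1}) (the step rewriting $pa\circ(c\circ b)-pb\circ(c\circ a)$) together with Eq. (\ref{def:q-Nov}). I will therefore re-read that proof and confirm that it never invokes Eq. (\ref{q-pre-Lie-2}). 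This yields that $*$ is pre-Lie and right-commutative, i.e. Novikov.

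For ``$\Rightarrow$'', assume $(A,*)$ is Novikov. If $p\neq -1$, then $\circ$ is recovered as the $p$-algebra of $*$: indeed $a*b+p\,b*a=(1-p^2)a\circ b$, so $a\circ b=\frac{1}{1-p^2}(a*b+pb*a)$. A scalar multiple of a $(1-p)$-Novikov operation is again $(1-p)$-Novikov, since both defining identities are homogeneous quadratic. Proposition \ref{qprenov1} then gives that $\circ$ is $(1-p)$-Novikov.

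The main obstacle is the case $p=-1$, where the $p$-algebra map is not invertible. In that case $a*b=a\circ b+b\circ a$ is commutative. A commutative Novikov algebra is commutative associative, and the bracket becomes $(m-n)(a\circ b-b\circ a)$. Here I will instead verify the two defining identities of $2$-Novikov algebras directly from the Jacobi identity of the loop bracket. The plan is to expand the Jacobi identity for $at^m,bt^n,ct^k$ and compare coefficients of the independent polynomials in $m,n,k$. For general $p$ this coefficient comparison is also a uniform fallback: the Jacobi identity yields polynomial identities in $(m,n,k)$ whose coefficients are exactly Eqs. (\ref{q-pre-Lie-1}) and (\ref{def:q-Nov}) with $q=1-p$. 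I expect this bookkeeping to be the only genuinely laborious step.
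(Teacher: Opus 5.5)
Your reduction is the right one. The identity $m(a*b)-n(b*a)=(m+pn)(a\circ b)-(pm+n)(b\circ a)$ identifies the bracket with the Balinskii--Novikov bracket of the $(-p)$-algebra, and Proposition~\ref{pro-corr} reduces everything to ``$*$ is Novikov''. For $p\neq\pm1$, your ``$\Rightarrow$'' argument via Proposition~\ref{qprenov1} and rescaling is correct. Two steps, however, fail, and in both cases the statement itself is false.

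\textbf{The ``$\Leftarrow$'' direction.} The proof of Proposition~\ref{qprenov2} does use Eq.~(\ref{q-pre-Lie-2}). Write $C(a,b,c)=[a,b]\circ c+[b,c]\circ a+[c,a]\circ b$, and let $N(c,a,b)=-p\,c\circ[a,b]+(1-p)\big((c\circ b)\circ a-(c\circ a)\circ b\big)$ be the defect of Eq.~(\ref{def:q-Nov}) at $(c,a,b)$. Using only Eq.~(\ref{q-pre-Lie-1}) with $q=1-p$, one gets exactly $(a*b)*c-a*(b*c)-(b*a)*c+b*(a*c)=\frac{p^2}{p-1}C(a,b,c)+p\,N(c,a,b)$. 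The paper's rewriting of $pa\circ(c\circ b)-pb\circ(c\circ a)$ is valid only modulo $p\big(a\circ[b,c]+b\circ[c,a]+c\circ[a,b]\big)=\frac{p}{1-p}C(a,b,c)$, and its last line discards $p\,C(a,b,c)$. So you need Eq.~(\ref{q-pre-Lie-2}) first, and Proposition~\ref{qpretoqnov} supplies it only when $p^2-p+1\neq0$.

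This is not a technicality. Let $p$ be a primitive sixth root of unity (e.g.\ over $\mathbb{C}$) and $q=1-p$, so $q^2-q+1=0$. Take $\dim V=3$ and $A=V\oplus(V\otimes V)\oplus{\bf k}w$. For $a,b,c\in V$ set $a\circ b=a\otimes b$, $a\circ(b\otimes c)=\det(a,b,c)w$ and $(a\otimes b)\circ c=q\det(a,b,c)w$, with all other products zero.
\begin{itemize}
\item By degree, only $a,b,c\in V$ matter.
\item Eq.~(\ref{q-pre-Lie-1}) holds identically.
\item Eq.~(\ref{def:q-Nov}) reduces to $(q-1-q^2)(\det(a,b,c)-\det(a,c,b))w=0$, which holds.
\end{itemize}
So $(A,\circ)$ is $(1-p)$-Novikov. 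Yet $C(a,b,c)=6q\det(a,b,c)w\neq0$, so by the identity above $*$ is not pre-Lie, and the loop bracket is not Lie.

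\textbf{The case $p=-1$.} Your bracket is miscomputed. It is $(m-n)a\circ b-(n-m)b\circ a=(m-n)(a\circ b+b\circ a)$, i.e.\ $(m-n)(a*b)$, consistent with your own first display. This depends only on the symmetric part of $\circ$, while the $2$-Novikov identities constrain the antisymmetric part. Hence no coefficient comparison in the Jacobi identity can recover them, and your ``uniform fallback'' fails for the same reason.

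Concretely, for anticommutative $\circ$ the bracket vanishes identically, while Eq.~(\ref{q-pre-Lie-1}) with $q=2$ is exactly the Jacobi identity for $\circ$. Take $e_1\circ e_2=-e_2\circ e_1=e_2$ and $e_2\circ e_3=-e_3\circ e_2=e_1$, with all other products zero. The loop algebra is abelian, but Eq.~(\ref{q-pre-Lie-1}) at $(e_1,e_2,e_3)$ reads $0=2e_1$.

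So the corollary is only true under the extra hypotheses $p\neq-1$ and $p^2-p+1\neq0$, which are those of the remark after Proposition~\ref{qprenov2}. The paper's one-line proof via Propositions~\ref{qprenov2} and \ref{pro-corr} has the same blind spots, since Proposition~\ref{qprenov2} presupposes that $(A,\circ)$ is $(1-p)$-pre-Lie. Under those hypotheses your argument becomes complete once the ``$\Leftarrow$'' step invokes Proposition~\ref{qpretoqnov} before Proposition~\ref{qprenov2}.
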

\begin{proof}
It follows directly from Propositions \ref{qprenov2} and \ref{pro-corr}.
\end{proof}

Next, we provide some constructions of $q$-Novikov algebras from commutative associative algebras.

Recall \cite{LB} that an {\bf admissible pair} on a commutative associative algebra $(A,\cdot)$ is a pair $(P,Q)$, where $P,Q:A\to A$ are two linear maps satisfying
\begin{eqnarray*}
    Q(a\cdot b)=Q(a)\cdot b+a\cdot P(b),\;\;a,b\in A.
\end{eqnarray*}
\begin{pro}\label{construct2}
    For each $p\neq0$ and $p\neq1$, let $(P,Q)$ be an admissible pair on a commutative associative algebra $(A,\cdot)$. Define a binary operation $\circ$ on $A$ by
    \begin{eqnarray*}
        a\circ b:=a\cdot Q(b)+pQ(a)\cdot b,\;\;a,b\in A.
    \end{eqnarray*}
    Then $(A,\circ)$ is a $(1-p)$-Novikov algebra. Furthermore, the associated Lie algebra $(A,[\cdot,\cdot])$ of $(A,\circ)$ satisfies
    \begin{eqnarray*}
        [a,b]=(1-p)(a\cdot Q(b)-Q(a)\cdot b)=(1-p)(a\cdot P(b)-P(a)\cdot b),\;\;a,b\in A.
    \end{eqnarray*}
\end{pro}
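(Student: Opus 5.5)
The plan is to reduce to the Novikov case via the $p$-algebra correspondence, using Proposition \ref{qprenov1}. Define $a*b:=a\cdot Q(b)$ on $A$. The claim is that $(A,*)$ is a Novikov algebra. The $p$-algebra of $(A,*)$ is then exactly $a\circ b=a*b+p\,b*a=a\cdot Q(b)+pQ(a)\cdot b$. For $p\neq -1,0,1$, Proposition \ref{qprenov1} then gives directly that $(A,\circ)$ is a $(1-p)$-Novikov algebra.

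The Novikov identities for $*$ come from the admissible pair condition and commutativity and associativity of $\cdot$. Right commutativity is immediate: $(a*b)*c=a\cdot Q(b)\cdot Q(c)$ is symmetric in $b,c$. For left symmetry, expand
\[
a*(b*c)=a\cdot Q(b\cdot Q(c))=a\cdot Q(b)\cdot Q(c)+a\cdot b\cdot P(Q(c)).
\]
The term $a\cdot b\cdot P(Q(c))$ is symmetric in $a,b$. Hence
\[
(a*b)*c-a*(b*c)=-a\cdot b\cdot P(Q(c)),
\]
which is symmetric in $a,b$, so $(A,*)$ is pre-Lie and hence Novikov.

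The main obstacle is the case $p=-1$, which the statement allows but Proposition \ref{qprenov1} excludes. My first attempt would be to verify the two defining identities of a $(1-p)$-Novikov algebra directly for $\circ$ with general $p$, substituting $\circ=*+p\,*^{op}$. This should reproduce the computation in the proof of Proposition \ref{qprenov1}. The obstruction there was the factor $p(1-p)(1+p)$ times the Novikov right-commutativity defect, which vanishes for our $*$ regardless of $p$. The second identity, \eqref{def:q-Nov}, was verified in that proof using only the Novikov identities, not $p\neq\pm1$. So the same calculation covers all $p\neq 0,1$, and I would present it this way rather than invoking the proposition as a black box.

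For the associated Lie bracket, compute
\[
[a,b]=a\circ b-b\circ a=(1-p)(a\cdot Q(b)-Q(a)\cdot b).
\]
Applying the admissible pair identity to $Q(a\cdot b)=Q(b\cdot a)$ gives $Q(a)\cdot b+a\cdot P(b)=Q(b)\cdot a+b\cdot P(a)$. Hence $a\cdot Q(b)-Q(a)\cdot b=a\cdot P(b)-P(a)\cdot b$, which yields the second expression for $[a,b]$.
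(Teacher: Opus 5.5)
Your proof is correct. It rests on the same idea as the paper's one-line argument: $\circ$ is the $p$-algebra of the Novikov algebra $a*b=a\cdot Q(b)$.

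The paper simply cites \cite[Proposition 3.26]{LB} together with Proposition \ref{qprenov2}. You instead derive the Novikov identities for $*$ directly from the admissible-pair condition, and then run Proposition \ref{qprenov1} forward. You also observe that both computations in its proof use only the Novikov identities of $*$, so they are valid for every $p$. Those computations are the defect $p(1-p)(1+p)\bigl((c*b)*a-(c*a)*b\bigr)$ for the first identity, and the verification of Eq.~(\ref{def:q-Nov}).

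That observation is what lets your argument cover $p=-1$. Proposition \ref{qprenov1} excludes $p=-1$. Proposition \ref{qprenov2} is stated for all $p\neq 0,1$, but it assumes $\circ$ is already $(1-p)$-pre-Lie. So the paper's citation leaves implicit where the first $(1-p)$-Novikov identity comes from, whereas your route supplies it.

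For $p=-1$ you could also note a shortcut. There $\circ$ is antisymmetric, so $a\circ b=\frac12[a,b]$ with $[a,b]=2(a*b-b*a)$. Both $2$-Novikov identities then reduce to the Jacobi identity, which holds because $*$ is pre-Lie. The same fact, $[a,b]=(1-p)(a*b-b*a)$ for general $p$, gives the Lie-admissibility needed to speak of the associated Lie algebra, and you should state it explicitly.
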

\begin{proof}
    It follows from \cite[Proposition 3.26]{LB} and Proposition \ref{qprenov2} directly.
\end{proof}

The construction below is analogous to Proposition \ref{daozi}.
\begin{ex}
    For each $p\neq0$, $p\neq1$ and $p\neq -1$, let $P$ be a derivation on a commutative associative algebra $(A,\cdot)$. Define a binary operation $\circ$ on $A$ by
    \begin{eqnarray*}
        a\circ b=a\cdot P(b)+pP(a)\cdot b+\lambda \cdot a\cdot b,\;\;a,b\in A,
    \end{eqnarray*}
    where $\lambda\in{\bf k}$ or $\lambda\in A$. Then $(A,\circ)$ is a $(1-p)$-Novikov algebra. Note that in this case, $(P,P+\frac{1}{p+1}\lambda{\rm id})(\lambda\in{\bf k})$ and $(P,P+\frac{1}{p+1}L_\cdot(\lambda))(\lambda\in A)$ are admissible pairs.
\end{ex}
\begin{pro}
    For each $p\neq0$ and $p\neq1$, let $(A,\cdot)$ be a commutative associative algebra with an admissible pair $(P,Q)$ and $(V,\circ)$ be a $(1-p)$-Novikov algebra. Define a binary operation $[\cdot,\cdot]$ on $A\otimes V$ by
    \begin{eqnarray*}
        [a\otimes u,b\otimes v]:=(Q(a)\cdot b+pa\cdot Q(b))\otimes u\circ v-(a\cdot Q(b)+pQ(a)\cdot b)\otimes v\circ u,\;a,b\in A,\;u,v\in V.
    \end{eqnarray*}
    Then $(A\otimes V,[\cdot,\cdot])$ is a Lie algebra.
\end{pro}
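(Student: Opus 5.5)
The plan is to reduce the statement to Corollary \ref{construct1} by identifying $A\otimes V$ with a subquotient of a loop-type algebra, but a direct route is cleaner. We pass through the Novikov side. Let $(V,*)$ be the $(-p)$-algebra of $(V,\circ)$. By Proposition \ref{qprenov2}, $(V,*)$ is a Novikov algebra, since $(V,\circ)$ is a $(1-p)$-Novikov algebra, hence in particular $(1-p)$-pre-Lie by the first defining identity together with the Lie-admissibility used there. Since $p\neq \pm1$ is not assumed, I will avoid inverting the $p$-algebra construction in general. Instead I plan to rewrite the bracket in terms of $*$ only where convenient, and otherwise verify antisymmetry and the Jacobi identity directly.

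Antisymmetry is immediate. Swapping $a\otimes u$ and $b\otimes v$ exchanges the two terms with a sign, because the coefficient of $u\circ v$, namely $Q(a)\cdot b+pa\cdot Q(b)$, becomes under $a\leftrightarrow b$ exactly the coefficient of $v\circ u$ in the original bracket.

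For the Jacobi identity, I will follow the strategy of \cite[Proposition 3.26]{LB} and \cite{BN}. First, introduce the auxiliary operation $a\star b:=a\cdot Q(b)+pQ(a)\cdot b$ on $A$. By Proposition \ref{construct2}, $(A,\star)$ is a $(1-p)$-Novikov algebra. The bracket then reads
\[
[a\otimes u,b\otimes v]=(b\star a)\otimes u\circ v-(a\star b)\otimes v\circ u .
\]
This is the familiar "tensor product" form, analogous to the construction of a Lie algebra from the tensor product of a Novikov algebra with a "right Novikov" or commutative algebra carrying a derivation. Next, expand $[[a\otimes u,b\otimes v],c\otimes w]$ and sum cyclically. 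This produces terms of the types
\[
((b\star a)\star c)\otimes (u\circ v)\circ w,\qquad c\star(b\star a)\otimes w\circ(u\circ v),
\]
and so on. Here the computation should use the concrete form of $\star$ and the admissible pair relation $Q(a\cdot b)=Q(a)\cdot b+a\cdot P(b)$, rather than only the abstract $(1-p)$-Novikov axioms. The point is that $Q$ applied to products reduces every $A$-coefficient to a combination of monomials like $a\cdot Q(b)\cdot Q(c)$, $a\cdot b\cdot QQ(c)$, $a\cdot b\cdot PQ(c)$, and so on. Finally, group the monomials by $A$-coefficient. Each group's $V$-coefficient should be a combination that vanishes by the two $(1-p)$-Novikov identities on $V$, namely $(1-p)(u\circ(v\circ w)-v\circ(u\circ w))=[u,v]\circ w$ and $-p\,u\circ[v,w]=(1-p)((u\circ v)\circ w-(u\circ w)\circ v)$, together with Lie-admissibility of $V$.

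The main obstacle is the bookkeeping in this Jacobi expansion. Terms such as $a\cdot b\cdot QQ(c)$ versus $a\cdot b\cdot QP(c)$ or $PQ$-mixed terms must cancel with exactly the right $p$-dependent coefficients. I would first try to reduce to the case $Q=P$ shifted by a multiplication operator, in the spirit of the Example above, to see the pattern. Then I would handle the general admissible pair by organizing monomials according to which tensor factor carries the double application of $Q$, and check that each class pairs with one of the two Novikov identities.
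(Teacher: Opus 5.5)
Your proposal has a genuine gap. The Jacobi identity is the entire content of the statement, and you never verify it: you only say that after expanding and grouping by $A$-monomials ``each group's $V$-coefficient should'' vanish, and you flag this yourself as the main obstacle.

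The bookkeeping you sketch is also not a sound reduction as it stands:
\begin{itemize}
\item The monomials $Q(a)\cdot P(b)\cdot c$, $Q(a)\cdot Q(b)\cdot c$, $Q(a)\cdot b\cdot Q(c)$, \dots\ are not independent. The admissible-pair identity together with commutativity gives $(Q-P)(a)\cdot b=a\cdot (Q-P)(b)$.
\item Reducing to $Q=P+L_\cdot(\lambda)$ presupposes a unit.
\item You invoke ``Lie-admissibility of $V$'', which is not a hypothesis. In particular, $(V,\circ)$ being $(1-p)$-pre-Lie does not follow ``by the first defining identity''. The missing ingredient is $[u,v]\circ w+[v,w]\circ u+[w,u]\circ v=0$, which comes from Proposition~\ref{qpretoqnov}, and only when $p^2-p+1\neq 0$.
\end{itemize}

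The idea you are missing is one you already had and dropped for a wrong reason. No inversion of the $p$-algebra construction is needed, because regrouping gives
\[
[a\otimes u,b\otimes v]=Q(a)\cdot b\otimes (u\circ v-p\,v\circ u)-a\cdot Q(b)\otimes (v\circ u-p\,u\circ v)=Q(a)\cdot b\otimes u*v-a\cdot Q(b)\otimes v*u,
\]
with $u*v=u\circ v-p\,v\circ u$. So the bracket depends only on the $(-p)$-algebra $(V,*)$. This is exactly the paper's reduction: $(V,*)$ is Novikov by Proposition~\ref{qprenov2}, and the paper concludes by \cite[Proposition 3.33]{LB}, the known case of this tensor construction.

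Proposition~\ref{qprenov2} requires $(V,\circ)$ to be $(1-p)$-pre-Lie, so this route (and any other) must use $p^2-p+1\neq 0$; the paper also uses this tacitly. It is not a technicality. Redoing the computation of Proposition~\ref{qprenov2} with only the two $(1-p)$-Novikov identities gives
\[
(a*b)*c-a*(b*c)-(b*a)*c+b*(a*c)=\frac{p^{2}}{p-1}\big([a,b]\circ c+[b,c]\circ a+[c,a]\circ b\big).
\]
For $p^2-p+1=0$ the right-hand side need not vanish; for instance, it is nonzero in the free $(1-p)$-Novikov algebra on three generators truncated in degree $\geq 4$. Then $A=\mathbf{k}[t,t^{-1}]$ with $P=Q=\frac{d}{dt}$ reproduces the bracket of Proposition~\ref{pro-corr} for a non-Novikov $*$, and Jacobi fails. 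A direct expansion like yours therefore cannot close without this hypothesis.
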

\begin{proof}
    It follows from \cite[Proposition 3.33]{LB} and Proposition \ref{qprenov2} directly.
\end{proof}

\begin{rmk}
    Due to Proposition \ref{qpretoqnov}, one can obtain some $q$-pre-Lie algebras by Proposition \ref{construct2}.
\end{rmk}
\section{Compatible graded $q$-pre-Lie algebra structures on the Witt algebra and the Virasoro algebra}
In this section, we will give a classification of compatible graded $q$-pre-Lie algebra structures on the Witt algebra and the Virasoro algebra.

\subsection{Compatible graded $q$-pre-Lie algebra structures on the Witt algebra}

First, we review some results on the Witt algebra and its indecomposable representations. Recall \cite{Ca} that the {\bf Witt algebra} $(\mathcal{W},[\cdot,\cdot])$ is an infinite-dimensional Lie algebra over $\mathbb{C}$ with a basis $\{W_m\mid m\in \mathbb{Z} \}$ satisfying
\begin{eqnarray*}
    [W_m,W_n]=(n-m)W_{m+n},\;\;m,n\in \mathbb{Z}.
\end{eqnarray*}
It is well known that a representation of a Lie algebra $(\mathfrak{g},[\cdot,\cdot])$ is called {\bf indecomposable} if it can not be decomposed into a direct sum of two proper subrepresentations. Recall that a representation $V$ of $\mathcal{W}$ is called a {\bf weight representation} if $V=\bigoplus_{\lambda\in{\mathbb{C}}}V_\lambda$ as vector spaces, where $V_\lambda=\{v\in V\mid W_0v=\lambda v \}$ is called a {\bf weight space}.

Suppose that the vector space $V=\bigoplus_{i\in {\mathbb{Z}}}\mathbb{C}v_i$ is infinite-dimensional. Due to \cite{KS}, the following statements hold.
\begin{enumerate}
    \item[(1)] For any $\alpha\in\mathbb{C}$, $V$ is a representation of $\mathcal{W}$, where the actions are defined as follows:
    \begin{eqnarray*}
        W_mv_i=(m+i)v_{m+i},\;\;m\in\mathbb{Z},i\in\mathbb{Z}^\times\;\;\text{and}\;\; W_mv_0=m(\alpha+m)v_m,\;\;m\in\mathbb{Z}.
    \end{eqnarray*}
    We denote this representation by $V_\alpha$.
    \item[(2)] For any $\beta\in\mathbb{C}$, $V$ is a representation of $\mathcal{W}$, where the actions are defined as follows:
    \begin{eqnarray*}
        W_mv_i=iv_{m+i},\;\;m,i\in\mathbb{Z},m+i\neq0\;\; \text{and}\;\;W_iv_{-i}=-i(\beta+i)v_0,\;\;i\in\mathbb{Z}.
    \end{eqnarray*}
    We denote this representation by $V^\beta$.
    \item[(3)] For $\alpha\in\mathbb{C}$ with $0\leq{\rm Re}\alpha<1$ (where ${\rm Re}\alpha$ denotes the real part of $\alpha$), and for any $\beta\in\mathbb{C}$, $V$ is a representation of $\mathcal{W}$, where the actions are defined as follows:
    \begin{eqnarray*}
        W_mv_i=(\alpha+i+m\beta)v_{m+i},\;\;m,i\in\mathbb{Z}.
    \end{eqnarray*}
    We denote this representation by $V_{\alpha,\beta}$.
\end{enumerate}
\begin{thm}\label{isom-rep}\cite{KS}
    Let $\lambda\in\mathbb{C}$ and $V$ be an indecomposable weight representation of $\mathcal{W}$. Suppose that $V=\bigoplus_{i\in\mathbb{Z}}V_i$, where $V_i=\{v\in V\mid W_0v=(\lambda+i)v \}$ and ${\rm dim}_{\mathbb{C}}V_i=1$ for any $i\in\mathbb{Z}$. Then $V$ is isomorphic to one of $V_\alpha,V^\beta,V_{\alpha,\beta}$ as representations of $\mathcal{W}$.
\end{thm}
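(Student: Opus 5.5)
This is the Kaplansky--Santharoubane classification \cite{KS}; I would reprove it by reducing the module structure to a few scalar sequences and then solving the functional equations coming from the brackets of $\mathcal{W}$. Fix nonzero $v_i\in V_i$. Since $W_mV_i\subseteq V_{i+m}$, we can write $W_mv_i=f(m,i)v_{m+i}$ for scalars $f(m,i)$, with $f(0,i)=\lambda+i$. The module axioms then become
\[
f(m,n+i)f(n,i)-f(n,m+i)f(m,i)=(n-m)f(m+n,i).
\]
Because $\mathcal{W}$ is generated by $W_{\pm1},W_{\pm2}$, the whole action is determined by $f(\pm1,i)$ and $f(\pm2,i)$. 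The plan is to pin these down up to rescaling of the $v_i$.

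\emph{Step 1: the $\mathfrak{sl}_2$ data.} Set $a_i=f(1,i)$ and $b_i=f(-1,i+1)$. The relation $[W_{-1},W_1]=2W_0$ gives
\[
a_{i-1}b_{i-1}-a_ib_i=2(\lambda+i).
\]
Hence $a_ib_i=c-(\lambda+i)(\lambda+i+1)$ for a constant $c$. Write this quadratic in $i$ as $-(\alpha'+i)(\alpha'+i+1-\gamma)\cdots$, that is, factor it as $(\alpha+i+\beta)\,(\beta-\alpha-i-1)$ up to sign conventions. This introduces parameters $\alpha,\beta$ with $\lambda=\alpha$ modulo shifting the index. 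Replacing $\lambda$ by $\lambda+1$ shifts the indexing, so we may normalize $0\le{\rm Re}\,\alpha<1$.

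\emph{Step 2: the generic case.} Suppose $a_ib_i\neq0$ for all $i$. Rescaling the $v_i$ successively, we may take $a_i=\alpha+i+\beta$ (or $a_i=1$ where this vanishes), and then $b_i$ is forced. Next we determine $f(\pm2,i)$ from $[W_{-1},W_2]=3W_1$ and $[W_1,W_{-2}]=-3W_{-1}$. These are linear equations for $f(2,i)$ and $f(-2,i)$ whose coefficients are the nonzero $b$'s and $a$'s, so the solution is unique once it exists. The formulas of $V_{\alpha,\beta}$ provide a solution, so the module is $V_{\alpha,\beta}$.

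\emph{Step 3: the degenerate case, which I expect to be the main obstacle.} Suppose some $a_ib_i=0$. The quadratic has at most two integral roots, so this happens only when $\alpha$ is an integer (normalize $\alpha=0$) and at one or two indices near $i=0$. There $W_1$ or $W_{-1}$ kills the connection between $V_{i}$ and $V_{i+1}$. I would first use indecomposability to show the module cannot split at these indices. If both $W_{\pm1}$ and $W_{\pm2}$ vanished across a gap, then $\bigoplus_{j\le i}V_j$ and $\bigoplus_{j>i}V_j$ would be submodules and $V$ would decompose. So some $f(\pm2,\cdot)$ across the gap is nonzero, which forces the weight-$0$ vector $v_0$ to be attached asymmetrically.

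Then I would split according to whether $W_{\pm1}$ maps into $v_0$ but not out of it, or out of it but not into it. Solving the same $W_{\pm2}$ relations in each sub-case, with $f(\pm1,\cdot)$ now partially zero, leaves one free scalar. In the first sub-case this scalar is $\alpha$ in $W_mv_0=m(\alpha+m)v_m$, giving $V_\alpha$. In the second it is $\beta$ in $W_iv_{-i}=-i(\beta+i)v_0$, giving $V^\beta$.

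The delicate point is bookkeeping: one must check that the zero pattern of $a_i,b_i$ compatible with indecomposability is exactly one of these two shapes, or the generic shape when $\alpha=0$ but all products are nonzero. One must also check that the remaining relations $[W_2,W_{-2}]=-4W_0$ and $[W_1,W_2]=W_3$ impose no further constraint beyond fixing the single free parameter. I would verify this last point by direct substitution into the relation displayed above for small $m,n$, which suffices by the generation statement.
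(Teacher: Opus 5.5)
The paper does not prove this statement; it is quoted from \cite{KS}. Your reconstruction has genuine gaps, and they occur exactly where the substance of that theorem lies. The first is that Step~2 is not a uniqueness argument. Specialize your (correct) general identity to $(m,n)=(-1,2)$ and $(1,-2)$. This gives $b_{i+1}f(2,i)-b_{i-1}f(2,i-1)=3a_i$ and $a_{i-2}f(-2,i)-a_if(-2,i+1)=-3b_{i-1}$. These are first-order recurrences, so even with all coefficients nonzero each has a one-dimensional space of homogeneous solutions. These are spanned by $1/(b_ib_{i+1})$ and $1/(a_{i-1}a_{i-2})$ respectively. Hence $f(\pm2,\cdot)$ is determined only up to two free constants $\kappa,\kappa'$, not uniquely.

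This freedom is real. $V_{\alpha,\beta}$ and $V_{\alpha,1-\beta}$ have the same products $a_ib_i=(\alpha+i+\beta)(\alpha+i+1-\beta)$. So after normalizing $W_{\pm1}$ the same way, they differ in $f(2,\cdot)$. To close the generic case you need a further relation, e.g.\ $[W_{-2},W_2]=4W_0$. Put $x=\alpha+i$, $a_i=x+\beta$, $b_i=x+1-\beta$, and write $f(2,i)=x+2\beta+\kappa/((x+1-\beta)(x+2-\beta))$ and $f(-2,i)=x-2\beta+\kappa'/((x-1+\beta)(x-2+\beta))$. Then that relation is equivalent to $\kappa'=-\kappa$ and $\kappa\bigl(\kappa-2\beta(1-\beta)(2\beta-1)\bigr)=0$. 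These two solutions are exactly $V_{\alpha,\beta}$ and $V_{\alpha,1-\beta}$. (Also, $[W_{-1},W_1]=2W_0$ gives $a_ib_i-a_{i-1}b_{i-1}=2(\lambda+i)$; your displayed equation has the opposite sign.)

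The second gap is that Step~3 misidentifies the degenerate case. With the correct factorization, some $a_ib_i$ vanishes iff $\alpha+\beta\in\mathbb{Z}$ or $\alpha-\beta\in\mathbb{Z}$. This neither forces $\alpha\in\mathbb{Z}$ nor places the zeros near $i=0$. More seriously, the degenerate case contains many modules $V_{\alpha,\beta}$, not only $V_\alpha$ and $V^\beta$:
\begin{itemize}
\item $V_{\alpha,-\alpha}$ has $W_1v_0=0$ for every $\alpha$.
\item For $\beta\in\mathbb{Z}\setminus\{0,1\}$, $V_{0,\beta}$ is irreducible although $W_1v_{-\beta}=0$ and $W_{-1}v_{\beta}=0$. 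It is not isomorphic to any $V_{\alpha'}$: there nothing maps into $v_0$, whereas here $W_mv_{-m}=m(\beta-1)v_0$. Nor is it isomorphic to any $V^{\beta'}$: there $W_mv_0=0$ for $m\neq0$, whereas here $W_mv_0=m\beta v_m$.
\end{itemize}
So your dichotomy ``degenerate $\Rightarrow$ $V_\alpha$ or $V^\beta$'' is false. In these cases the recurrences above have vanishing coefficients, so $f(\pm2,\cdot)$ picks up extra freedom and the rescaling along $W_1$ breaks. The real work is deciding which zero patterns of $f(\pm1,\cdot),f(\pm2,\cdot)$ are compatible with indecomposability, and what each pattern forces. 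Your proposal only announces this case analysis, and as written it would not produce the stated classification.
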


Next, we investigate the compatible $q$-pre-Lie algebra structures on $\mathcal{W}$ satisfying
\begin{eqnarray}\label{graded-circ}
    W_m\circ W_n=\varphi(m,n)W_{m+n},\;\;m,n\in\mathbb{Z},
\end{eqnarray}
where $\varphi:\mathbb{Z}\times\mathbb{Z}\to\mathbb{C}$ is a complex-valued function. We denote this $q$-pre-Lie algebra by $(S,\circ)$.
\begin{lem}\label{lem-phi}
    $(S,\circ)$ is a compatible $q$-pre-Lie algebra structure on $\mathcal{W}$ if and only if $\varphi$ satisfies
    \begin{eqnarray}
        \label{phi1}
        &&\varphi(m,n)-\varphi(n,m)=n-m,   \\
        \label{phi2}
        &&(n-m)\varphi(m+n,l)=q(\varphi(n,l)\varphi(m,n+l)-\varphi(m,l)\varphi(n,m+l)),\;\;m,n,l\in\mathbb{Z}.
    \end{eqnarray}
\end{lem}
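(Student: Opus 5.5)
Compatibility means two things: the commutator of $\circ$ reproduces the Witt bracket, and $(S,\circ)$ is a $q$-pre-Lie algebra, i.e. it satisfies Eqs. (\ref{q-pre-Lie-1}) and (\ref{q-pre-Lie-2}). Since everything is defined on the basis $\{W_m\}$, the plan is to translate each requirement into a condition on the structure function $\varphi$ by evaluating on basis elements and comparing coefficients of $W_{m+n}$ or $W_{m+n+l}$.

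\emph{Step 1 (commutator).} From Eq. (\ref{graded-circ}) we get $W_m\circ W_n-W_n\circ W_m=(\varphi(m,n)-\varphi(n,m))W_{m+n}$. Matching this with $[W_m,W_n]=(n-m)W_{m+n}$ gives exactly Eq. (\ref{phi1}).

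\emph{Step 2 (Eq. (\ref{q-pre-Lie-1})).} Assuming Eq. (\ref{phi1}), take $a=W_m$, $b=W_n$, $c=W_l$. Then
\[
q(W_m\circ(W_n\circ W_l)-W_n\circ(W_m\circ W_l))=q(\varphi(n,l)\varphi(m,n+l)-\varphi(m,l)\varphi(n,m+l))W_{m+n+l},
\]
while $[W_m,W_n]\circ W_l=(n-m)\varphi(m+n,l)W_{m+n+l}$. Comparing coefficients gives Eq. (\ref{phi2}).

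\emph{Step 3 (Eq. (\ref{q-pre-Lie-2})).} This is the only point needing care. Once Eq. (\ref{phi1}) holds, the commutator is the Witt bracket, which is already a Lie bracket, so Jacobi is available. We split into cases.
\begin{itemize}
\item If $q=1$, Eq. (\ref{q-pre-Lie-2}) is vacuous.
\item If $q\neq0$, the computation in the proof of Proposition \ref{1/qLcirc} shows, using only Eq. (\ref{q-pre-Lie-1}),
\[
(1-q)\big([a,b]\circ c+[b,c]\circ a+[c,a]\circ b\big)=q\big([a,[b,c]]-[[a,b],c]-[b,[a,c]]\big)=0,
\]
so Eq. (\ref{q-pre-Lie-2}) follows from Eq. (\ref{phi2}) together with the Jacobi identity.
\item If $q=0$, Eq. (\ref{q-pre-Lie-2}) is automatic by Remark (1), since Eq. (\ref{q-pre-Lie-1}) then forces $[a,b]\circ c=0$.
\end{itemize}

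For the converse direction, Eqs. (\ref{phi1}) and (\ref{phi2}) were derived directly from compatibility, so nothing further is needed. The main obstacle is Step 3, namely making sure Eq. (\ref{q-pre-Lie-2}) imposes no extra constraint on $\varphi$, and the Jacobi argument above handles it.
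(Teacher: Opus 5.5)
Your proof is correct and follows the paper's approach: the paper only says ``It is straightforward,'' and what it means is this comparison of coefficients on the basis $\{W_m\}$, which turns the commutator condition into Eq.~(\ref{phi1}) and Eq.~\eqref{q-pre-Lie-1} into Eq.~(\ref{phi2}). Your Step~3 supplies the point the paper leaves implicit: Eq.~\eqref{q-pre-Lie-2} imposes no extra condition, because the identity from the proof of Proposition~\ref{1/qLcirc} combined with the Jacobi identity of $\mathcal{W}$ gives it. That identity actually holds for every $q$, so your case split on $q$ is unnecessary, though it does no harm.
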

\begin{proof}
    It is straightforward.
\end{proof}
\begin{rmk}
    By Eq. (\ref{phi2}), it is clear that when $q=0$, there are no compatible $q$-pre-Lie algebra structures on $\mathcal{W}$.
\end{rmk}

Based on Eqs. (\ref{phi1}) and (\ref{phi2}), we get the following simple observations.
\begin{enumerate}
    \item[(1)] Let $l=0$ in Eq. (\ref{phi2}). Then we obtain
    \begin{eqnarray}\label{ob1}
        (n-m)\varphi(m+n,0)=q(\varphi(n,0)\varphi(m,n)-\varphi(m,0)\varphi(n,m)),\;\;m,n\in\mathbb{Z}.
    \end{eqnarray}
    \item[(2)] Let $l=1$ in Eq. (\ref{phi2}). Then we obtain
    \begin{eqnarray}\label{ob2}
        (n-m)\varphi(m+n,1)=q(\varphi(n,1)\varphi(m,n+1)-\varphi(m,1)\varphi(n,m+1)),\;\;m,n\in\mathbb{Z}.
    \end{eqnarray}
    \item[(3)] Let $l=2$ in Eq. (\ref{phi2}). Then we obtain
    \begin{eqnarray}\label{ob3}
        (n-m)\varphi(m+n,2)=q(\varphi(n,2)\varphi(m,n+2)-\varphi(m,2)\varphi(n,m+2)),\;\;m,n\in\mathbb{Z}.
    \end{eqnarray}
    \item[(4)] Let $m=0$ in Eq. (\ref{phi2}). Then we obtain
    \begin{eqnarray}\label{ob4}
        (q\varphi(0,n+l)-q\varphi(0,l)-n)\varphi(n,l)=0,\;\;n,l\in\mathbb{Z}.
    \end{eqnarray}
    \item[(5)] Let $m=l=0$ in Eq. (\ref{phi2}). Then we obtain
    \begin{eqnarray}\label{ob5}
        (q\varphi(0,n)-q\varphi(0,0)-n)\varphi(n,0)=0,\;\;n\in\mathbb{Z}.
    \end{eqnarray}
    By Eq. (\ref{ob5}), we get
    \begin{eqnarray*}
        \varphi(n,0)=0\;\;\text{or}\;\;q\varphi(0,0)+n=q\varphi(0,n),\;\;n\in\mathbb{Z}.
    \end{eqnarray*}
    By Eq. (\ref{phi1}), we get
    \begin{eqnarray*}
        \varphi(n,0)=0\;\;\text{or}\;\;q\varphi(0,0)+(1-q)n=q\varphi(n,0),\;\;n\in\mathbb{Z}.
    \end{eqnarray*}
\end{enumerate}

Set
\begin{eqnarray*}
    \Gamma_1=\{m\in\mathbb{Z}\mid \varphi(m,0)=0 \}, \;\;\Gamma_2=\{m\in\mathbb{Z}\mid q\varphi(0,0)+(1-q)m=q\varphi(m,0) \}.
\end{eqnarray*}
Clearly, by Eq. (\ref{ob1}), we obtain $m+n\in\Gamma_1$ for any $m,n\in\Gamma_1$ with $m\neq n$.

Similar to \cite[Corollary 3.4]{BG}, we can obtain the following conclusion.

\begin{pro}\label{formula}
    For each $q\neq1$, we have $q\varphi(0,0)+(1-q)m=q\varphi(m,0)$ for any $m\in\mathbb{Z}$.
\end{pro}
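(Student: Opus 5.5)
Throughout, set $\lambda:=q\varphi(0,0)$. The plan is to read $\varphi$ through the representation $(\mathcal{W},qL_\circ)$ of Proposition \ref{1/qLcirc}, exactly as in \cite[Corollary 3.4]{BG}. By Eq. (\ref{phi1}), $W_0$ acts on $W_k$ by $q\varphi(0,k)=q\varphi(k,0)+qk$. So each $W_k$ is a weight vector with weight
\[
w(k):=q\varphi(k,0)+qk .
\]
By observation (5), every $k$ lies in $\Gamma_1\cup\Gamma_2$. For $k\in\Gamma_2$ we get $w(k)=\lambda+k$, and for $k\in\Gamma_1$ we get $w(k)=qk$. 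The claim is equivalent to $w(k)=\lambda+k$ for all $k$. Let $B$ be the set of $k$ with $w(k)\neq\lambda+k$. Then $B\subseteq\Gamma_1\setminus\Gamma_2$, $w(k)=qk$ on $B$, and $0\notin B$. We want to show $B=\emptyset$.

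\emph{Step 1 (weight-shift rule).} Eq. (\ref{ob4}) says that $\varphi(n,l)\neq0$ forces $w(n+l)=w(l)+n$. By Eq. (\ref{phi1}), for $n\neq l$ at least one of $\varphi(n,l)$, $\varphi(l,n)$ is nonzero. Hence
\[
w(n+l)\in\{\,w(l)+n,\; w(n)+l\,\},\qquad n\neq l .
\]

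\emph{Step 2 (first consequences).} Take $m\in B$ and the pair $(m,-m)$. Then $\lambda=w(0)$ must equal $w(-m)+m$ or $qm-m$. The second option contradicts $m\in B$, so $w(-m)=\lambda-m$, i.e. $-m\notin B$. Next, for $k\notin B$ with $k\neq m$, the rule gives $w(m+k)\in\{\lambda+k+m,\; qm+k\}$. The second value is never $\lambda+m+k$, since that would force $qm=\lambda+m$. So either $m+k\notin B$, or $m+k\in B$ and $q(m+k)=qm+k$; the latter forces $k=0$ because $q\neq1$. Hence $m+k\notin B$ for every $k\notin B$ with $k\neq 0,m$.

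Applying this with $k=-m+j$ for $j$ ranging over $\mathbb{Z}\setminus B$ propagates membership in the complement. For instance, $k=-2m$ gives $-m\notin B$ again. With a second element $m'\in B$, $m'\neq m$, the closure property of $\Gamma_1$ (recorded after Eq. (\ref{ob5})) gives $m+m'\in\Gamma_1$. This should make $B$ very thin, essentially at most one element $m_0$, with every other integer in the complement.

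\emph{Step 3 (killing the last element).} Suppose $B=\{m_0\}$. Use Step 2 with $k=j-m_0$ for all $j\neq 0,m_0,2m_0$: this is consistent, so the shift rule alone is not enough. Here I go back to the full quadratic relation Eq. (\ref{phi2}), which says $(\mathcal{W},qL_\circ)$ is a genuine module and not just weight-compatible. Away from $m_0$, Eq. (\ref{ob4}) and Eq. (\ref{phi1}) force $\varphi(n,l)=\frac1q(\lambda+l+(1-q)n)$ generically. Substituting into Eq. (\ref{phi2}) with $m+n=m_0$ and $l=0$, i.e. Eq. (\ref{ob1}), gives
\[
0=(n-m)\varphi(m_0,0)=q\big(\varphi(n,0)\varphi(m,n)-\varphi(m,0)\varphi(n,m)\big).
\]
The right-hand side is an explicit nonzero polynomial in $m$ whenever $q\neq1$ and $\lambda\neq(1-q)m_0$. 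Taking suitable $m$ therefore yields a contradiction, and so $B=\emptyset$.

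\emph{Main obstacle.} I expect the hard part to be Step 2/3: the case analysis that pins down $\varphi(n,l)$ off $B$ with enough nonvanishing to plug into Eq. (\ref{ob1}). There may be small exceptional values of $\lambda$, or of $q$ such as $q=0$ or $q=2$, where several options coincide. The first thing I would try is the indices $l\in\{0,1,2\}$, i.e. Eqs. (\ref{ob1})--(\ref{ob3}), as \cite{BG} does.
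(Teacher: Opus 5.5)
Your Steps 1 and 2 are correct. The weight $w(k)=q\varphi(0,k)$, the dichotomy $w(k)\in\{\lambda+k,\,qk\}$, the rule $w(n+l)\in\{w(l)+n,\,w(n)+l\}$ for $n\neq l$, and the consequences ($-m\notin B$, and $m+k\notin B$ for $m\in B$, $k\notin B$, $k\neq 0$) all check out.

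\textbf{Where the argument fails.} The gap is Step 3, together with the unproved ``$B$ should be very thin'' in Step 2.
\begin{itemize}
\item Your claim that the shift rule alone cannot exclude $B=\{m_0\}$ is false. You only tested pairs containing $m_0$.
\item Your claim that Eq.~(\ref{ob4}) and Eq.~(\ref{phi1}) force $\varphi(n,l)=\frac1q(\lambda+l+(1-q)n)$ ``generically'' is unjustified. Eq.~(\ref{ob4}) reads $(w(n+l)-w(l)-n)\varphi(n,l)=0$, so it only forces zeros when the weights mismatch. It says nothing about the value of $\varphi(n,l)$ when they match, and Eq.~(\ref{phi1}) only fixes $\varphi(n,l)-\varphi(l,n)$.
\item That formula is the content of Theorem~\ref{thm-gra-q}. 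Its proof uses the present proposition and the Kaplansky--Santharoubane classification, so invoking it here is circular.
\item The entries you substitute into Eq.~(\ref{ob1}) are $\varphi(m,n)$ and $\varphi(n,m)$ with $m+n=m_0$ and $m,n\neq 0$. For these, the target weight is $\lambda+m_0\neq qm_0=w(m_0)$, so Eq.~(\ref{ob4}) forces them to vanish. The formula is therefore wrong exactly where you use it.
\end{itemize}

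\textbf{The missing idea.} Apply your rule to a pair of \emph{good} indices whose sum is bad. Let $m\in B$; then $m\neq 0$, so $2m\neq -m$.
\begin{itemize}
\item By Step 2, $w(-m)=\lambda-m$.
\item The rule for the pair $(2m,-m)$ gives $qm=w(m)\in\{\lambda+m,\;w(2m)-m\}$. Since $qm\neq\lambda+m$, we get $w(2m)=(q+1)m$.
\item But $w(2m)\in\{\lambda+2m,\;2qm\}$. The first option gives $qm=\lambda+m$, contradicting $m\in B$. The second gives $(1-q)m=0$, so $m=0$ because $q\neq1$, contradicting $0\notin B$.
\end{itemize}
Hence $B=\emptyset$, which is exactly the proposition. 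Equivalently, the good set $\mathbb{Z}\setminus B$ is closed under sums of distinct elements, and $m=2m+(-m)$.

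This uses no case analysis on $\lambda$, no Eqs.~(\ref{ob1})--(\ref{ob3}), and no worry about special values such as $q=0$ or $q=2$; only $q\neq 1$ is needed. It also makes the ``$|B|\le 1$'' reduction unnecessary. That reduction does follow from your Step 2 rules, which you did not show: if $m\neq m'$ lie in $B$, then $m'-m=m'+(-m)\notin B$, and then $m'=m+(m'-m)\notin B$.
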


\begin{lem}\label{lem-weight rep}
    For each $q\neq0,1$, suppose that $(S,\circ)$ is a compatible graded $q$-pre-Lie algebraic structure on $\mathcal{W}$. Define a linear map $\rho:\mathcal{W}\to \mathfrak{gl}(S)$ by
    \begin{eqnarray}\label{maprho}
        \rho(W_m)W_n=qW_m\circ W_n=q\varphi(m,n)W_{m+n},\;\;m,n\in\mathbb{Z}.
    \end{eqnarray}
    Then $(S,\rho)$ is a weight representation of $\mathcal{W}$ whose nonzero weight spaces are all one-dimensional.
\end{lem}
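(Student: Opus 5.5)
By Proposition \ref{1/qLcirc}, $(S,qL_\circ)$ is a representation of the associated Lie algebra, which is $\mathcal{W}$ because $\circ$ is compatible. Since $\rho(W_m)=qL_\circ(W_m)$, the map $\rho$ is therefore a representation of $\mathcal{W}$ on $S$. One can also check this directly from Lemma \ref{lem-phi}: Eq. (\ref{phi2}), multiplied by $q$, is exactly the identity $\rho(W_m)\rho(W_n)W_l-\rho(W_n)\rho(W_m)W_l=(n-m)\rho(W_{m+n})W_l$. What remains is to describe the action of $W_0$.

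The plan is to compute $\rho(W_0)W_n=q\varphi(0,n)W_n$. This shows that each $W_n$ is an eigenvector of $\rho(W_0)$ with eigenvalue $\mu_n:=q\varphi(0,n)$. Because $\{W_n\}$ is a basis of $S$, this gives $S=\bigoplus_\mu S_\mu$, so $S$ is a weight representation.

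To identify the weights, use Eq. (\ref{phi1}) with $m=0$, which gives $\varphi(0,n)=\varphi(n,0)+n$. Since $q\neq 1$, Proposition \ref{formula} gives $q\varphi(n,0)=q\varphi(0,0)+(1-q)n$. Combining the two,
\begin{eqnarray*}
\mu_n=q\varphi(n,0)+qn=q\varphi(0,0)+n.
\end{eqnarray*}
Setting $\lambda:=q\varphi(0,0)$, we get $\mu_n=\lambda+n$. These values are pairwise distinct as $n$ ranges over $\mathbb{Z}$. Hence the weight space of weight $\lambda+n$ is exactly $\mathbb{C}W_n$, which is one-dimensional, and every other weight space is zero. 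This also puts $S$ in the form required by Theorem \ref{isom-rep}, with $V_i=\mathbb{C}W_i$.

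The main point that needs care is the weight formula $\mu_n=\lambda+n$. It depends on Proposition \ref{formula}, which is where the hypothesis $q\neq1$ enters. The hypothesis $q\neq0$ is needed so that $\rho$ is not identically zero and Proposition \ref{1/qLcirc} applies in its iff form. Without Proposition \ref{formula}, different $n$ could share the same eigenvalue, and the weight spaces could fail to be one-dimensional. Everything else in the argument is a direct substitution into Eqs. (\ref{phi1}) and (\ref{phi2}).
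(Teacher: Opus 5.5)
Your proof is correct and is essentially the argument the paper intends; the paper itself only defers to \cite[Lemma 3.5]{BG}. You get that $\rho=qL_\circ$ is a representation from Proposition \ref{1/qLcirc}, and Eq. (\ref{phi1}) together with Proposition \ref{formula} gives $\rho(W_0)W_n=(q\varphi(0,0)+n)W_n$, so the weights are pairwise distinct and each weight space is $\mathbb{C}W_n$. One small correction: the first assertion of Proposition \ref{1/qLcirc} holds for every $q$, so $q\neq 0$ is not needed at that step, and by the Remark after Lemma \ref{lem-phi} no compatible structure exists for $q=0$ anyway.
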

\begin{proof}
    The proof is similar to \cite[Lemma 3.5]{BG}.
\end{proof}

\begin{pro}\label{assump}
    Under the assumptions and notations of Lemma \ref{lem-weight rep}, $(S,\rho)$ is an indecomposable representation of $\mathcal{W}$. Furthermore, $(S,\rho)$ is isomorphic to one of $V_\alpha,V^\beta,V_{\alpha,\beta}$.
\end{pro}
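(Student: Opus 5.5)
Once indecomposability is shown, the second claim follows from Theorem \ref{isom-rep}. By Lemma \ref{lem-weight rep}, $(S,\rho)$ is a weight representation. Its weight spaces are $S_i=\mathbb{C}W_i$. Indeed, by Proposition \ref{formula}, $\rho(W_0)W_n=q\varphi(0,n)W_n=(q\varphi(0,0)+n)W_n$, because $q\varphi(0,n)=q\varphi(n,0)+qn=q\varphi(0,0)+(1-q)n+qn$ by Eq. (\ref{phi1}). Setting $\lambda=q\varphi(0,0)$, we get $S_i=\{v\mid \rho(W_0)v=(\lambda+i)v\}=\mathbb{C}W_i$, and each has dimension one. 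So everything reduces to proving indecomposability.

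For indecomposability, suppose $S=U_1\oplus U_2$ with $U_1,U_2$ nonzero proper subrepresentations. Since $\rho(W_0)$ acts on $W_i$ with pairwise distinct eigenvalues $\lambda+i$, every subrepresentation is a sum of weight spaces. Hence $\mathbb{Z}=I_1\sqcup I_2$ with $I_1,I_2$ nonempty and $U_k=\bigoplus_{i\in I_k}\mathbb{C}W_i$. The key tool is the compatibility $\varphi(m,n)-\varphi(n,m)=n-m$ from Eq. (\ref{phi1}). Take $m\in I_1$ and $n\in I_2$ with $m\neq n$. Then $W_m\circ W_n=\varphi(m,n)W_{m+n}$, and this lies in $U_2$ because $U_2$ is $\rho$-stable. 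Likewise $W_n\circ W_m=\varphi(n,m)W_{m+n}$ lies in $U_1$. Since $W_{m+n}$ lies in exactly one of $U_1$, $U_2$, one of $\varphi(m,n)$, $\varphi(n,m)$ must vanish. The other then equals $\pm(n-m)\neq0$, and it places $m+n$ in $I_2$ or in $I_1$ accordingly. So we get a precise rule: for $m\in I_1$ and $n\in I_2$, either $\varphi(m,n)=0$ and $m+n\in I_1$, or $\varphi(n,m)=0$ and $m+n\in I_2$.

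Next I would combine this rule with Proposition \ref{formula}, which gives $\varphi(m,0)=\varphi(0,0)+\frac{1-q}{q}m$ and $\varphi(0,m)=\varphi(0,0)+\frac{1}{q}m$. These are affine functions of $m$, so each vanishes for at most one $m$, and not both at the same $m$ unless $m=0$. Assume without loss of generality that $0\in I_1$. Then for every $n\in I_2$ the rule applies with $m=0$: either $\varphi(0,n)=0$ and $n\in I_1$, which is impossible, or $\varphi(n,0)=0$ and $n\in I_2$. Hence $\varphi(n,0)=0$ for all $n\in I_2$. Because $q\neq1$, the function $\varphi(\cdot,0)$ vanishes at most once, so $I_2=\{n_0\}$ is a single point with $n_0=-q\varphi(0,0)/(1-q)$, and $n_0\neq0$.

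Finally I must rule out $I_2=\{n_0\}$. Apply the rule to the pair $m\in I_1\setminus\{0,n_0\}$ and $n=n_0$. Since $m+n_0\neq n_0$ for $m\neq0$, the sum lies in $I_1$, so $\varphi(m,n_0)=0$ for all $m\neq0$ (note $m\neq n_0$ automatically). Then I use Eq. (\ref{phi2}) with $l=n_0$, together with Eq. (\ref{phi1}), to reach a contradiction. For instance, $\varphi(n_0,m)=m-n_0$, and evaluating Eq. (\ref{phi2}) at suitable $m,n$ with $m+n\neq0$ gives $(n-m)\varphi(m+n,n_0)=0$ on the left, which must be compared with $q$ times a product involving a $\varphi(\cdot,n_0)$ factor. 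The cleaner route is Eq. (\ref{ob4}) with $l=n_0$: it gives $(q\varphi(0,n+n_0)-q\varphi(0,n_0)-n)\varphi(n,n_0)=0$, which is vacuous here, so I would instead take $n=n_0$, $l=m$ in Eq. (\ref{ob4}). This forces $q\varphi(0,n_0+m)-q\varphi(0,m)=n_0$ whenever $\varphi(n_0,m)=m-n_0\neq0$. That is consistent with the affine formula, so it gives no contradiction. The contradiction should instead come from Eq. (\ref{phi2}) with $(m,n,l)=(m,n_0-m,\ldots)$, arranged so that a factor $\varphi(\cdot,n_0)$ is forced to be both zero and nonzero. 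I expect this last step to be the main obstacle, and I would model it on \cite[Proposition 3.6]{BG}.
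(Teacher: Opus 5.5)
Your argument is correct up to the reduction $I_2=\{n_0\}$ with $n_0\neq 0$. The weight-space computation is valid, and so is the rule for $m\in I_1$, $n\in I_2$. So is the use of Proposition~\ref{formula}: $\varphi(\cdot,0)$ must vanish on all of $I_2$, and it is affine with nonzero slope. But the proof then stops short. Your substitutions into Eq.~(\ref{ob4}) only return relations consistent with the affine formula. The closing appeal to Eq.~(\ref{phi2}) ``arranged so that a factor is forced to be both zero and nonzero'' is not an argument. As written, the case of a one-point $I_2$ is never excluded, so indecomposability is not established.

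The missing step is short.

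\begin{enumerate}
\item Since $U_2=\mathbb{C}W_{n_0}$ is $\rho$-stable and $m+n_0\neq n_0$ for $m\neq0$, stability alone gives $\varphi(m,n_0)=0$ for \emph{every} $m\neq0$. This includes $m=n_0$, which your rule (stated only for $m\in I_1$) does not cover, and which is needed when $n_0=\pm1$.
\item Take $(m,n,l)=(-1,1,n_0)$ in Eq.~(\ref{phi2}); this is the paper's Eq.~(\ref{ob6}) with $r=n_0$. The right-hand side $q(\varphi(1,n_0)\varphi(-1,n_0+1)-\varphi(-1,n_0)\varphi(1,n_0-1))$ vanishes, so $\varphi(0,n_0)=0$.
\item But $\varphi(n_0,0)=0$ and Eq.~(\ref{phi1}) give $\varphi(0,n_0)=n_0\neq0$, a contradiction.
\end{enumerate}

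Conceptually, $\mathcal{W}$ is perfect ($[W_{-1},W_1]=2W_0$), so the one-dimensional subrepresentation $\mathbb{C}W_{n_0}$ must be trivial. Yet $W_0$ acts on it by $q\varphi(0,n_0)=qn_0\neq0$. You already noted that $\varphi(\cdot,0)$ and $\varphi(0,\cdot)$ cannot vanish simultaneously away from $0$; you just did not connect it to this.

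With this step added, your route is considerably shorter than the paper's. The paper takes the minimal positive $r\in B_2$ and proves six claims; its Claim~5 is your rule and its Claim~1 your affine formula. It derives $r+1\in B_2$ from Eq.~(\ref{ob6}), then runs separate numerical case analyses for $r=1$ and $r>1$. Your observation shows that Claim~1, together with $\varphi(n,0)=0$ for all $n\in B_2$, already forces $B_2=\{r\}$. Claim~6 alone then yields the contradiction, so the case analysis is unnecessary.
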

\begin{proof}
    Suppose that $(S,\rho)$ is decomposable, i.e., there exists two nonzero proper subrepresentations $S_1$ and $S_2$ of $S$ such that $S=S_1\oplus S_2$. Since any subrepresentation of a weight representation of $\mathcal{W}$ is also a weight representation, there exists two proper subsets $B_1, B_2$ of $\mathbb{Z}$ such that $B_1\cap B_2=\varnothing$, $B_1\cup B_2=\mathbb{Z}$ and $S_i=\bigoplus_{m\in B_i}\mathbb{C}W_m$ for any $i\in\{1,2\}$. Without loss of generality, we assume that $W_0\in S_1$, i.e., $0\in B_1$.

    Next, we will prove that $\mathbb{Z}_+\subseteq B_1$.

    Suppose that there exists a positive integer in $B_2$. Let $r$ be the minimal positive integer in $B_2$.

    {\bf Claim 1:} $\varphi(m,0)=(1-\frac{1}{q})(r-m)$, $\varphi(0,m)=(1-\frac{1}{q})r+\frac{1}{q}m$ for any $m\in\mathbb{Z}$.

    Since $W_0\in S_1$ and $W_r\in S_2$, we have $\rho(W_r)W_0=q\varphi(r,0)W_r\in S_1\cap S_2=0$ which implies $\varphi(r,0)=0$. By Proposition \ref{formula}, $\varphi(0,0)=(1-\frac{1}{q})r+\varphi(r,0)=(1-\frac{1}{q})r$ and $\varphi(m,0)=\varphi(0,0)+(\frac{1}{q}-1)m=(1-\frac{1}{q})(r-m)$ for any $m\in\mathbb{Z}$. It follows from Eq. (\ref{phi1}) that $\varphi(0,m)=\varphi(m,0)+m=(1-\frac{1}{q})r+\frac{1}{q}m$ for any $m\in\mathbb{Z}$.

    {\bf Claim 2:} $m+n\in B_2$ for any $m,n\in B_2$ with $m\neq n$.

    Let $m,n\in B_2$ and $m\neq n$. By Eq. (\ref{phi1}), we have $\varphi(m,n)\neq0$ or $\varphi(n,m)\neq0$. So $\rho(W_m)W_n=q\varphi(m,n)W_{m+n}\neq0$ or $\rho(W_n)W_m=q\varphi(n,m)W_{m+n}\neq0$. Since $W_m,W_n\in S_2$ and $S_2$ is a subrepresentation of $S$, $W_{m+n}\in S_2$. Hence $m+n\in B_2$.

    {\bf Claim 3:} If $m\in B_2$, then $-m\in B_1$.

    Assume that $-m\in B_2$, then $0=m+(-m)\in B_2$, which is a contradiction.

    {\bf Claim 4:} Let $m\in B_i$ ($i=1,2$). For any $n\in \mathbb{Z}$, if $\varphi(n,m)\neq0$, then $m+n\in B_i$.

    By Eq. (\ref{maprho}), we obtain $\rho(W_n)W_m=q\varphi(n,m)W_{m+n}\in S_i$. Then $W_{m+n}\in S_i$, which implies $m+n\in B_i$.

    {\bf Claim 5:} Let $m\in B_2$ and $n\in B_1$. Then $m+n\in B_1$ if and only if $\varphi(n,m)=0$, $\varphi(m,n)=n-m$, and $m+n\in B_2$ if and only if $\varphi(m,n)=0$, $\varphi(n,m)=m-n$.

    It follows from Eqs. (\ref{maprho}) and (\ref{phi1}) directly.

    {\bf Claim 6:} $r+1\in B_2$.

    Let $m=-n=-1$, $l=r$ in Eq. (\ref{phi2}). Then we obtain
    \begin{eqnarray}\label{ob6}
        2\varphi(0,r)=q(\varphi(1,r)\varphi(-1,1+r)-\varphi(-1,r)\varphi(1,-1+r)).
    \end{eqnarray}
    Since $\varphi(0,r)=r\neq0$, $\varphi(1,r)\varphi(-1,1+r)\neq\varphi(-1,r)\varphi(1,-1+r)$. Assume $\varphi(-1,r)\neq0$. Note that $W_r\in S_2$ and $\rho(W_{-1})W_r=q\varphi(-1,r)W_{r-1}$, we get $W_{r-1}\in S_2$, i.e., $r-1\in B_2$, which contradicts with the minimality of $r$. Hence $\varphi(1,r)\varphi(-1,1+r)\neq0$, which implies $\varphi(1,r)\neq0$. Thus by Eq. (\ref{maprho}), we have $\rho(W_1)W_r=q\varphi(1,r)W_{r+1}\in S_2$. So $W_{r+1}\in S_2$, i.e., $r+1\in B_2$.

    {\bf Case 1:} If $r=1$, then by Claims 3 and 6, we obtain $\{1,2 \}\subseteq B_2$ and $\{0,-1,-2 \}\subseteq B_1$. It follows from Claim 5 that $\varphi(2,-1)=0$. Then $\varphi(-1,2)=3$. So Eq. (\ref{ob6}) becomes $2=3q\varphi(1,1)$, i.e., $\varphi(1,1)=\frac{2}{3q}$. Let $m=-n=2$ in Eq. (\ref{ob2}). Then we obtain
    \begin{eqnarray*}
        -4\varphi(0,1)=q(\varphi(-2,1)\varphi(2,-1)-\varphi(2,1)\varphi(-2,3)).
    \end{eqnarray*}
    It follows from Claim 2 that $3\in B_2$. By Claim 5, we have $\varphi(3,-2)=0$. Then $\varphi(-2,3)=5$. So $\varphi(2,1)=\frac{4}{5q}$. Let $m=2$, $n=-1$ in Eq. (\ref{ob2}). Then we obtain
    \begin{eqnarray*}
        -3\varphi(1,1)=q(\varphi(-1,1)\varphi(2,0)-\varphi(2,1)\varphi(-1,3)).
    \end{eqnarray*}
    It follows from Claim 5 that $\varphi(2,0)=\varphi(3,-1)=0$. Hence $\varphi(-1,3)=4$. So we conclude that $q=\frac{5}{8}$. Let $m=-n=1$ in Eq. (\ref{ob3}). Then we obtain
    \begin{eqnarray*}
        -2\varphi(0,2)=q(\varphi(-1,2)\varphi(1,1)-\varphi(1,2)\varphi(-1,3)).
    \end{eqnarray*}
    Hence $q=\frac{7}{10}$, which is a contradiction.

    {\bf Case 2:} If $r>1$, then $W_{r-1},W_1\in S_1$. Hence $\rho(W_1)W_{r-1}\in S_1\cap S_2$ and $\rho(W_{r-1})W_1\in S_1\cap S_2$. Thus $\varphi(1,r-1)=\varphi(r-1,1)=0$. By Eq. (\ref{phi1}), $\varphi(1,r-1)=\varphi(r-1,1)+r-2$. So $r=2$. Now $\{1,0,-1,-2 \}\subseteq B_1$, $\{2,3 \}\subseteq B_2$. It follows from Claim 5 that $\varphi(1,2)=1$. Let $m=-n=2$ in Eq. (\ref{ob3}). Then we get
    \begin{eqnarray*}
        -4\varphi(0,2)=q(\varphi(-2,2)\varphi(2,0)-\varphi(2,2)\varphi(-2,4)).
    \end{eqnarray*}
    Since $\rho(W_4)W_{-2}=q\varphi(4,-2)W_2\in S_1\cap S_2$, we have $\varphi(4,-2)=0$, $\varphi(-2,4)=6$. So $\varphi(2,2)=\frac{4}{3q}$. Assume $4\in B_1$. Then by Claim 4, $2\in B_1$, which yields a contradiction. Hence $4\in B_2$. Let $m=2$, $n=-1$ in Eq. (\ref{ob3}). Then we have
    \begin{eqnarray*}
        -3\varphi(1,2)=q(\varphi(-1,2)\varphi(2,1)-\varphi(2,2)\varphi(-1,4)).
    \end{eqnarray*}
    It follows from Claim 5 that $\varphi(-1,4)=5$. Thus $3=\frac{20}{3}$, which is a contradiction.

    So $\mathbb{Z}_+\subseteq B_1$. Similarly, we have $\mathbb{Z}_-\subseteq B_1$. Hence $B_2=\varnothing$, which yields a contradiction. Therefore, $(S,\rho)$ is indecomposable. By Theorem \ref{isom-rep}, $(S,\rho)$ is isomorphic to one of $V_\alpha,V^\beta,V_{\alpha,\beta}$.
\end{proof}

\begin{lem}\label{keylem}
    Under the assumptions and notations of Lemma \ref{lem-weight rep}, $(S,\rho)$ is not isomorphic to $V_\alpha$ or $V^\beta$ as $\mathcal{W}$-representations, and if $q\neq\frac{1}{2}$ and $\beta\neq1-q$, then $(S,\rho)$ is not isomorphic to $V_{\alpha,\beta}$ as $\mathcal{W}$-representations.
\end{lem}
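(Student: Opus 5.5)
Suppose $(S,\rho)\cong V$ via an isomorphism $\psi$, where $V$ is one of $V_\alpha,V^\beta,V_{\alpha,\beta}$. Since $\rho(W_0)W_n=q\varphi(0,n)W_n$ and Proposition \ref{formula} together with Eq. (\ref{phi1}) give $q\varphi(0,n)=q\varphi(0,0)+n$, the weights of $S$ are $q\varphi(0,0)+n$. So $\psi$ maps $W_n$ to a nonzero multiple $c_n v_{n+k}$ of a basis vector, for a fixed shift $k\in\mathbb{Z}$. Then $q\varphi(m,n)$ is read off from the matrix coefficient of $W_m$ on $v_{n+k}$, up to the gauge factor $c_{m+n}^{-1}c_n$. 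I would therefore write $q\varphi(m,n)=\frac{c_n}{c_{m+n}}\,\mu(m,n+k)$, where $\mu(m,i)$ is the structure coefficient of $V$ (for instance $\mu(m,i)=\alpha+i+m\beta$ for $V_{\alpha,\beta}$). The only constraint left to use is Eq. (\ref{phi1}):
\[
q\varphi(m,n)-q\varphi(n,m)=q(n-m),
\]
where both terms share the denominator $c_{m+n}$.

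\textbf{Step 1: $V_\alpha$ and $V^\beta$.} These modules have a distinguished vector $v_0$ where the action degenerates. In $V_\alpha$, $W_{-i}v_i=0$ for every $i\neq 0$. In $V^\beta$, $W_m v_0=0$ for all $m$. Under $\psi$, the element $W_{-k}$ (the preimage of $v_0$) is either killed by everything or only receives zero from pairs. Suppose first that $\rho(W_m)W_{-k}=0$ for all $m$. Then $\varphi(m,-k)=0$, so Eq. (\ref{phi1}) gives $\varphi(-k,m)=m+k$. Similarly, in $V_\alpha$ we get $\varphi(-n-k,n)=0$ for all $n\ne -k$, hence $\varphi(n,-n-k)=-2n-k$. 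I would substitute these forced values into Eq. (\ref{phi2}) for small choices of $m,n,l$ around $-k$, just as in Claims 5--6 of Proposition \ref{assump}, and obtain a polynomial identity in $m$ that cannot hold. For $V_\alpha$ a cleaner route is to compare $\rho(W_m)$ acting on $W_{-k}$ versus $W_{-k-m}$. In $V_\alpha$ the image of $v_0$ under $W_m$ is $m(\alpha+m)v_m$, which is quadratic in $m$. Meanwhile, Eq. (\ref{phi1}) paired with the linear coefficients $(m+i)$ on the other side forces $\varphi(m,-k)$ to be at most linear in $m$ after eliminating the gauge. Comparing degrees for large $|m|$ should then give a contradiction.

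\textbf{Step 2: $V_{\alpha,\beta}$.} Here $\mu(m,i)=\alpha+i+m\beta$ never degenerates. I would first show that the gauge can be normalized. Take Eq. (\ref{phi1}) with $m=0$, then $n=1$ and general $m$, to get a recursion for $c_{m+1}/c_m$. Together with nonvanishing, this should force $c_n/c_{m+n}$ to be constant, so after rescaling $c_n\equiv 1$. Writing $\gamma=\alpha+k$, this yields $q\varphi(m,n)=\gamma+n+m\beta$. Then Eq. (\ref{phi1}) reads
\[
(n+m\beta)-(m+n\beta)=q(n-m),
\]
i.e. $(1-\beta)(n-m)=q(n-m)$, so $\beta=1-q$. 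This contradicts the hypothesis $\beta\neq 1-q$. The extra hypothesis $q\ne\frac12$ is needed to rule out the alternative solution in the gauge recursion. When $\beta$ pairs with $1-\beta$ (the dual module $V_{-\alpha,1-\beta}$), the roles of $\varphi(m,n)$ and $\varphi(n,m)$ can swap, giving $\beta=q$. I expect $q=\tfrac12$ to be exactly the value where these two solutions coincide.

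\textbf{Main obstacle.} The hard part is controlling the gauge factors $c_n$ when some $\mu$ vanish, especially in $V_{\alpha,\beta}$ when $\alpha+i+m\beta=0$ for some indices and in the degenerate modules of Step 1. There Eq. (\ref{phi1}) gives no ratio information at those indices. I would handle this by using Eq. (\ref{phi2}) with $l=0,1,2$ (Eqs. (\ref{ob1})--(\ref{ob3})) at generic indices to propagate the ratios, and treat finitely many exceptional indices separately.
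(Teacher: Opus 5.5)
Your reduction is sound, and it is the same framework as the paper's proof: one-dimensional weight spaces and $q\varphi(0,n)=q\varphi(0,0)+n$ give $\psi(W_n)=c_nv_{n+k}$ with $c_n\neq0$, and $q\varphi(m,n)=\frac{c_n}{c_{m+n}}\mu(m,n+k)$. The gap is in how you treat the gauge $c_n$.

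\textbf{Step 2.} You claim that Eq.~(\ref{phi1}) at $m=0$ lets you normalize $c_n\equiv1$. It does not. Put $\gamma=\alpha+k$. Then Eq.~(\ref{phi1}) at $m=0$ (equivalently Proposition~\ref{formula}) reads $\frac{c_0}{c_n}(\gamma+n\beta)=\gamma+(1-q)n$, and this \emph{determines} the gauge:
\[
c_n=c_0\,\frac{\gamma+n\beta}{\gamma+(1-q)n},\qquad n\neq0.
\]
The denominator cannot vanish for $n\neq0$: otherwise $\gamma+n\beta=n(\beta-1+q)\neq0$, and the left side $\frac{c_0}{c_n}(\gamma+n\beta)$ would have to be $0$. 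For $\gamma\neq0$ this gauge is constant exactly when $\beta=1-q$. So setting $c_n\equiv1$ and then reading off $\beta=1-q$ is circular. It silently assumes the case the hypothesis excludes, and you never derive a contradiction from the actual data.

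The missing work is to substitute this non-constant gauge back into Eq.~(\ref{phi1}) at other index pairs:
\begin{itemize}
\item At $(m,-m)$ you get $\beta(1-\beta)=q(1-q)$, i.e.\ $(\beta-q)(\beta+q-1)=0$, so $\beta=q$.
\item With $\beta=q$, computing $\varphi(n,m)$ and $\varphi(m,n)$ for general $m,n$ gives $(2q-1)mn(m+n)=0$, which is the contradiction.
\end{itemize}
This is where the alternative $\beta=q$ is actually excluded and where $q\neq\frac12$ enters. Your remark about the dual module correctly anticipates that alternative, but it gives no argument against it.

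\textbf{Step 1.} This step is only heuristic, for the same reason. The degree comparison for $V_\alpha$ cannot work, because the gauge absorbs the quadratic factor.
\begin{itemize}
\item If $k\neq0$, isolate this case; it is immediate. $W_{-k}v_k=0$ gives $\varphi(-k,0)=0$, while Proposition~\ref{formula} gives $q\varphi(-k,0)=qk\neq0$.
\item If $k=0$, Eq.~(\ref{phi1}) at $m=0$ only yields $c_m=\frac{c_0(\alpha+m)}{1-q}$ for $m\neq0$, with no contradiction. You must then compute $q\varphi(n,m)=\frac{(\alpha+m)(m+n)}{\alpha+m+n}$ at general indices. Eq.~(\ref{phi1}) then forces $(1-q)(m+n)=q\alpha$ for infinitely many values of $m+n$, which is impossible.
\end{itemize}
For $V^\beta$, ``substituting forced values into Eq.~(\ref{phi2})'' is never carried out. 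It is also redundant: Eq.~(\ref{phi2}) holds automatically once $\varphi$ has the transported form. The contradiction again comes from Eq.~(\ref{phi1}) together with the explicit gauge.
\begin{itemize}
\item If $k=0$, Proposition~\ref{formula} gives $(1-q)m\,c_m=0$ for $m\neq0$, which is impossible outright.
\item If $k\neq0$, it gives $c_m=\frac{c_0k}{k+(1-q)m}$ for $m\neq-k$. Combined with $\varphi(-k,m)=m+k$, this yields $(1-q)^2m=0$ for all $m\neq0,-k$, a contradiction.
\end{itemize}
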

\begin{proof}
    Suppose that $f:S\to V_\alpha$ is a $\mathcal{W}$-representation isomorphism. Set $f(W_0)=\sum_{i\in\mathbb{Z}}a_iv_i$, where $a_i\in\mathbb{C}$ and at most finitely many $a_i$ are nonzero. Then we obtain
    \begin{eqnarray}\label{Valpha}
        \sum_{i\in\mathbb{Z}^\times}ia_iv_i=W_0f(W_0)=f(\rho(W_0)W_0)=q\varphi(0,0)f(W_0)=q\varphi(0,0)\sum_{i\in\mathbb{Z}}a_iv_i.
    \end{eqnarray}

    {\bf Case 1:} If there exists $t\in\mathbb{Z}^\times$ such that $a_t\neq0$, then $\varphi(0,0)=\frac{t}{q}\neq0$. Assume that there exists $i\neq t$ such that $a_i\neq0$, then $\varphi(0,0)=\frac{i}{q}\neq \frac{t}{q}$, which is a contradiction. So $a_i=0$ for any $i\neq t$. Hence $f(W_0)=a_tv_t$. Thus we obtain
    \begin{eqnarray*}
        qtf(W_{-t})=q\varphi(-t,0)f(W_{-t})=f(\rho(W_{-t})W_0)=W_{-t}f(W_0)=W_{-t}(a_tv_t)=0.
    \end{eqnarray*}
    So $f(W_{-t})=0$, which contradicts with that $f$ is a $\mathcal{W}$-representation isomorphism.

    {\bf Case 2:} If $f(W_0)=a_0v_0$ with $a_0\neq0$, then $\varphi(0,0)=0$ due to Eq. (\ref{Valpha}). For any $m\neq0$, we get
    \begin{eqnarray*}
        (1-q)mf(W_m)=q\varphi(m,0)f(W_m)=f(\rho(W_m)W_0)=W_mf(W_0)=W_m(a_0v_0)=m(\alpha+m)a_0v_m.
    \end{eqnarray*}
    So $f(W_m)=\frac{1}{1-q}(\alpha+m)a_0v_m$ for any $m\neq0$. Assume that $m,m+n\in\mathbb{Z}^\times$ and $\alpha+m+n\neq0$, then we have
    \begin{eqnarray*}
        &&\frac{q}{1-q}\varphi(n,m)(\alpha+m+n)a_0v_{m+n}=\frac{q}{1-q}\varphi(n,m)f(W_{m+n})=f(\rho(W_n)W_m)\\
        &=&W_nf(W_m)=\frac{1}{1-q}(\alpha+m)(m+n)a_0v_{m+n}.
    \end{eqnarray*}
    Hence $\varphi(n,m)=\frac{(\alpha+m)(m+n)}{q(\alpha+m+n)}$. Similarly, we get $\varphi(m,n)=\frac{(\alpha+n)(m+n)}{q(\alpha+m+n)}$ for any $n,m+n\in\mathbb{Z}^\times$ with $\alpha+m+n\neq0$. It follows from Eq. (\ref{phi1}) that $m-n=\frac{(m+n)(m-n)}{q(\alpha+m+n)}$ for any $m,n,m+n\in\mathbb{Z}^\times$ with $\alpha+m+n\neq0$, which is a contradiction.

    Taking together the arguments of the above two cases, we show that $(S,\rho)$ is not isomorphic to $V_\alpha$ as $\mathcal{W}$-representations. Similarly, we can prove that $(S,\rho)$ is not isomorphic to $V^\beta$ as $\mathcal{W}$-representations.

    Now suppose that $g:S\to V_{\alpha,\beta}$ is a $\mathcal{W}$-representation isomorphism. Set $g(W_0)=\sum_{i\in\mathbb{Z}}b_iv_i$, where $b_i\in\mathbb{C}$ and at most finitely many $b_i$ are nonzero. Then we obtain
    \begin{eqnarray*}
        q\varphi(0,0)\sum_{i\in\mathbb{Z}}b_iv_i=q\varphi(0,0)g(W_0)=g(\rho(W_0)W_0)=W_0g(W_0)=W_0(\sum_{i\in\mathbb{Z}}b_iv_i)=\sum_{i\in\mathbb{Z}}(\alpha+i)b_iv_i.
    \end{eqnarray*}
    So there exists $t\in\mathbb{Z}$ such that $\varphi(0,0)=\frac{1}{q}(\alpha+t)$ and $g(W_0)=b_tv_t$, where $b_t\neq0$. Let $m\neq0$. Then we have
    \begin{eqnarray*}
        &&(\alpha+t+(1-q)m)g(W_m)=q\varphi(m,0)g(W_m)=g(\rho(W_m)W_0)\\&=&W_mg(W_0)=W_m(b_tv_t)=(\alpha+t+m\beta)b_tv_{m+t}.
    \end{eqnarray*}
    Since $\beta\neq1-q$, $g(W_m)=\frac{\alpha+t+m\beta}{\alpha+t+(1-q)m}b_tv_{m+t}$ for any $m\neq0$. Hence for any $m\neq0$, we get
    \begin{eqnarray*}
        && q\varphi(m,-m)b_tv_t=q\varphi(m,-m)g(W_0)=g(\rho(W_m)W_{-m})=W_mg(W_{-m})\\&=&W_m(\frac{\alpha+t-m\beta}{\alpha+t-(1-q)m}b_tv_{-m+t})=\frac{(\alpha+t-m+m\beta)(\alpha+t-m\beta)}{\alpha+t-(1-q)m}b_tv_t.
    \end{eqnarray*}
    Thus $\varphi(m,-m)=\frac{(\alpha+t-m+m\beta)(\alpha+t-m\beta)}{q(\alpha+t-(1-q)m)}$ for any $m\neq0$. Similarly, we obtain $\varphi(-m,m)=\frac{(\alpha+t+m-m\beta)(\alpha+t+m\beta)}{q(\alpha+t+(1-q)m)}$ for any $m\neq0$. It follows from Eq. (\ref{phi1}) that
    \begin{eqnarray*}
        -2m=\varphi(m,-m)-\varphi(-m,m)=\frac{-2qm(\alpha+t)^2-2(1-q)m^3\beta(\beta-1)}{q(\alpha+t-(1-q)m)(\alpha+t+(1-q)m)},\;\;m\neq0.
    \end{eqnarray*}
    So $\beta^2-\beta+q-q^2=0$. Since $\beta\neq1-q$, we get $\beta=q$, which implies $g(W_m)=\frac{\alpha+t+qm}{\alpha+t+(1-q)m}b_tv_{m+t}$ for any $m\neq0$. Let $m\neq0$ and $m+n\neq0$. We have
    \begin{eqnarray*}
        &&q\varphi(n,m)\frac{\alpha+t+qm+qn}{\alpha+t+(1-q)m+(1-q)n}b_tv_{m+n+t}=q\varphi(n,m)g(W_{m+n})=g(\rho(W_n)W_m)\\&=&W_ng(W_m)=W_n(\frac{\alpha+t+qm}{\alpha+t+(1-q)m}b_tv_{m+t})=\frac{(\alpha+t+qm)(\alpha+t+m+qn)}{\alpha+t+(1-q)m}b_tv_{m+n+t}.
    \end{eqnarray*}
    So we get $\varphi(n,m)=\frac{(\alpha+t+qm)(\alpha+t+m+qn)(\alpha+t+(1-q)(m+n))}{q(\alpha+t+(1-q)m)(\alpha+t+q(m+n))}$ for any $m\neq0$ and $m+n\neq0$. Similarly, we obtain $\varphi(m,n)=\frac{(\alpha+t+qn)(\alpha+t+n+qm)(\alpha+t+(1-q)(m+n))}{q(\alpha+t+(1-q)n)(\alpha+t+q(m+n))}$ for any $n\neq0$ and $m+n\neq0$. Thus for any $m\neq0,n\neq0,m+n\neq0,m-n\neq0$, it follows from Eq. (\ref{phi1}) that
    \begin{eqnarray*}
        &&m-n=\varphi(n,m)-\varphi(m,n)\\
        &=&\frac{(m-n)(\alpha+t+(1-q)(m+n))((\alpha+t)^2+q(m+n)(\alpha+t)+(1-q)^2mn)}{(\alpha+t+(1-q)m)(\alpha+t+(1-q)n)(\alpha+t+q(m+n))}.
    \end{eqnarray*}
    Hence $(2q-1)mn(m+n)=0$, which yields a contradiction. So $(S,\rho)$ is not isomorphic to $V_{\alpha,\beta}$ as $\mathcal{W}$-representations when $q\neq\frac{1}{2}$ and $\beta\neq1-q$.
\end{proof}

\begin{thm}\label{thm-gra-q}
    Let $q\neq1$. Then we have the following conclusions.\begin{enumerate}
    \item There are no compatible graded $0$-pre-Lie algebra structures on the Witt algebra $\mathcal{W}$.
   \item  Assume that $(S,\circ)$ is a compatible graded  $q$-pre-Lie algebra structure on the Witt algebra $\mathcal{W}$, where $q\neq0$. Then there exists $\lambda\in\mathbb{C}$ such that
    \begin{eqnarray}\label{gra-q}
        W_n\circ W_m=\frac{1}{q}(\lambda+m+(1-q)n)W_{m+n},\;\;m,n\in\mathbb{Z}.
    \end{eqnarray}
    Furthermore, for any $\lambda\in\mathbb{C}$, Eq. (\ref{gra-q}) defines a compatible graded $q$-pre-Lie algebra structure on $\mathcal{W}$, which is denoted by $(S,\circ_\lambda)$.
    \end{enumerate}
\end{thm}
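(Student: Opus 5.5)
The plan is to reduce everything to the representation-theoretic results already established (Lemma \ref{lem-weight rep}, Proposition \ref{assump}, Lemma \ref{keylem}). The only case those results leave open is $(S,\rho)\cong V_{\alpha,\beta}$ with $\beta=1-q$, and this case produces exactly formula (\ref{gra-q}). Sufficiency will then be a direct check of the conditions in Lemma \ref{lem-phi}.

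\textbf{Part (a).} I treat this first. If $q=0$, Eq. (\ref{phi2}) gives $(n-m)\varphi(m+n,l)=0$ for all $m,n,l$. Choosing $n\neq m$ with $m+n$ arbitrary, this forces $\varphi\equiv 0$. That contradicts Eq. (\ref{phi1}), since $\varphi(0,1)-\varphi(1,0)=1$.

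\textbf{Part (b), reduction.} Now let $q\neq 0,1$. By Lemma \ref{lem-weight rep} and Proposition \ref{assump}, $(S,\rho)$ is isomorphic to one of $V_\alpha$, $V^\beta$, $V_{\alpha,\beta}$. Lemma \ref{keylem} rules out $V_\alpha$ and $V^\beta$. It also rules out $V_{\alpha,\beta}$ unless $q=\frac12$ or $\beta=1-q$. When $q=\frac12$ I rerun the first half of the $V_{\alpha,\beta}$ argument in Lemma \ref{keylem}, which is still valid under the assumption $\beta\neq 1-q$. It gives $\beta^2-\beta+q-q^2=0$, i.e. $(\beta-\frac12)^2=0$, so $\beta=\frac12=1-q$, contradicting that assumption. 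Hence in every case we may assume $g:S\to V_{\alpha,1-q}$ is an isomorphism of $\mathcal{W}$-representations.

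\textbf{Part (b), the case $\beta=1-q$.} As in Lemma \ref{keylem}, $g(W_0)=b_tv_t$ and $q\varphi(0,0)=\alpha+t$. Set $\lambda:=\alpha+t$.
\begin{itemize}
\item By Proposition \ref{formula}, $q\varphi(0,m)=\lambda+m$. So $W_m$ has $\rho(W_0)$-weight $\lambda+m$, and $g(W_m)=c_mv_{m+t}$ for some $c_m\neq0$, with $c_0=b_t$.
\item Applying $g$ to $\rho(W_n)W_m=q\varphi(n,m)W_{m+n}$ gives
\[ q\varphi(n,m)\,c_{m+n}=(\lambda+m+(1-q)n)\,c_m. \]
\item Taking $m=0$ gives $q\varphi(n,0)c_n=(\lambda+(1-q)n)c_0$. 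Proposition \ref{formula} also gives $q\varphi(n,0)=\lambda+(1-q)n$. Hence $c_n=c_0$ whenever $\lambda+(1-q)n\neq0$.
\item This condition can fail for at most one value $n_0$. That single exceptional index is the main technical obstacle. I will handle it by taking $m=n_0$ and $n\neq 0$ with both $n+n_0$ and $n+n_0+(1-q)n$ generic in the displayed relation, combined with the analogous relation for $\varphi(n_0+n,-n)$. Alternatively, I will use Eq. (\ref{phi1}) on the pair $(n,n_0)$ together with $c_{n+n_0}=c_0$. Either way the goal is $c_{n_0}=c_0$.
\item Once all $c_m$ are equal, the relation reads $q\varphi(n,m)=\lambda+m+(1-q)n$, which is (\ref{gra-q}).
\end{itemize}

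\textbf{Converse.} For $\varphi(n,m)=\frac1q(\lambda+m+(1-q)n)$, Eq. (\ref{phi1}) holds because $\varphi(m,n)-\varphi(n,m)=\frac1q(n-m)(1-(1-q))=n-m$. For Eq. (\ref{phi2}), expand $q\big(\varphi(n,l)\varphi(m,n+l)-\varphi(m,l)\varphi(n,m+l)\big)$ as a product of linear forms. The terms symmetric in $m,n$ cancel, and the remainder is $\frac{1}{q}(n-m)(\lambda+l+(1-q)(m+n))=(n-m)\varphi(m+n,l)$. By Lemma \ref{lem-phi}, this shows $(S,\circ_\lambda)$ is a compatible graded $q$-pre-Lie algebra structure on $\mathcal{W}$.
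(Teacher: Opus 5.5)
Your proposal is correct and follows the paper's route. Proposition \ref{assump} and Lemma \ref{keylem} reduce to $(S,\rho)\cong V_{\alpha,1-q}$, you read $\varphi$ off the isomorphism, and the only real work is the single index $n_0$ with $\lambda+(1-q)n_0=0$. Two differences from the paper are worth recording, and both are in your favour.

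First, the paper invokes Lemma \ref{keylem} to conclude $\beta=1-q$, although that lemma is stated only for $q\neq\frac12$. Your remark closes this gap: $\beta^2-\beta+q-q^2=(\beta-q)(\beta-1+q)$ has the double root $\beta=\frac12=1-q$ at $q=\frac12$.

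Second, for the exceptional index the paper splits into three subcases:
\begin{itemize}
\item $m_0=0$;
\item $m_0\neq0$ with $q\neq\frac12$, testing on the pair $(m_0,-m_0)$, which degenerates at $q=\frac12$;
\item $m_0\neq0$ with $q=\frac12$, applying Eq.~(\ref{phi1}) to the pair $(n,m_0)$.
\end{itemize}
Your second suggestion is exactly the last argument, and it works uniformly for all $q\neq0,1$, so the split is unnecessary.

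You only announced that step, though, so write it out. If $n_0=0$ there is nothing to show. If $n_0\neq0$, then $\lambda=(q-1)n_0$. For $n\notin\{0,n_0\}$ we have $c_n=c_{n+n_0}=c_0$, so your relation gives $q\varphi(n_0,n)=\lambda+n+(1-q)n_0=n$ and $q\varphi(n,n_0)=(qn_0+(1-q)n)\,c_{n_0}/c_0$. Eq.~(\ref{phi1}) says $q\varphi(n,n_0)-q\varphi(n_0,n)=q(n_0-n)$, which becomes $(qn_0+(1-q)n)(c_{n_0}/c_0-1)=0$. Since $1-q\neq0$, you can choose $n$ with $qn_0+(1-q)n\neq0$, and this gives $c_{n_0}=c_0$.

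Your first suggestion also works, provided you pair $\varphi(n_0+n,-n)$ with $\varphi(-n,n_0+n)$ via Eq.~(\ref{phi1}). That yields $(n_0+2n)(c_0/c_{n_0}-1)=0$.
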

\begin{proof}
    We only need to prove Item (b). It is clear that Eq. (\ref{gra-q}) gives a compatible graded $q$-pre-Lie algebra structure on $\mathcal{W}$. Now assume that $(S,\circ)$ is a compatible graded $q$-pre-Lie algebra structure on $\mathcal{W}$. Then by Proposition \ref{assump} and Lemma \ref{keylem}, $(S,\rho)\cong V_{\alpha,1-q}$ as $\mathcal{W}$-representations, where $\alpha\in\mathbb{C}$ and $0\leq{\rm Re}\alpha<1$. Let $f:S\to V_{\alpha,1-q}$ be the $\mathcal{W}$-representation isomorphism. Set $f(W_0)=\sum_{i\in\mathbb{Z}}a_iv_i$, where $a_i\in\mathbb{C}$ and at most finitely many $a_i$ are nonzero. Then we obtain
    \begin{eqnarray*}
        q\varphi(0,0)\sum_{i\in\mathbb{Z}}a_iv_i=q\varphi(0,0)f(W_0)=f(\rho(W_0)W_0)=W_0(\sum_{i\in\mathbb{Z}}a_iv_i)=\sum_{i\in\mathbb{Z}}(\alpha+i)a_iv_i.
    \end{eqnarray*}
    Hence there exists $t\in\mathbb{Z}$ such that $\varphi(0,0)=\frac{1}{q}(\alpha+t)$ and $f(W_0)=a_tv_t$, where $a_t\neq0$. For any $m\neq0$, we have
\begin{eqnarray}\label{important}
    &&(\alpha+t+(1-q)m)f(W_m)=q\varphi(m,0)f(W_m)=f(\rho(W_m)W_0)\\\notag&=&W_mf(W_0)=W_m(a_tv_t)=(\alpha+t+(1-q)m)a_tv_{m+t}.
\end{eqnarray}

    \delete{{\bf Case 1:} $\alpha\neq0$ or $t$ is not a multiple of $(1-q)$.

    In this case, $\alpha+t+(1-q)m\neq0$ for any $m\neq0$. So $f(W_m)=a_tv_{m+t}$ for all $m\in\mathbb{Z}$. Hence for any $m,n\in\mathbb{Z}$, we get
    \begin{eqnarray*}
        q\varphi(n,m)a_tv_{m+n+t}=f(\rho(W_n)W_m)=W_n(a_tv_{m+t})=(\alpha+t+m+(1-q)n)a_tv_{m+n+t}.
        \end{eqnarray*}
        So $\varphi(n,m)=\frac{1}{q}(\alpha+t+m+(1-q)n)$, which implies $W_n\circ W_m=\frac{1}{q}(\alpha+t+m+(1-q)n)W_{m+n}$ for any $m,n\in\mathbb{Z}$.

        {\bf Case 2:} $\alpha=0$ and $t=0$.

        In this case, by Eq. (\ref{important}), we also get $f(W_m)=a_tv_{m+t}$ for any $m\in\mathbb{Z}$. Similarly, we have $W_n\circ W_m=\frac{1}{q}(\alpha+t+m+(1-q)n)W_{m+n}$ for any $m,n\in\mathbb{Z}$.

        {\bf Case 3:} $\alpha=0$, $t\neq0$ and $t$ is a multiple of $(1-q)$.

        In this case, by Eq. (\ref{important}), we have $f(W_m)=a_tv_{m+t}$ for any $m\neq-\frac{t}{1-q}$. Write $f(W_{-\frac{t}{1-q}})=\sum_{i\in\mathbb{Z}}b_iv_i$, where $b_i\in\mathbb{C}$ and at most finitely many $b_i$ are nonzero. Then
        \begin{eqnarray*}
            \frac{t}{1-q}\sum_{i\in\mathbb{Z}}b_iv_i=q\varphi(0,-\frac{t}{1-q})\sum_{i\in\mathbb{Z}}b_iv_i=f(\rho(W_0)W_{-\frac{t}{1-q}})=W_0(\sum_{i\in\mathbb{Z}}b_iv_i)=\sum_{i\in\mathbb{Z}}ib_iv_i.
        \end{eqnarray*}
        So there exists $c\in\mathbb{C}^\times$ such that $f(W_{-\frac{t}{1-q}})=cv_{\frac{t}{1-q}}$. Hence
        \begin{eqnarray*}
            &&q\varphi(\frac{t}{1-q},-\frac{t}{1-q})a_tv_t=f(\rho(W_{\frac{t}{1-q}})W_{-\frac{t}{1-q}})=W_{\frac{t}{1-q}}f(W_{-\frac{t}{1-q}})=W_{\frac{t}{1-q}}(cv_{\frac{t}{1-q}})=\frac{t(2-q)}{1-q}cv_{\frac{2t}{1-q}},   \\
            &&q\varphi(-\frac{t}{1-q},\frac{t}{1-q})a_tv_t=f(\rho(W_{-\frac{t}{1-q}})W_{\frac{t}{1-q}})=W_{-\frac{t}{1-q}}f(W_{\frac{t}{1-q}})=W_{-\frac{t}{1-q}}(a_tv_{\frac{t(2-q)}{1-q}})=\frac{t}{1-q}a_tv_t.
        \end{eqnarray*}
So $\varphi(-\frac{t}{1-q},\frac{t}{1-q})=\frac{t}{q(1-q)}$.

{\bf Subcase 3.1:} If $q=-1$, then $\varphi(\frac{t}{1-q},-\frac{t}{1-q})=-\frac{3tc}{2a_t}$.

{\bf Subcase 3.2:} If $q\neq-1$, then $\varphi(\frac{t}{1-q},-\frac{t}{1-q})=0$ and $\frac{t(2-q)}{1-q}=0$, which implies $q=2$. Hence $\varphi(-t,t)=0$ and $\varphi(t,-t)=-\frac{t}{2}$. Thus $t=0$, which yields a contradiction.

        {\bf Another method:}}

Now we focus on the set $\mathcal{M}:=\{m\in\mathbb{Z}\mid \alpha+t+(1-q)m=0  \}$. Since $q\neq1$, the equation $\alpha+t+(1-q)m=0$ has at most one solution in $\mathbb{Z}$. So $\mathcal{M}$ is either empty or contains a single integer $m_0$.

        {\bf Case 1:} $\mathcal{M}=\varnothing$.

        In this case, we have $\alpha+t+(1-q)m\neq0$ for any $m\in\mathbb{Z}$. By Eq. (\ref{important}), we have $f(W_m)=a_tv_{m+t}$ for all $m\in\mathbb{Z}$. Hence for any $m,n\in\mathbb{Z}$, we obtain
\begin{eqnarray*}
    &&q\varphi(n,m)a_tv_{m+n+t}=q\varphi(n,m)f(W_{m+n})=f(\rho(W_n)W_m)\\
    &=&W_nf(W_m)=W_n(a_tv_{m+t})=(\alpha+t+m+(1-q)n)a_tv_{m+n+t}.
\end{eqnarray*}
Thus $\varphi(n,m)=\frac{1}{q}(\alpha+t+m+(1-q)n)$, which implies $W_n\circ W_m=\frac{1}{q}(\alpha+t+m+(1-q)n)W_{m+n}$ for any $m,n\in\mathbb{Z}$.

{\bf Case 2:} $\mathcal{M}=\{m_0\}$ for some $m_0\in\mathbb{Z}$.

In this case, we get $m_0=\frac{\alpha+t}{q-1}$. By Eq. (\ref{important}), we obtain $f(W_m)=a_tv_{m+t}$ for any $m\neq m_0$. Note that
\begin{eqnarray*}
    qm_0f(W_{m_0})=(\alpha+t+m_0)f(W_{m_0})=q\varphi(0,m_0)f(W_{m_0})=f(\rho(W_0)W_{m_0})=W_0f(W_{m_0}).
\end{eqnarray*}
So $f(W_{m_0})$ has weight $qm_0$, which equals the weight of $v_{m_0+t}$. Thus there exists $c\in\mathbb{C}^\times$ such that $f(W_{m_0})=cv_{m_0+t}$. Now we consider the following two subcases.

{\bf Subcase 1:} $m_0=0$.

It follows from $f(W_0)=a_tv_t$ that $c=a_t$.

{\bf Subcase 2:} $m_0\neq0$ and $q\neq\frac{1}{2}$.

Note that
\begin{eqnarray*}
    && q\varphi(m_0,-m_0)a_tv_t=q\varphi(m_0,-m_0)f(W_0)=f(\rho(W_{m_0})W_{-m_0})
    =W_{m_0}f(W_{-m_0})\\&=&W_{m_0}(a_tv_{-m_0+t})=(\alpha+t-m_0+m_0(1-q))a_tv_t=-m_0a_tv_t.
\end{eqnarray*}
So $\varphi(m_0,-m_0)=-\frac{m_0}{q}$. Hence $\varphi(-m_0,m_0)=(2-\frac{1}{q})m_0$. Then we have
\begin{eqnarray*}
    &&(2q-1)m_0a_tv_t=f(\rho(W_{-m_0})W_{m_0})=W_{-m_0}(cv_{m_0+t})\\&=&(\alpha+t+m_0-m_0(1-q))cv_t=(2q-1)m_0cv_t.
\end{eqnarray*}
Since $q\neq\frac{1}{2}$, we get $c=a_t$.

{\bf Subcase 3:} $m_0\neq0$ and $q=\frac{1}{2}$.

For any $n\notin\{0,m_0,-m_0 \}$, we have
\begin{eqnarray*}
    &&\frac{1}{2}\varphi(n,m_0)a_tv_{m_0+n+t}  =\frac{1}{2}\varphi(n,m_0)f(W_{m_0+n})=f(\rho(W_n)W_{m_0})=W_nf(W_{m_0})\\&=&W_n(cv_{m_0+t})=(\alpha+t+m_0+\frac{1}{2}n)cv_{m_0+n+t}=\frac{1}{2}(m_0+n)cv_{m_0+n+t},  \\
    &&\frac{1}{2}\varphi(m_0,n)a_tv_{m_0+n+t}    =\frac{1}{2}\varphi(m_0,n)f(W_{m_0+n})=f(\rho(W_{m_0})W_n)=W_{m_0}f(W_n)\\
    &=& W_{m_0}(a_tv_{n+t})=(\alpha+t+n+\frac{1}{2}m_0)a_tv_{m_0+n+t}=na_tv_{m_0+n+t}.
\end{eqnarray*}
Hence $\varphi(n,m_0)=\frac{(m_0+n)c}{a_t}$ and $\varphi(m_0,n)=2n$. It follows from Eq. (\ref{phi1}) that
\begin{eqnarray*}
    m_0-n=\varphi(n,m_0)-\varphi(m_0,n)=\frac{(m_0+n)c}{a_t}-2n.
\end{eqnarray*}
Thus $c=a_t$.

So in this case, we can also show that $W_n\circ W_m=\frac{1}{q}(\alpha+t+m+(1-q)n)W_{m+n}$ for any $m,n\in\mathbb{Z}$ with the same proof as that in Case 1.
\end{proof}
\begin{rmk}
    The compatible graded $1$-pre-Lie algebras, i.e., pre-Lie algebras on the Witt algebra,  have been classified in \cite{KCB}.
\end{rmk}

With a similar proof as that in \cite[Proposition 3.13]{BG}, we can get the following proposition.
\begin{pro}\label{pro-isom-iff}
    Let $q\neq1$, $(S_1,\circ)$ and $(S_2,\bullet)$ be two compatible graded $q$-pre-Lie algebra structures on $\mathcal{W}$ given by
    \begin{eqnarray*}
        W_n\circ W_m=\varphi_1(n,m)W_{m+n},\;\;m,n\in\mathbb{Z},\;\;\text{and}\;\;W_n\bullet W_m=\varphi_2(n,m)W_{m+n},\;\;m,n\in\mathbb{Z},
    \end{eqnarray*}
    respectively, where $\varphi_1,\varphi_2:\mathbb{Z}\times\mathbb{Z}\to\mathbb{C}$ are complex-valued functions. Then $(S_1,\circ)\cong(S_2,\bullet)$ as $q$-pre-Lie algebras if and only if
    \begin{eqnarray*}
        \varphi_1(n,m)=\varphi_2(n,m),\;\;m,n\in\mathbb{Z},\;\;\text{or}\;\;\varphi_1(n,m)=-\varphi_2(-n,-m),\;\;m,n\in\mathbb{Z}.
    \end{eqnarray*}
\end{pro}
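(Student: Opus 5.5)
We first prove the ``if'' direction by exhibiting explicit isomorphisms. If $\varphi_1=\varphi_2$, the identity map is an isomorphism. If $\varphi_1(n,m)=-\varphi_2(-n,-m)$, we take the linear map $\theta:S_1\to S_2$ given by $\theta(W_m)=-W_{-m}$. Then
\begin{eqnarray*}
\theta(W_n)\bullet\theta(W_m)=W_{-n}\bullet W_{-m}=\varphi_2(-n,-m)W_{-m-n}=-\varphi_1(n,m)W_{-m-n}=\theta(W_n\circ W_m),
\end{eqnarray*}
so $\theta$ is a bijective homomorphism. It is also compatible with the Witt bracket, since $W_m\mapsto -W_{-m}$ is the standard automorphism of $\mathcal{W}$.

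For the ``only if'' direction, suppose $\theta:(S_1,\circ)\to(S_2,\bullet)$ is an isomorphism of $q$-pre-Lie algebras. The associated Lie algebras of both are $\mathcal{W}$, because both structures are compatible. Since $\theta$ preserves commutators, $\theta$ is an automorphism of $\mathcal{W}$. We then use the well-known description of $\mathrm{Aut}(\mathcal{W})$ (as in the proof of \cite[Proposition 3.13]{BG}): every automorphism has the form $\theta(W_m)=\epsilon a^m W_{\epsilon m}$ with $\epsilon\in\{\pm1\}$ and $a\in\mathbb{C}^\times$. If one prefers not to quote this, one can argue directly. The automorphism must send the ad-semisimple element $W_0$, up to conjugacy, to $\pm W_0$. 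Since $W_0$ is characterized by having all $W_m$ as eigenvectors with integer eigenvalues, $\theta(W_0)=\epsilon W_0$, and then $\theta$ permutes root spaces, giving $\theta(W_m)=c_mW_{\epsilon m}$. The bracket relations then force $c_m=\epsilon a^m$. I expect this step to be the only real obstacle, and I would simply cite the known classification.

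Substituting $\theta(W_m)=\epsilon a^mW_{\epsilon m}$ into $\theta(W_n\circ W_m)=\theta(W_n)\bullet\theta(W_m)$ gives
\begin{eqnarray*}
\varphi_1(n,m)\epsilon a^{m+n}W_{\epsilon(m+n)}=a^{m+n}\varphi_2(\epsilon n,\epsilon m)W_{\epsilon(m+n)}.
\end{eqnarray*}
Cancelling $a^{m+n}$ yields $\varphi_1(n,m)=\epsilon\varphi_2(\epsilon n,\epsilon m)$ for all $m,n\in\mathbb{Z}$. Taking $\epsilon=1$ and $\epsilon=-1$ gives exactly the two alternatives in the statement.

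Finally, we can specialize using Theorem \ref{thm-gra-q}: for $(S,\circ_\lambda)$ we have $-\varphi_\lambda(-n,-m)=\frac1q(-\lambda+m+(1-q)n)$. Hence $(S,\circ_\lambda)\cong(S,\circ_\mu)$ if and only if $\mu=\pm\lambda$.
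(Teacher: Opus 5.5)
Your argument is correct and is essentially the route the paper intends when it defers to \cite[Proposition 3.13]{BG}: an isomorphism between compatible structures preserves the commutator, so it is an automorphism of $\mathcal{W}$ of the form $W_m\mapsto\epsilon a^mW_{\epsilon m}$, and substituting this gives $\varphi_1(n,m)=\epsilon\varphi_2(\epsilon n,\epsilon m)$. Rely on the cited description of ${\rm Aut}(\mathcal{W})$ rather than your sketched direct argument, because the step ``$W_0$ is characterized by having all $W_m$ as eigenvectors, hence $\theta(W_0)=\epsilon W_0$'' does not justify the conclusion: the eigenvectors of $\theta(W_0)$ are the $\theta(W_m)$, not the $W_m$.
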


Taking  Theorem \ref{thm-gra-q} and Proposition \ref{pro-isom-iff} together, we have the following conclusion.
\begin{thm}
    Under the assumptions and notations of Theorem \ref{thm-gra-q}, for $\lambda_1,\lambda_2\in\mathbb{C}$, $(S,\circ_{\lambda_1})\cong(S,\circ_{\lambda_2})$ as compatible graded $q$-pre-Lie algebra structures on the Witt algebra $\mathcal{W}$ if and only if $\lambda_1=\lambda_2$ or $\lambda_1=-\lambda_2$.
\end{thm}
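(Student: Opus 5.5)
The plan is to combine Theorem \ref{thm-gra-q} with Proposition \ref{pro-isom-iff}. By Theorem \ref{thm-gra-q}, for every $\lambda\in\mathbb{C}$ the structure $(S,\circ_\lambda)$ is graded, with structure function
\begin{eqnarray*}
\varphi_\lambda(n,m)=\frac{1}{q}(\lambda+m+(1-q)n),\;\;m,n\in\mathbb{Z}.
\end{eqnarray*}
Proposition \ref{pro-isom-iff} then says that $(S,\circ_{\lambda_1})\cong(S,\circ_{\lambda_2})$ holds exactly when one of two conditions is satisfied. The first is $\varphi_{\lambda_1}(n,m)=\varphi_{\lambda_2}(n,m)$ for all $m,n$. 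The second is $\varphi_{\lambda_1}(n,m)=-\varphi_{\lambda_2}(-n,-m)$ for all $m,n$. So the proof reduces to deciding when each of these identities of functions on $\mathbb{Z}\times\mathbb{Z}$ holds.

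For the first condition, subtracting gives $\varphi_{\lambda_1}(n,m)-\varphi_{\lambda_2}(n,m)=\frac{1}{q}(\lambda_1-\lambda_2)$, which is a constant. It vanishes identically if and only if $\lambda_1=\lambda_2$.

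For the second condition, compute
\begin{eqnarray*}
-\varphi_{\lambda_2}(-n,-m)=-\frac{1}{q}(\lambda_2-m-(1-q)n)=\frac{1}{q}(-\lambda_2+m+(1-q)n).
\end{eqnarray*}
Comparing this with $\varphi_{\lambda_1}(n,m)$, the terms in $m$ and $n$ agree. The identity therefore holds exactly when $\lambda_1=-\lambda_2$.

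Combining the two cases gives the claimed criterion $\lambda_1=\lambda_2$ or $\lambda_1=-\lambda_2$. Note that $q\neq 0$ is needed for $\varphi_\lambda$ to be defined, and $q\neq1$ is needed for Proposition \ref{pro-isom-iff} to apply; both hold under the assumptions of Theorem \ref{thm-gra-q}.

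The computation itself is trivial. The only step with real content is Proposition \ref{pro-isom-iff}, which we take as given. Its proof, adapted from \cite{BG}, would show that any isomorphism preserves the grading up to the automorphism $W_n\mapsto -W_{-n}$ of $\mathcal{W}$, possibly composed with scalars. Those scalars are then forced to be trivial by compatibility with the fixed bracket.
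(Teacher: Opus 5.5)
Your proposal is correct and is exactly the paper's argument: the paper obtains this theorem by combining Theorem \ref{thm-gra-q} with Proposition \ref{pro-isom-iff}, and your check that the two conditions of Proposition \ref{pro-isom-iff} reduce to $\lambda_1=\lambda_2$ and $\lambda_1=-\lambda_2$ is the computation the paper leaves implicit. One small correction to your closing aside: the rescalings $W_n\mapsto a^nW_n$ in the automorphism group of $\mathcal{W}$ are not forced to be trivial, they simply cancel out of the structure constants, which is why only the choice between the identity and $W_n\mapsto -W_{-n}$ matters.
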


\subsection{Compatible graded $q$-pre-Lie algebra structures on the Virasoro algebra}

Recall \cite{Ca} that the {\bf Virasoro algebra} $(\mathcal{V},[\cdot,\cdot])$ is an infinite-dimensional Lie algebra with a basis $\{W_m,{\bf c}\mid m\in\mathbb{Z} \}$ satisfying
\begin{eqnarray}\label{vir-bracket}
    [W_m,W_n]=(n-m)W_{m+n}+\delta_{m+n,0}\frac{m^3-m}{12}{\bf c},\;\;[W_m,{\bf c}]=0,\;\;m,n\in\mathbb{Z}.
\end{eqnarray}
It is well known that the Virasoro algebra $\mathcal{V}$ is the central extension of the Witt algebra $\mathcal{W}$. Here we consider the compatible graded $q$-pre-Lie algebra structures on $\mathcal{V}$ satisfying
\begin{eqnarray}
    \label{V1}
    &&W_n\circ W_m=\varphi'(n,m)W_{m+n}+\varphi(n,m){\bf c},   \\
    \label{V2}
    &&W_n\circ{\bf c}={\bf c}\circ W_n={\bf c}\circ{\bf c}=0,\;\;m,n\in\mathbb{Z},
\end{eqnarray}
where $\varphi,\varphi':\mathbb{Z}\times\mathbb{Z}\to\mathbb{C}$ are complex-valued functions.

\begin{lem}\label{lem-Vir}
    Let $q\neq1$. If there exists a compatible graded $q$-pre-Lie algebra structure on $\mathcal{V}$ satisfying Eqs. (\ref{V1}) and (\ref{V2}), then $q\neq0$ and $\varphi'$ satisfy Eqs. (\ref{phi1}) and (\ref{phi2}). Thus there exists $\lambda\in\mathbb{C}$ such that $\varphi'(n,m)=\frac{1}{q}(\lambda+m+(1-q)n)$ for any $m,n\in\mathbb{Z}$.
\end{lem}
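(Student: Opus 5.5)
The plan is to project the defining identities of a $q$-pre-Lie algebra on $\mathcal{V}$ onto the span of $\{W_k\}$, which kills every term involving ${\bf c}$. This reduces the problem to the Witt-algebra case, and then Theorem \ref{thm-gra-q} gives the formula for $\varphi'$.

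First, the commutator. By Eq. (\ref{V1}), $[W_m,W_n]=W_m\circ W_n-W_n\circ W_m=(\varphi'(m,n)-\varphi'(n,m))W_{m+n}+(\varphi(m,n)-\varphi(n,m)){\bf c}$. Compatibility with Eq. (\ref{vir-bracket}) forces $\varphi'(m,n)-\varphi'(n,m)=n-m$, which is Eq. (\ref{phi1}) for $\varphi'$. As a by-product it also forces $\varphi(m,n)-\varphi(n,m)=\delta_{m+n,0}\frac{m^3-m}{12}$.

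Next, Eq. (\ref{q-pre-Lie-1}) with $a=W_m$, $b=W_n$, $c=W_l$. Since ${\bf c}$ annihilates everything under $\circ$ by Eq. (\ref{V2}), we have $W_m\circ(W_n\circ W_l)=\varphi'(n,l)W_m\circ W_{n+l}$, so the ${\bf c}$-part of the inner product contributes nothing. Similarly $[W_m,W_n]\circ W_l=(n-m)W_{m+n}\circ W_l$, because the central term of the bracket acts as zero. Comparing the coefficients of $W_{m+n+l}$ gives exactly Eq. (\ref{phi2}) for $\varphi'$.

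If $q=0$, the $W_{m+n+l}$-component reads $(n-m)\varphi'(m+n,l)=0$ for all $m,n,l$. Taking $n\neq m$ gives $\varphi'\equiv 0$, which contradicts $\varphi'(m,n)-\varphi'(n,m)=n-m$. Hence $q\neq0$; this is the same argument as the remark following Lemma \ref{lem-phi}.

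By Lemma \ref{lem-phi}, $W_n\circ' W_m:=\varphi'(n,m)W_{m+n}$ is then a compatible graded $q$-pre-Lie structure on $\mathcal{W}$. Note that Eq. (\ref{q-pre-Lie-2}) need not be checked separately: for $q\neq0$, Proposition \ref{1/qLcirc} says that the Lie property of the commutator together with Eq. (\ref{q-pre-Lie-1}) suffices, and the commutator of $\circ'$ is the Witt bracket by Eq. (\ref{phi1}). Since $q\neq0,1$, Theorem \ref{thm-gra-q}(b) gives $\lambda\in\mathbb{C}$ with $\varphi'(n,m)=\frac1q(\lambda+m+(1-q)n)$.

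The only delicate point is verifying that no ${\bf c}$-terms leak into the $W$-components. This holds because ${\bf c}$ is annihilated on both sides by Eq. (\ref{V2}), so the argument is routine.
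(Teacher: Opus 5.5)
Your proposal is correct and follows the paper's own route. Like the paper, you take the $W$-components of the compatibility condition and of Eq.~(\ref{q-pre-Lie-1}); since $\mathbf{c}$ is annihilated under $\circ$, this gives Eqs.~(\ref{phi1}) and (\ref{phi2}) for $\varphi'$, and you then invoke Lemma~\ref{lem-phi} and Theorem~\ref{thm-gra-q}. Your explicit exclusion of $q=0$, and your remark that Eq.~(\ref{q-pre-Lie-2}) follows from Proposition~\ref{1/qLcirc} once $q\neq 0$, just fill in details the paper leaves as ``straightforward.''
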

\begin{proof}
   \delete{ Note that
    \begin{eqnarray*}
        &&(m-n)W_{m+n}+\delta_{m+n,0}\frac{n^3-n}{12}{\bf c}=[W_n,W_m]=W_n\circ W_m-W_m\circ W_n\\
        &=&(\varphi'(n,m)W_{m+n}+\varphi(n,m)\delta_{m+n,0}{\bf c})-(\varphi'(m,n)W_{m+n}+\varphi(m,n)\delta_{m+n,0}{\bf c})  \\
        &=&(\varphi'(n,m)-\varphi'(m,n))W_{m+n}+(\varphi(n,m)-\varphi(m,n))\delta_{m+n,0}{\bf c}.
    \end{eqnarray*}
    So $\varphi'$ satisfies Eq. (\ref{phi1}) by comparing the coefficients of $W_{m+n}$. Moreover, by Eq. (\ref{q-pre-Lie-1}), we obtain
    \begin{eqnarray*}
        0&=&q(W_m\circ(W_n\circ W_l)-W_n\circ(W_m\circ W_l))-(W_m\circ W_n)\circ W_l+(W_n\circ W_m)\circ W_l   \\
        &=&q\Big(W_m\circ(\varphi'(n,l)W_{n+l}+\varphi(n,l)\delta_{n+l,0}{\bf c})-W_n\circ(\varphi'(m,l)W_{m+l}+\varphi(m,l)\delta_{m+l,0}{\bf c}) \Big)\\
        &&-(\varphi'(m,n)W_{m+n}+\varphi(m,n)\delta_{m+n,0}{\bf c})\circ W_l+(\varphi'(n,m)W_{m+n}+\varphi(n,m)\delta_{m+n,0}{\bf c})\circ W_l\\
        &=&q\Big(\varphi'(n,l)(\varphi'(m,n+l)W_{m+n+l}+\varphi(m,n+l)\delta_{m+n+l,0}{\bf c})\\
        &&-\varphi'(m,l)(\varphi'(n,m+l)W_{m+n+l}+\varphi(n,m+l)\delta_{m+n+l,0}{\bf c})\Big)\\
        &&-\varphi'(m,n)(\varphi'(m+n,l)W_{m+n+l}+\varphi(m+n,l)\delta_{m+n+l,0}{\bf c})\\
        &&+\varphi'(n,m)(\varphi'(m+n,l)W_{m+n+l}+\varphi(m+n,l)\delta_{m+n+l,0}{\bf c})\\
        &=&\Big(q(\varphi'(n,l)\varphi'(m,n+l)-\varphi'(m,l)\varphi'(n,m+l))+(m-n)\varphi'(m+n,l)  \Big)W_{m+n+l}\\
        &&+\Big(q(\varphi'(n,l)\varphi(m,n+l)-\varphi'(m,l)\varphi(n,m+l))+(m-n)\varphi(m+n,l)  \Big)\delta_{m+n+l,0}{\bf c}.
    \end{eqnarray*}
    Hence $\varphi'$ satisfies Eq. (\ref{phi2}) by observing the coefficient of $W_{m+n+l}$. Thus the result follows from Lemma \ref{lem-phi} and Theorem \ref{thm-gra-q}.} It is straightforward to check that $\varphi'$ satisfies Eqs. (\ref{phi1}) and (\ref{phi2}). Thus the result follows from Lemma \ref{lem-phi} and Theorem \ref{thm-gra-q}.
\end{proof}

In this case, we rewrite Eq. (\ref{V1}) as
\begin{eqnarray}
    \label{rewrite} W_n\circ W_m=\frac{1}{q}(\lambda+m+(1-q)n)W_{m+n}+\varphi(n,m){\bf c},\;\;m,n\in\mathbb{Z}.
\end{eqnarray}

Then we have the following result.
\begin{thm}
    Let $q\neq1$. Then we have the following conclusions.
    \begin{itemize}
        \item[(a)] If $q\neq2$, then there does not exist a compatible graded $q$-pre-Lie algebra structure on $\mathcal{V}$ satisfying Eqs. (\ref{V1}) and (\ref{V2}).
        \item[(b)] Assume that $(S,\circ)$ is a compatible graded $2$-pre-Lie algebra structure on $\mathcal{V}$ satisfying Eqs. (\ref{V1}) and (\ref{V2}). Then
        $(S,\circ)$ is just the trivial compatible $2$-pre-Lie algebra on $\mathcal{V}$ given by
        \begin{eqnarray}\label{thm-vir-class}
            W_n\circ W_m=\frac{1}{2}(m-n)W_{m+n}+\frac{n^3-n}{24}  \delta_{m+n,0}{\bf c}=\frac{1}{2}[W_n,W_m],\;\;m,n\in\mathbb{Z}.
        \end{eqnarray}
    \end{itemize}
\end{thm}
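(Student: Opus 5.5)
By Lemma \ref{lem-Vir}, any such structure has $q\neq 0$ and $W$-part $\varphi'(n,m)=\frac{1}{q}(\lambda+m+(1-q)n)$ for some $\lambda\in\mathbb{C}$, as in Eq. (\ref{rewrite}). It therefore remains to determine the central coefficients $\varphi(n,m)$.

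Since ${\bf c}$ has degree $0$, gradedness forces $\varphi(n,m)=0$ unless $m+n=0$. I will write $\psi(n):=\varphi(n,-n)$. Then I expand the two defining identities (\ref{q-pre-Lie-1}) and (\ref{q-pre-Lie-2}) on $W_m,W_n,W_l$. The $W_{m+n+l}$-components are already satisfied by Lemma \ref{lem-Vir}, so I only extract the coefficient of ${\bf c}$. Every product with ${\bf c}$ vanishes by Eq. (\ref{V2}), so the central term of a bracket never contributes when it is multiplied further. This leaves three conditions.
\begin{enumerate}
\item[(i)] The commutator condition gives $\psi(n)-\psi(-n)=\frac{n^3-n}{12}$ for all $n$.
\item[(ii)] The ${\bf c}$-part of Eq. (\ref{q-pre-Lie-1}) with $l=-m-n$, after substituting $\varphi'$, gives
\begin{eqnarray*}
(\lambda-m-qn)\psi(m)-(\lambda-n-qm)\psi(n)=(n-m)\psi(m+n),\;\;m,n\in\mathbb{Z}.
\end{eqnarray*}
\item[(iii)] The ${\bf c}$-part of Eq. (\ref{q-pre-Lie-2}) gives a cyclic condition $\sum_{\rm cyc}(n-m)\psi(m+n)=0$ whenever $m+n+l=0$.
\end{enumerate}

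Next I solve (ii) together with (i). Setting $n=0$ in (ii) gives $\lambda\psi(m)=(\lambda-qm)\psi(0)$. If $\lambda\neq 0$, then $\psi$ is an affine function of $m$, which contradicts the cubic odd part forced by (i). Hence $\lambda=0$, and then $qm\psi(0)=0$ yields $\psi(0)=0$.

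With $\lambda=0$, (ii) becomes
\begin{eqnarray*}
-(m+qn)\psi(m)+(n+qm)\psi(n)=(n-m)\psi(m+n).
\end{eqnarray*}
I split $\psi=\psi_{\rm odd}+\psi_{\rm ev}$, where (i) determines $\psi_{\rm odd}(n)=\frac{n^3-n}{24}$. I then substitute small values, such as $(m,n)=(1,-1),(2,-1),(1,1),(2,-2),(1,2)$, and their negatives. This gives a finite linear system in $q$, $\psi_{\rm ev}(1)$ and $\psi_{\rm ev}(2)$. I expect it to force $q=2$. For instance, plugging the cubic into the relation should produce a factor $(q-2)$ times a nonzero quantity. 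I also expect it to force $\psi_{\rm ev}(1)=\psi_{\rm ev}(2)=0$.

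Once these are known, the recursion with $m=1$ determines $\psi(n+1)$ from $\psi(n)$ and $\psi(1)$ (the coefficient $n-1$ is nonzero for $n\neq1$), with $\psi(0)=0$ and $\psi(1),\psi(2)$ known. Induction in both directions then gives $\psi(n)=\frac{n^3-n}{24}$ for all $n$. This proves (a), and for $q=2$ it gives exactly Eq. (\ref{thm-vir-class}).

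Finally, I check that for $q=2$ the structure $a\circ b=\frac{1}{2}[a,b]$ is indeed the trivial compatible $2$-pre-Lie algebra noted in the Remark. This shows the structure in (b) exists, and it also satisfies (iii). The main obstacle is the middle step: solving the functional equation cleanly. In particular, I must choose the specializations so that the odd cubic part visibly produces the factor $q-2$ and the even part is killed; if that fails, I would bring in condition (iii) as an extra constraint.
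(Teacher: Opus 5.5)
Your proposal is correct and follows essentially the paper's route: the same central identities (i)--(ii), the same $n=0$ specialization to exclude $\lambda\neq0$, and your hoped-for specializations do work, since $(m,n)=(1,-1),(2,-2)$ give $(q-1)m(\psi(m)+\psi(-m))=0$, hence $\psi(\pm1)=0$ and $\psi(\pm2)=\pm\frac14$, after which $(2,-1)$ reads $-(2-q)\cdot\frac14=0$. The paper simply takes $m=-n$ for every $m$, so $\psi$ is odd and (i) gives $\psi(m)=\frac{m^3-m}{24}$ outright, and substituting back yields $(q-2)mn(n^2-m^2)=0$; this avoids your induction, whose backward $m=1$ step stalls at $n=-2$ because the coefficient $n+2$ of $\psi(n)$ vanishes there. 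Also, rather than assuming $\varphi(n,m)=0$ for $m+n\neq0$ from gradedness, the paper derives it for $q=2$ from $\varphi(m,n)=\varphi(n,m)$ together with the $m=0$ instance $(n-m)\varphi(0,m+n)=(m+n)\varphi(m,n)$ of the central part of Eq.~(\ref{q-pre-Lie-1}).
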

\begin{proof}
\begin{itemize}
    \item[(a)]  Suppose that there exists a compatible graded $q$-pre-Lie algebra structure on $\mathcal{V}$ satisfying Eqs. (\ref{V1}) and (\ref{V2}). By Lemma \ref{lem-Vir}, Eq. (\ref{rewrite}) holds. By definitions of $q$-pre-Lie algebras and the Virasoro algebra, $\varphi:\mathbb{Z}\times \mathbb{Z}\to\mathbb{C}$ satisfies
\begin{eqnarray}
    \label{V01}&&\varphi(m,-m)-\varphi(-m,m)=\frac{m^3-m}{12},  \\
    \label{V02}&&(\lambda-m-qn)\varphi(m,-m)-(\lambda-n-qm)\varphi(n,-n)=(n-m)\varphi(m+n,-m-n).
\end{eqnarray}
Let $n=0$ in Eq. (\ref{V02}). We have $(\lambda-qm)\varphi(0,0)=\lambda\varphi(m,-m)$.

{\bf Case 1:} $\lambda=0$.

In this case, $\varphi(0,0)=0$. Let $m=-n\neq0$ in Eq. (\ref{V02}). We have
\begin{eqnarray*}
    (q-1)m(\varphi(m,-m)+\varphi(-m,m))=-2m\varphi(0,0)=0,\;\;m\in\mathbb{Z}^\times.
\end{eqnarray*}
So $\varphi(m,-m)+\varphi(-m,m)=0$ for any $m\in\mathbb{Z}^\times$. Hence $\varphi(m,-m)=\frac{m^3-m}{24}$ for any $m\in\mathbb{Z}^\times$. Thus, by Eq. (\ref{V02}), we obtain
\begin{eqnarray*}
    (m^2-n^2)\frac{m^2+n^2+qmn-1}{24}=(m^2-n^2)\frac{m^2+n^2+2mn-1}{24},\;\;m,n\in\mathbb{Z},
\end{eqnarray*}
which contradicts with $q\neq2$.

{\bf Case 2:} $\lambda\neq0$.

In this case, we have $\varphi(m,-m)=\frac{\lambda-qm}{\lambda}\varphi(0,0)$ for any $m\in\mathbb{Z}$. So
\begin{eqnarray*}
    \frac{\lambda-qm}{\lambda}\varphi(0,0)-\frac{\lambda+qm}{\lambda}\varphi(0,0)=\varphi(m,-m)-\varphi(-m,m)=\frac{m^3-m}{12},\;\;m\in\mathbb{Z}.
\end{eqnarray*}
Hence $\varphi(0,0)=-\frac{m^2-1}{24q}\lambda$ for any $m\in\mathbb{Z}^\times$, which yields a contradiction.
\item[(b)] Suppose that there exists a compatible graded $2$-pre-Lie algebra structure on $\mathcal{V}$ satisfying Eqs. (\ref{V1}) and (\ref{V2}). In this case, Eqs. (\ref{rewrite}), (\ref{V01}) and (\ref{V02}) still hold.

{\bf Case 1:} $\lambda=0$.

In this case, similar to the calculation in Case 1 of Item (a), we still obtain $\varphi(m,-m)=\frac{m^3-m}{24}$ for any $m\in\mathbb{Z}^\times$. The difference is that this no longer yields a contradiction. \delete{If $m=-n$, then by Eq. (\ref{rewrite}), we have
\begin{eqnarray*}
    W_n\circ W_m&=&\frac{1}{2}(m-n)W_{m+n}+\varphi(n,m){\bf c}=\frac{1}{2}(m-n)W_{m+n}+\varphi(n,-n){\bf c}\\&=&\frac{1}{2}(m-n)W_{m+n}+\frac{n^3-n}{24}{\bf c}.
\end{eqnarray*}
By Eq. (\ref{vir-bracket}), we get Eq. (\ref{thm-vir-class}).} If $m\neq-n$, then by definitions of $2$-pre-Lie algebras and the Virasoro algebra, $\varphi:\mathbb{Z}\times \mathbb{Z}\to\mathbb{C}$ satisfies
\begin{eqnarray}
    \label{b-case1-1}&&\varphi(m,n)=\varphi(n,m),\\
    \label{b-case1-2}&&(n-m)\varphi(0,m+n)=(m+n)\varphi(m,n).
\end{eqnarray}
Let $m\neq0,n=0$ in Eq. (\ref{b-case1-2}). We get $\varphi(0,m)=\varphi(m,0)=0$ for any $m\in\mathbb{Z}^\times$. So for any $m,n\in\mathbb{Z}$ with $m\neq-n$, $\varphi(0,m+n)=0$. Then it follows from Eq. (\ref{b-case1-2}) that $\varphi(m,n)=0$ when $m\neq-n$. Therefore, we get $\varphi(m,n)=\frac{m^3-m}{24}\delta_{m+n,0}$ for any $m$, $n\in \mathbb{Z}$. Then the result follows by Eq. (\ref{rewrite}). \delete{Thus by Eq. (\ref{rewrite}), we have
\begin{eqnarray*}
    W_n\circ W_m=\frac{1}{2}(m-n)W_{m+n}.
\end{eqnarray*}
By Eq. (\ref{vir-bracket}), we obtain Eq. (\ref{thm-vir-class}).}

{\bf Case 2:} $\lambda\neq0$.

The proof for this case is the same as Case 2 in Item (a), and we still get a contradiction. Therefore, this case can't exist.
\end{itemize}

This completes the proof.\delete{The proof follows by an argument similar to that of \cite[Theorem 4.2]{BG}.}
\end{proof}
\begin{rmk}
    By the results in \cite{KCB}, there exist compatible graded $1$-pre-Lie algebras, i.e., pre-Lie algebras on the Virasoro algebra.
\end{rmk}

\section{Compatible root-graded $q$-pre-Lie algebra structures on finite-dimensional complex simple Lie algebras}
In this section, we classify compatible root-graded $q$-pre-Lie algebra structures on any finite-dimensional complex simple Lie algebra $\mathfrak{g}$. For $\mathfrak{sl}_2(\mathbb{C})$, such structures exist precisely when $q=2$ or $q=-1$. For any other finite-dimensional complex simple Lie algebras, we prove that no compatible root-graded $q$-pre-Lie algebra structure exists when $q\neq 2$, while for $q=2$ the only possible structure is the trivial one given by $a\circ b=\frac{1}{2}[a,b]$ for any $a,b\in\mathfrak{g}$.

\delete{For all other finite-dimensional complex simple Lie algebras, we prove that no compatible root-graded $2$-pre-Lie algebra structure exists. Together with the known result for $q=-1$ from \cite{BG2}, this establishes that $\mathfrak{sl}_2(\mathbb{C})$ is the unique such Lie algebra admitting such structures for $q=2$ or $q=-1$.}

\subsection{Compatible root-graded $q$-pre-Lie algebra structures on $\mathfrak{sl}_2(\mathbb{C})$}

Recall that $\mathfrak{sl}_2(\mathbb{C})={\rm span}_{\mathbb{C}}\{e_{12}=\begin{pmatrix} 0 &1 \\0&0 \end{pmatrix},e_{21}=\begin{pmatrix}  0&0\\1&0 \end{pmatrix},h_1=\begin{pmatrix} 1&0\\0&-1 \end{pmatrix} \}$ is a $3$-dimensional simple Lie algebra satisfying
\begin{eqnarray*}
    [h_1,e_{12}]=2e_{12},\;\;[h_1,e_{21}]=-2e_{21},\;\;[e_{12},e_{21}]=h_1.
\end{eqnarray*}

Recall \cite{Car,Hu} that a finite-dimensional complex simple Lie algebra $\mathfrak{g}$ has the {\bf root space decomposition} $\mathfrak{g}=\mathfrak{h}\oplus\bigoplus_{\delta\in\Phi}\mathfrak{g}_\delta$, where $\mathfrak{h}$ is the {\bf Cartan subalgebra} of $\mathfrak{g}$, $\Phi\subseteq \mathfrak{h}^*$ is the {\bf root system} of $\mathfrak{g}$ and for any $\delta\in\Phi$, $\mathfrak{g}_\delta=\{x\in\mathfrak{g}\mid [h,x]=\delta(h)x,h\in\mathfrak{h} \}$ is the {\bf root space}.
\begin{defi}
    Let $\mathfrak{g}$ be a finite-dimensional complex simple Lie algebra. Then a compatible $q$-pre-Lie algebra structure $(\mathfrak{g},\circ)$ on $\mathfrak{g}$ is called {\bf root-graded} if $\mathfrak{g}_{\delta_1}\circ\mathfrak{g}_{\delta_2}\subseteq\mathfrak{g}_{\delta_1+\delta_2}$ for any $\delta_1,\delta_2\in\Phi\cup\{0\}$.
\end{defi}

\delete{Next, for some specific $q\in{\bf k}$, we give two examples of compatible root-graded $q$-pre-Lie algebra structures on $\mathfrak{sl}_2(\mathbb{C})$.
\begin{ex}\label{ex-root}
    \begin{enumerate}
        \item[(1)] For $q=2$, let $\circ$ be the binary operation on $\mathfrak{sl}_2(\mathbb{C})$ given by
    \begin{eqnarray}
        \label{ex-sl21}
        &&h_1\circ e_{12}=e_{12},\;e_{12}\circ h_1=-e_{12},\;h_1\circ e_{21}=-e_{21},\;e_{21}\circ h_1=e_{21},\\
        \label{ex-sl22}
        &&e_{12}\circ e_{21}=\frac{1}{2}h_1,\;e_{21}\circ e_{12}=-\frac{1}{2}h_1,\;h_1\circ h_1=e_{12}\circ e_{12}=e_{21}\circ e_{21}=0.
    \end{eqnarray}
    Then $(\mathfrak{sl}_2(\mathbb{C}),\circ)$ is a compatible $2$-pre-Lie algebra structure on $\mathfrak{sl}_2(\mathbb{C})$. Note that $\mathfrak{sl}_2(\mathbb{C})=\mathfrak{sl}_2(\mathbb{C})_0\oplus\mathfrak{sl}_2(\mathbb{C})_\delta\oplus\mathfrak{sl}_2(\mathbb{C})_{-\delta}$, where $\mathfrak{sl}_2(\mathbb{C})_0=\mathbb{C}h_1$ is the Cartan subalgebra of $\mathfrak{sl}_2(\mathbb{C})$, $\mathfrak{sl}_2(\mathbb{C})_\delta=\mathbb{C}e_{12}$, $\mathfrak{sl}_2(\mathbb{C})_{-\delta}=\mathbb{C}e_{21}$ and $\delta:\mathbb{C}h_1\to \mathbb{C}$ is a linear map defined by $\delta(h_1)=2$. It is direct to see that $(\mathfrak{sl}_2(\mathbb{C}),\circ)$ is a compatible root-graded $2$-pre-Lie algebra structure on $\mathfrak{sl}_2(\mathbb{C})$.
    \item[(2)] For $q=-1$, see \cite[Example 2.21]{LB}.
    \end{enumerate}
\end{ex}}

\delete{Recall that a representation $V$ of $\mathfrak{sl}_2(\mathbb{C})$ is called a {\bf weight representation} if $V=\bigoplus_{\lambda\in\mathbb{C}}V_\lambda$, where $V_\lambda=\{v\in V\mid h_1.v=\lambda v \}$. In this case, $V_\lambda$ is called a {\bf weight space} of weight $\lambda$ and $\{\lambda\in\mathbb{C}\mid V_\lambda\neq0 \}$ is called the {\bf weight set} of $V$. A nonzero vector $v$ is called a {\bf highest weight vector} of weight $\lambda$ if $e_{12}.v=0$. Similarly, a nonzero vector $v$ is called a {\bf lowest weight vector} of weight $\lambda$ if $e_{21}.v=0$.

The following two lemmas are familiar to us.
\begin{lem}\label{lem-ref1}\cite{Hu,Ma}
    For any $m\in\mathbb{N}$, let ${\rm V}(m)=\bigoplus_{i=0}^m\mathbb{C}v_i$ be an $(m+1)$-dimensional vector space. Then ${\rm V}(m)$ is an irreducible weight representation of $\mathfrak{sl}_2(\mathbb{C})$ with
    \begin{eqnarray*}
        h_1.v_i=(m-2i)v_i,\;e_{12}.v_i=(m-i+1)v_{i-1},\;e_{21}.v_i=(i+1)v_{i+1},\;\;0\leq i\leq m,
    \end{eqnarray*}
    where $v_{-1}=v_{m+1}=0$. Furthermore, any nonzero weight space of ${\rm V}(m)$ is $1$-dimensional, the weight set of ${\rm V}(m)$ is $\{m,m-2,m-4,\ldots,-m+4,-m+2,-m \}$.
\end{lem}
\begin{lem}\label{lem-ref2}\cite{Hu,Ma}
    For any $m\in\mathbb{N}$, let $V$ be an $(m+1)$-dimensional irreducible representation of $\mathfrak{sl}_2(\mathbb{C})$. Then $V\cong {\rm V}(m)$ as representations of $\mathfrak{sl}_2(\mathbb{C})$. Furthermore, any finite-dimensional representation of $\mathfrak{sl}_2(\mathbb{C})$ can be decomposed into a direct sum of finite-dimensional irreducible ones.
\end{lem}}

Now we assume that $(\mathfrak{sl}_2(\mathbb{C}),\circ)$ is a compatible root-graded $q$-pre-Lie algebra on $\mathfrak{sl}_2(\mathbb{C})$. It follows from the root space decomposition of $\mathfrak{sl}_2(\mathbb{C})$ \delete{given in Example \ref{ex-root} }that
\begin{eqnarray}
    \label{cof1}
    &&h_1\circ e_{12}=\alpha_1e_{12},\;e_{12}\circ h_1=(\alpha_1-2)e_{12},    \\
    \label{cof2}
    &&h_1\circ e_{21}=\beta_1e_{21},\;e_{21}\circ h_1=(\beta_1+2)e_{21},    \\
    \label{cof3}
    &&e_{12}\circ e_{21}=\gamma_1h_1,\;e_{21}\circ e_{12}=(\gamma_1-1)h_1,    \\
    \label{cof4}
    &&h_1\circ h_1=\lambda_1h_1,\;e_{12}\circ e_{12}=e_{21}\circ e_{21}=0,
\end{eqnarray}
where $\alpha_1,\beta_1,\gamma_1,\lambda_1\in\mathbb{C}$.

\begin{lem}\label{alpha1beta1}
    Under the notations in Eqs. (\ref{cof1})-(\ref{cof4}), we have $\alpha_1\neq2$ and $\beta_1\neq-2$.
\end{lem}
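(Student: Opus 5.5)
The plan is to test Eq. (\ref{q-pre-Lie-1}) on two well-chosen triples of basis vectors. In each triple the product $e_{12}\circ e_{12}=0$ (resp. $e_{21}\circ e_{21}=0$) from Eq. (\ref{cof4}) kills one of the two terms on the left-hand side. This leaves a scalar identity in which $\alpha_1-2$ (resp. $\beta_1+2$) appears as a factor on the left but not on the right. No case distinction on $q$ is needed, so the argument works uniformly for every $q$, including $q=0$.

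\textbf{Step 1 ($\alpha_1\neq 2$).} Take $a=e_{12}$, $b=e_{21}$, $c=e_{12}$ in Eq. (\ref{q-pre-Lie-1}).
\begin{itemize}
\item Right-hand side: since $[e_{12},e_{21}]=h_1$, it equals $h_1\circ e_{12}=\alpha_1 e_{12}$ by Eq. (\ref{cof1}).
\item Left-hand side: the term $e_{21}\circ(e_{12}\circ e_{12})$ vanishes by Eq. (\ref{cof4}). By Eqs. (\ref{cof3}) and (\ref{cof1}),
\[
e_{12}\circ(e_{21}\circ e_{12})=(\gamma_1-1)\,e_{12}\circ h_1=(\gamma_1-1)(\alpha_1-2)e_{12}.
\]
\end{itemize}
Comparing coefficients of $e_{12}$ gives
\[
\alpha_1=q(\gamma_1-1)(\alpha_1-2).
\]
If $\alpha_1=2$, this reads $2=0$, a contradiction. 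Hence $\alpha_1\neq2$.

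\textbf{Step 2 ($\beta_1\neq -2$).} By the symmetric choice $a=e_{21}$, $b=e_{12}$, $c=e_{21}$, we have $[e_{21},e_{12}]=-h_1$.
\begin{itemize}
\item Right-hand side: it equals $-h_1\circ e_{21}=-\beta_1e_{21}$ by Eq. (\ref{cof2}).
\item Left-hand side: the term $e_{12}\circ(e_{21}\circ e_{21})$ vanishes by Eq. (\ref{cof4}). By Eqs. (\ref{cof3}) and (\ref{cof2}),
\[
q\,e_{21}\circ(e_{12}\circ e_{21})=q\gamma_1\,e_{21}\circ h_1=q\gamma_1(\beta_1+2)e_{21}.
\]
\end{itemize}
This gives
\[
q\gamma_1(\beta_1+2)=-\beta_1.
\]
If $\beta_1=-2$, the identity becomes $0=2$, a contradiction. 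Hence $\beta_1\neq-2$.

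There is no real obstacle here. The only thing to verify carefully is the bookkeeping of the parametrization in Eqs. (\ref{cof1})--(\ref{cof4}), namely $e_{12}\circ h_1=(\alpha_1-2)e_{12}$ and $e_{21}\circ e_{12}=(\gamma_1-1)h_1$. These come from compatibility with the brackets $[h_1,e_{12}]=2e_{12}$ and $[e_{12},e_{21}]=h_1$, and I would double-check them before concluding.
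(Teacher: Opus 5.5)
Your proposal is correct and is essentially the paper's proof. The paper also substitutes $a=e_{12}$, $b=e_{21}$, $c=e_{12}$ into Eq.~(\ref{q-pre-Lie-1}) and, assuming $\alpha_1=2$, derives $2e_{12}=0$. It leaves $\beta_1\neq-2$ as ``similar''; your symmetric triple $(e_{21},e_{12},e_{21})$, giving $q\gamma_1(\beta_1+2)=-\beta_1$, is that omitted case, and your checks of the parametrization in Eqs.~(\ref{cof1})--(\ref{cof4}) are right.
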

\begin{proof}
    Suppose that $\alpha_1=2$. Note that
    \begin{eqnarray*}
        q(e_{12}\circ(e_{21}\circ e_{12})-e_{21}\circ(e_{12}\circ e_{12}))=[e_{12},e_{21}]\circ e_{12},
    \end{eqnarray*}
    we obtain $2e_{12}=0$, which is a contradiction. Thus $\alpha_1\neq2$. The remaining part is proved similarly.
\end{proof}
\begin{thm}\label{thm-sl2}
    There exists a compatible root-graded $q$-pre-Lie algebra structure $(\mathfrak{sl}_2(\mathbb{C}),\circ)$ on $\mathfrak{sl}_2(\mathbb{C})$ if and only if $q=2$ or $q=-1$. For the case $q=2$, the structure is given by $a\circ b=\frac{1}{2}[a,b]$ for any $a,b\in \mathfrak{sl}_2(\mathbb{C})$. For the case $q=-1$, the structure is given by \cite[Example 2.21]{LB}.
\end{thm}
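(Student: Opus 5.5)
The plan is to reduce the existence problem to a small polynomial system in $q,\alpha_1,\beta_1,\gamma_1,\lambda_1$ and solve it. By root-gradedness and compatibility with the bracket, any compatible root-graded structure has the form (\ref{cof1})--(\ref{cof4}). These formulas already encode $[a,b]=a\circ b-b\circ a$. So the structure is a $q$-pre-Lie algebra iff Eq. (\ref{q-pre-Lie-1}) holds on all basis triples and, when $q\neq 1$, Eq. (\ref{q-pre-Lie-2}) holds. Both sides are multilinear and each side of Eq. (\ref{q-pre-Lie-2}) is alternating, so for Eq. (\ref{q-pre-Lie-2}) only the triple $(h_1,e_{12},e_{21})$ matters. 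For Eq. (\ref{q-pre-Lie-1}) the identity is antisymmetric in $(a,b)$, and grading kills many triples. This leaves a short list of triples, and for each I record the resulting scalar equation.

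\textbf{Step 1: the equations.} I expect the following, with Lemma \ref{alpha1beta1} used to cancel the factors $\alpha_1-2$ and $\beta_1+2$:
\begin{itemize}
\item triple $(h_1,e_{12},h_1)$: $q(\alpha_1-2)(\alpha_1-\lambda_1)=2(\alpha_1-2)$, hence $q(\alpha_1-\lambda_1)=2$;
\item triple $(h_1,e_{21},h_1)$: $q(\beta_1-\lambda_1)=-2$;
\item triple $(e_{12},e_{21},e_{12})$: $q(\gamma_1-1)(\alpha_1-2)=\alpha_1$;
\item triple $(e_{12},e_{21},e_{21})$: $-q\gamma_1(\beta_1+2)=\beta_1$;
\item triple $(e_{12},e_{21},h_1)$: $q\big((\beta_1+2)\gamma_1-(\alpha_1-2)(\gamma_1-1)\big)=\lambda_1$;
\item triples $(h_1,e_{12},e_{21})$ and $(h_1,e_{21},e_{12})$: these should reduce to identities once the first two relations are used.
\end{itemize}
For $q\neq1$, Eq. (\ref{q-pre-Lie-2}) on $(h_1,e_{12},e_{21})$ gives $4\gamma_1+\lambda_1-2=0$.

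\textbf{Step 2: solve the system.} The first relation already forces $q\neq0$. Then $\alpha_1=\lambda_1+\frac{2}{q}$, $\beta_1=\lambda_1-\frac{2}{q}$ and, for $q\neq1$, $\lambda_1=2-4\gamma_1$. Substituting into the two quadratic relations from $(e_{12},e_{21},e_{12})$ and $(e_{12},e_{21},e_{21})$ gives two polynomial equations in $(q,\gamma_1)$. I would take their difference and their sum to eliminate $\gamma_1$ and obtain a polynomial in $q$. I expect its roots, outside $q=1$, to be exactly $q=2$ and $q=-1$, with the remaining relation checked to be consistent.
\begin{itemize}
\item For $q=2$ one should find $\gamma_1=\frac12$, $\lambda_1=0$, $\alpha_1=1$, $\beta_1=-1$. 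This is precisely $a\circ b=\frac12[a,b]$, the trivial structure of the earlier Remark.
\item For $q=-1$ the solution should match \cite[Example 2.21]{LB}.
\end{itemize}

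\textbf{Step 3: the case $q=1$.} Here Eq. (\ref{q-pre-Lie-2}) is vacuous and the constraint $\lambda_1=2-4\gamma_1$ is lost. I would instead solve the remaining equations directly:
\[
\alpha_1-\lambda_1=2,\qquad \beta_1-\lambda_1=-2,\qquad (\gamma_1-1)(\alpha_1-2)=\alpha_1,\qquad -\gamma_1(\beta_1+2)=\beta_1,
\]
together with the $(e_{12},e_{21},h_1)$ relation, and show they are inconsistent. As a fallback, I can invoke the classical fact that a semisimple Lie algebra admits no compatible pre-Lie structure, since a pre-Lie structure would make the algebra $\mathfrak{g}=[\mathfrak{g},\mathfrak{g}]$ act by $L_\circ$ with $\mathrm{tr}\,R$ issues; the direct computation is cleaner, though.

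The main obstacle is the elimination in Step 2. I must make sure no spurious roots appear from dividing by expressions like $\alpha_1-2$ or $\beta_1+2$ (these are justified by Lemma \ref{alpha1beta1}). I also need to verify, for $q=2$ and $q=-1$, that every basis triple of Eq. (\ref{q-pre-Lie-1}) holds, not just the selected ones. For the converse direction of the theorem, it suffices to exhibit the two solutions and check them.
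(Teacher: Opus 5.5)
Your proposal is correct and is essentially the paper's proof. It uses the same parametrization \eqref{cof1}--\eqref{cof4}, Lemma \ref{alpha1beta1} to cancel $\alpha_1-2$ and $\beta_1+2$, the same relations from Eq.~(\ref{q-pre-Lie-1}), and Eq.~(\ref{q-pre-Lie-2}) on $(h_1,e_{12},e_{21})$ to get $\lambda_1+4\gamma_1-2=0$. Your elimination goes through: after substituting $\lambda_1=2-4\gamma_1$, the sum of the two quadratic relations forces $\gamma_1=\frac12$, hence $\lambda_1=0$ and $q^2-q-2=0$.

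The differences are minor. You treat $q=1$ separately, whereas the paper reads $q\neq 1$ off the same two relations (at $q=1$ they force $\lambda_1=0$, i.e.\ $\alpha_1=2$). Also, your complete list of triples lets you verify existence for $q=2$ and $q=-1$ directly, instead of citing the remark on trivial $2$-pre-Lie structures and \cite[Example 2.21]{LB}.
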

\begin{proof}
    Assume that $(\mathfrak{sl}_2(\mathbb{C}),\circ)$ is a compatible root-graded $q$-pre-Lie algebra structure on $\mathfrak{sl}_2(\mathbb{C})$. By Eq. (\ref{q-pre-Lie-1}), we obtain
    \begin{eqnarray*}
        &&q(e_{12}\circ(h_1\circ h_1)-h_1\circ(e_{12}\circ h_1))=[e_{12},h_1]\circ h_1=-2e_{12}\circ h_1,    \\
        &&q(e_{21}\circ(h_1\circ h_1)-h_1\circ(e_{21}\circ h_1))=[e_{21},h_1]\circ h_1=2e_{21}\circ h_1.
    \end{eqnarray*}
    So
    \begin{eqnarray*}
        q(\lambda_1-\alpha_1)(\alpha_1-2)=-2(\alpha_1-2),\;q(\lambda_1-\beta_1)(\beta_1+2)=2(\beta_1+2).
    \end{eqnarray*}
    Hence $q\neq0$. It follows from Lemma \ref{alpha1beta1} that $\alpha_1=\lambda_1+\frac{2}{q}$ and $\beta_1=\lambda_1-\frac{2}{q}$. Using Eq. (\ref{q-pre-Lie-1}) again, we have
    \begin{eqnarray*}
        &&q(e_{21}\circ(e_{12}\circ e_{12})-e_{12}\circ(e_{21}\circ e_{12}))=[e_{21},e_{12}]\circ e_{12}=-h_1\circ e_{12},    \\
        &&q(e_{12}\circ(e_{21}\circ e_{21})-e_{21}\circ(e_{12}\circ e_{21}))=[e_{12},e_{21}]\circ e_{21}=h_1\circ e_{21}.
    \end{eqnarray*}
    Hence
    \begin{eqnarray*}
        (\gamma_1-1)(\lambda_1+\frac{2}{q}-2)=\frac{1}{q}(\lambda_1+\frac{2}{q}),\;-\gamma_1(\lambda_1-\frac{2}{q}+2)=\frac{1}{q}(\lambda_1-\frac{2}{q}).
    \end{eqnarray*}
    Thus $\lambda_1^2=\frac{4(q-2)(q+1)(q-1)}{(q+2)q^2}$ and $q\neq1$. By Eq. (\ref{q-pre-Lie-2}), we have
    \begin{eqnarray*}
        0=[e_{12},e_{21}]\circ h_1+[e_{21},h_1]\circ e_{12}+[h_1,e_{12}]\circ e_{21}=(\lambda_1+4\gamma_1-2)h_1,
    \end{eqnarray*}
    which implies $\lambda_1=\frac{4}{q}-2$ or $\frac{2}{q}+2$. If $\lambda_1=\frac{4}{q}-2$, then $q=2$. In this case, we have $\alpha_1=1,\beta_1=-1,\gamma_1=\frac{1}{2},\lambda_1=0$. Therefore, the compatible root-graded $2$-pre-Lie algebra structure on $\mathfrak{sl}_2(\mathbb{C})$ is precisely given by $a\circ b=\frac{1}{2}[a,b]$ for any $a,b\in \mathfrak{sl}_2(\mathbb{C})$. If $\lambda_1=\frac{2}{q}+2$, then $q=-1$. In this case, the compatible root-graded $-1$-pre-Lie algebra structure on $\mathfrak{sl}_2(\mathbb{C})$ is given by \cite[Example 2.21]{LB}. This completes the proof.
\end{proof}

\subsection{Compatible root-graded $q$-pre-Lie algebra structures on any finite-dimensional complex simple Lie algebra}

Note that compatible root-graded $(-1)$-pre-Lie algebra structures on  finite-dimensional complex simple Lie algebras have been investigated in \cite{BG2}. By Theorem \ref{thm-sl2}, we only need to consider compatible root-graded $2$-pre-Lie algebra structures on any finite-dimensional complex simple Lie algebra.

Let $(\mathfrak{g},[\cdot,\cdot])$ be a finite-dimensional complex simple Lie algebra. Fix a Cartan subalgebra $\mathfrak{h}$ and let $\Phi$ be the root system and $l$ be the rank of $\mathfrak{g}$. Choose a Chevalley basis $\{e_\alpha,h_i\mid \alpha\in\Phi,1\leq i\leq l \}$ satisfying
\begin{eqnarray*}
    [[e_\alpha,e_{-\alpha}],e_\alpha]=2e_\alpha,\;\;\alpha\in\Phi
\end{eqnarray*}
and for $\beta,\gamma\in\Phi$ with $\beta+\gamma\in\Phi$,
\begin{eqnarray*}
    [e_\beta,e_\gamma]=N_{\beta,\gamma}e_{\beta+\gamma},
\end{eqnarray*}
where $N_{\beta,\gamma}\in\mathbb{C}^\times$ and $N_{\beta,\gamma}=-N_{\gamma,\beta}$.

\delete{$(\mathfrak{g},\circ)$ with the operation $\circ$ given by $a\circ b=\frac{1}{2}[a,b]$ for any $a,b\in\mathfrak{g}$ is clearly a compatible root-graded $2$-pre-Lie algebra structure on $\mathfrak{g}$. We will show that it is the only one.}

For any $\alpha\in\Phi$, let $S_\alpha={\rm span}_\mathbb{C}\{e_\alpha,e_{-\alpha},h_\alpha=[e_\alpha,e_{-\alpha}] \}$ be a $3$-dimensional Lie algebra. It is clear that $S_\alpha\cong\mathfrak{sl}_2(\mathbb{C})$ under the following map.
\begin{eqnarray*}
    e_\alpha\mapsto e_{12},\;\;e_{-\alpha}\mapsto e_{21},\;\;h_\alpha\mapsto h_1.
\end{eqnarray*}
By the proof of Theorem \ref{thm-sl2}, we have the following result.
\begin{lem}\label{final1}
    For any $\alpha\in\Phi$, suppose that $(S_\alpha,\circ)$ is a compatible $2$-pre-Lie algebra structure on $S_\alpha$. Then $a\circ b=\frac{1}{2}[a,b]$ for any $a,b\in S_\alpha$. \delete{the following equations hold.
    \begin{eqnarray*}
        &&h_\alpha\circ e_\alpha=e_\alpha,\;\;e_\alpha\circ h_\alpha=-e_\alpha,\;\;h_\alpha\circ e_{-\alpha}=-e_{-\alpha},\;\;e_{-\alpha}\circ h_\alpha=e_{-\alpha}  ,\\
        && e_\alpha\circ e_{-\alpha}=\frac{1}{2}h_\alpha,\;\;e_{-\alpha}\circ e_\alpha=-\frac{1}{2}h_\alpha,\;\; h_\alpha\circ h_\alpha=e_\alpha\circ e_\alpha=e_{-\alpha}\circ e_{-\alpha}=0 .
    \end{eqnarray*}}
\end{lem}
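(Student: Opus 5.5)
The plan is to transport the structure along the isomorphism $S_\alpha\cong\mathfrak{sl}_2(\mathbb{C})$ given by $e_\alpha\mapsto e_{12}$, $e_{-\alpha}\mapsto e_{21}$, $h_\alpha\mapsto h_1$, and then rerun the computation in the proof of Theorem \ref{thm-sl2} with $q=2$. This isomorphism carries a compatible $2$-pre-Lie algebra structure on $S_\alpha$ to one on $\mathfrak{sl}_2(\mathbb{C})$, because both defining identities (\ref{q-pre-Lie-1}) and (\ref{q-pre-Lie-2}) are expressed only through $\circ$ and its commutator.

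The ansatz (\ref{cof1})--(\ref{cof4}) uses root-gradedness with respect to the Cartan subalgebra $\mathbb{C}h_\alpha$. By this I mean $h_\alpha\circ e_{\pm\alpha}\in\mathbb{C}e_{\pm\alpha}$, $e_\alpha\circ e_{-\alpha}\in\mathbb{C}h_\alpha$, $h_\alpha\circ h_\alpha\in\mathbb{C}h_\alpha$ and $e_{\pm\alpha}\circ e_{\pm\alpha}=0$. I will take this as the implicit hypothesis of the lemma, as in the setting where it is applied (and as in the phrase ``by the proof of Theorem \ref{thm-sl2}''). Once it is granted, compatibility with the bracket fixes the paired coefficients. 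Hence $\circ$ on $S_\alpha$ is determined by the four scalars $\alpha_1,\beta_1,\gamma_1,\lambda_1$ of (\ref{cof1})--(\ref{cof4}).

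Next I specialize the relations from the proof of Theorem \ref{thm-sl2} to $q=2$. Lemma \ref{alpha1beta1} holds for every $q$, so it gives $\alpha_1\neq2$ and $\beta_1\neq-2$. The two instances of (\ref{q-pre-Lie-1}) involving $h_1\circ h_1$ then yield $\alpha_1=\lambda_1+1$ and $\beta_1=\lambda_1-1$. The two instances involving $e_{12}\circ e_{12}$ and $e_{21}\circ e_{21}$ give
\[
(\gamma_1-1)(\lambda_1-1)=\tfrac12(\lambda_1+1),\qquad -\gamma_1(\lambda_1+1)=\tfrac12(\lambda_1-1).
\]
Eliminating $\gamma_1$ gives $\lambda_1^2=\frac{4(q-2)(q+1)(q-1)}{(q+2)q^2}$, which is $0$ at $q=2$, so $\lambda_1=0$. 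Then $\gamma_1=\tfrac12$, $\alpha_1=1$ and $\beta_1=-1$; these values also satisfy (\ref{q-pre-Lie-2}), since $\lambda_1+4\gamma_1-2=0$.

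Finally I read off the products: $h_\alpha\circ e_{\pm\alpha}=\pm e_{\pm\alpha}$, $e_{\pm\alpha}\circ h_\alpha=\mp e_{\pm\alpha}$, $e_\alpha\circ e_{-\alpha}=\frac12h_\alpha=-e_{-\alpha}\circ e_\alpha$, and all remaining products of basis elements vanish. Each of these equals $\frac12[a,b]$ on the basis, so bilinearity gives $a\circ b=\frac12[a,b]$ on all of $S_\alpha$. The only delicate step is justifying the graded ansatz on $S_\alpha$; the rest is the elimination already carried out in the proof of Theorem \ref{thm-sl2}, with $q=2$ substituted.
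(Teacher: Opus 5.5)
Your proposal is the paper's argument: you transport the structure along $S_\alpha\cong\mathfrak{sl}_2(\mathbb{C})$ and rerun the elimination from the proof of Theorem \ref{thm-sl2} at $q=2$, which forces $\lambda_1=0$, $\gamma_1=\frac12$, $\alpha_1=1$, $\beta_1=-1$; the root-graded ansatz you flag is the same assumption the paper's one-line proof makes without stating it. That hypothesis can in fact be dropped: the projection onto any trivial summand of the representation $2L_\circ$ would vanish on $[S_\alpha,S_\alpha]=S_\alpha$, so $2L_\circ\cong\mathrm{ad}$, and an intertwiner $\phi$ from $2L_\circ$ to $\mathrm{ad}$ then satisfies $2\phi([a,b])=[\phi(a),b]+[a,\phi(b)]$, which on $\mathfrak{sl}_2(\mathbb{C})$ forces $\phi$ to be a scalar and hence $2L_\circ=\mathrm{ad}$.
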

\begin{lem}\label{final2}
    Let $\beta,\gamma\in\Phi$. Suppose that $(\mathfrak{g},\circ)$ is a compatible $2$-pre-Lie algebra structure on $\mathfrak{g}$. Then we have
    \begin{eqnarray*}
        e_\beta\circ e_\gamma=\frac{1}{2}[e_\beta,e_\gamma].
    \end{eqnarray*}
\end{lem}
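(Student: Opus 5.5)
The plan is to combine root-gradedness (implicit here, as in the rest of the section) with Lemma \ref{final1} and the identities (\ref{q-pre-Lie-1}), (\ref{q-pre-Lie-2}) at $q=2$, splitting into cases according to $\beta+\gamma$. Root-gradedness gives $e_\beta\circ e_\gamma\in\mathfrak{g}_{\beta+\gamma}$, read as $\mathfrak{h}$ when $\beta+\gamma=0$ and as $0$ when $\beta+\gamma\notin\Phi\cup\{0\}$.

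\textbf{Case $\beta+\gamma\notin\Phi\cup\{0\}$.} Both sides vanish: $e_\beta\circ e_\gamma=0$ by gradedness, and $[e_\beta,e_\gamma]=0$. This includes $\gamma=\beta$, since $2\beta\notin\Phi$.

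\textbf{Case $\gamma=-\beta$.} I want to reduce to Lemma \ref{final1}. The difficulty is that $S_\beta$ need not be closed under $\circ$, because $h_\beta\circ h_\beta$ may be any element of $\mathfrak{h}$. So I will first prove a preliminary statement: for every $h\in\mathfrak{h}$ and every root $\delta$,
\[
h\circ e_\delta=\tfrac12\delta(h)e_\delta,\qquad e_\delta\circ h=-\tfrac12\delta(h)e_\delta.
\]
By gradedness, $h\circ e_\delta=c_\delta(h)e_\delta$ for a linear functional $c_\delta$, and compatibility forces $e_\delta\circ h=(c_\delta(h)-\delta(h))e_\delta$. I then redo the computation in the proof of Theorem \ref{thm-sl2} without assuming closure:
\begin{enumerate}
\item[(i)] Apply (\ref{q-pre-Lie-1}) to $(e_\delta,h,h')$. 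This shows $c_\delta(h\circ h')$ is determined by $c_\delta(h)$, $c_\delta(h')$ and $\delta$.
\item[(ii)] Apply (\ref{q-pre-Lie-1}) to $(e_{-\delta},e_\delta,e_\delta)$ and $(e_\delta,e_{-\delta},e_{-\delta})$. Here $e_\delta\circ e_{-\delta}=:t_\delta\in\mathfrak{h}$ with $e_{-\delta}\circ e_\delta=t_\delta-h_\delta$.
\item[(iii)] Apply (\ref{q-pre-Lie-2}) to $(e_\delta,e_{-\delta},h)$.
\end{enumerate}
Only the scalars $c_{\pm\delta}(\cdot)$ and $\delta(t_\delta)$ enter, so the argument of Theorem \ref{thm-sl2} should still force $c_\delta(h_\delta)=1$ and $t_\delta=\tfrac12h_\delta$ modulo $\ker\delta$. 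To get $t_\delta$ itself, pair $t_\delta$ against every root $\epsilon$ via (\ref{q-pre-Lie-1}) applied to $(e_\delta,e_{-\delta},e_\epsilon)$. This should give $c_\epsilon(t_\delta)=c_\epsilon(\tfrac12h_\delta)$ for all $\epsilon$, and since roots span $\mathfrak{h}^*$, $t_\delta=\tfrac12h_\delta$.

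\textbf{Case $\beta+\gamma\in\Phi$.} Write $e_\beta\circ e_\gamma=xe_{\beta+\gamma}$; compatibility gives $e_\gamma\circ e_\beta=(x-N_{\beta,\gamma})e_{\beta+\gamma}$. Using the preliminary statement, apply (\ref{q-pre-Lie-1}) to the triple $(e_{-\beta},e_\beta,e_\gamma)$:
\[
2\bigl(e_{-\beta}\circ(e_\beta\circ e_\gamma)-e_\beta\circ(e_{-\beta}\circ e_\gamma)\bigr)=-h_\beta\circ e_\gamma=-\tfrac12\gamma(h_\beta)e_\gamma .
\]
This is a linear relation between $x$ and the analogous coefficient for the pair $(-\beta,\beta+\gamma)$. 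Together with the cyclic relation (\ref{q-pre-Lie-2}) for $(e_\beta,e_\gamma,e_{-\beta-\gamma})$, in which the opposite-root products are already known to be $\tfrac12[\cdot,\cdot]$, I expect a nondegenerate linear system along the $\beta$-string through $\gamma$. Its unique solution should be $x=\tfrac12N_{\beta,\gamma}$, because the trivial structure gives one solution. I will run an induction along root strings starting from the end of the string, where one of the products vanishes by gradedness, so that at each step only one unknown remains.

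\textbf{Main obstacle.} I expect the preliminary statement to be hardest, specifically removing the possible $\ker\delta$-component of $h_\delta\circ h_\delta$ and of $t_\delta$, which the $\mathfrak{sl}_2$ computation alone cannot detect. If pairing against all roots is not enough, I would fall back on Proposition \ref{1/qLcirc}: $2L_\circ$ is a representation, so the weights of $2L_\circ(\mathfrak{h})$ acting on $\mathfrak{g}$ must be compatible with $\mathfrak{sl}_2$-integrality on each $S_\delta$. I would use this to pin down $c_\delta=\tfrac12\delta$ on all of $\mathfrak{h}$.
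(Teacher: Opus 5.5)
\textbf{There is a genuine gap in the case $\beta+\gamma\in\Phi$, which is the real content of the lemma.} Your concern about the opposite-root case is legitimate. The paper's proof takes $e_{\pm\delta}\circ e_{\mp\delta}=\pm\tfrac12 h_\delta$ directly from Lemma \ref{final1}. That tacitly assumes $S_\delta$ is closed under $\circ$, so your preliminary statement is exactly what that step needs.

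The trouble is that identity \eqref{q-pre-Lie-1} on $(e_{-\beta},e_\beta,e_\gamma)$ is not linear in the unknowns. Write $e_\beta\circ e_\gamma=xe_{\beta+\gamma}$, $e_{-\beta}\circ e_{\beta+\gamma}=ye_\gamma$, $e_{-\beta}\circ e_\gamma=ze_{\gamma-\beta}$ and $e_\beta\circ e_{\gamma-\beta}=we_\gamma$. The identity then reads $xy-zw=-\tfrac14\gamma(h_\beta)$.
\begin{itemize}
\item Along the $\beta$-string through $\gamma$, the identities of this type involve the unknowns only through products like $xy$. They just say that $2L_\circ(e_{\pm\beta})$ satisfy the $\mathfrak{sl}_2$ relation on the string, and they are invariant under $x\mapsto sx$, $y\mapsto s^{-1}y$.
\item Compatibility would break this scaling, but none of your chosen identities involves $e_\gamma\circ e_\beta$.
\item At the end of the string you get $xy=-\tfrac14\gamma(h_\beta)$. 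That is one equation in two unknowns, not one.
\item The cyclic identity \eqref{q-pre-Lie-2} on $(e_\beta,e_\gamma,e_{-\beta-\gamma})$ adds nothing. Every term is a structure constant times an opposite-root product, so it collapses to the Jacobi identity and never contains $x$.
\item ``The trivial structure is a solution'' gives no uniqueness for a bilinear system.
\end{itemize}
The missing idea, used in the paper, is to take a triple of total weight $0$. Identity \eqref{q-pre-Lie-1} on $(e_{-\beta-\gamma},e_\beta,e_\gamma)$ gives, in $\mathfrak{h}$,
\[
C_{\beta,\gamma}h_{\beta+\gamma}+C_{-\beta-\gamma,\gamma}h_\beta=\tfrac12N_{-\beta-\gamma,\beta}h_\gamma .
\]
This is linear, because each unknown multiplies a known opposite-root product. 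Substitute the Jacobi relation $N_{\beta,\gamma}h_{\beta+\gamma}=N_{\gamma,-\beta-\gamma}h_\beta+N_{-\beta-\gamma,\beta}h_\gamma$ and compare $h_\gamma$-coefficients, using that $h_\beta,h_\gamma$ are independent. This gives $C_{\beta,\gamma}=\tfrac12N_{\beta,\gamma}$ at once, with no induction.

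\textbf{Your preliminary statement is true, but your sketch does not prove it.} For $\epsilon\neq\pm\delta$, the triple $(e_\delta,e_{-\delta},e_\epsilon)$ has $h_\delta\circ e_\epsilon$ on the right. The element $t_\delta$ does not occur in it at all; it only enters \eqref{q-pre-Lie-1} when $e_\delta\circ e_{-\delta}$ appears as an inner product. The integrality fallback for $c_\delta$ on $\ker\delta$ is not an argument either.

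Both points follow from identities you already wrote down. Step (i) reads
\[
2(c_\delta-\delta)(h\circ h')=(2c_\delta-\delta)(h)\,(c_\delta-\delta)(h').
\]
Since $h\circ h'=h'\circ h$ by compatibility, the right side must be symmetric in $h,h'$. This forces $c_\delta\wedge\delta=0$, i.e.\ $c_\delta=\kappa_\delta\delta$. Next combine (i) at $h=h'=h_\delta$ for $\pm\delta$ with the two triples in (ii); these also rule out $\kappa_{\pm\delta}=1$. They give
\[
\kappa_\delta+\kappa_{-\delta}=1,\qquad \kappa_\delta\,\delta(t_\delta)=1-\kappa_\delta,\qquad (\kappa_\delta-1)(\delta(t_\delta)-2)=\kappa_\delta,
\]
hence $(2\kappa_\delta-1)^2=0$. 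With $c_\delta=\tfrac12\delta$, step (i) yields $\delta(h\circ h')=0$ for every root, so $\mathfrak{h}\circ\mathfrak{h}=0$. Finally, \eqref{q-pre-Lie-1} on $(e_\delta,e_{-\delta},h)$ gives $\delta(h)(2t_\delta-h_\delta)=h_\delta\circ h=0$, so $t_\delta=\tfrac12h_\delta$. Step (iii) is not needed.
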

\begin{proof}
{\bf Case 1:} $\beta+\gamma\in\Phi$.

    Write
    \begin{eqnarray*}
        e_\beta\circ e_\gamma=C_{\beta,\gamma}e_{\beta+\gamma},\;\;[e_\beta,e_\gamma]=N_{\beta,\gamma}e_{\beta+\gamma},
    \end{eqnarray*}
    where $C_{\beta,\gamma}\in \mathbb{C}$, $N_{\beta,\gamma}\in\mathbb{C}^\times$ and $N_{\beta,\gamma}=-N_{\gamma,\beta}$. Assume that $e_{-\beta-\gamma}\circ e_\gamma=C_{-\beta-\gamma,\gamma}e_{-\beta}$, where $C_{-\beta-\gamma,\gamma}\in\mathbb{C}$. It follows from Eq. (\ref{q-pre-Lie-1}) and Lemma \ref{final1} that
    \begin{eqnarray*}
        0&=&2(e_{-\beta-\gamma}\circ(e_\beta\circ e_\gamma)-e_\beta\circ(e_{-\beta-\gamma}\circ e_\gamma))-[e_{-\beta-\gamma},e_\beta]\circ e_\gamma\\
        &=&2(C_{\beta,\gamma} e_{-\beta-\gamma}\circ e_{\beta+\gamma}-C_{-\beta-\gamma,\gamma}e_\beta\circ e_{-\beta}  )-N_{-\beta-\gamma,\beta}e_{-\gamma}\circ e_\gamma\\
        &=&-C_{\beta,\gamma}h_{\beta+\gamma}-C_{-\beta-\gamma,\gamma}h_\beta  +\frac{1}{2}N_{-\beta-\gamma,\beta}h_\gamma.
    \end{eqnarray*}
    By Jacobi identity, we obtain
    \begin{eqnarray*}
        0&=&[e_{-\beta-\gamma},[e_\beta,e_\gamma]]+[e_\beta,[e_\gamma,e_{-\beta-\gamma}]]+[e_\gamma,[e_{-\beta-\gamma},e_\beta]]   \\
        &=&N_{\beta,\gamma}[e_{-\beta-\gamma},e_{\beta+\gamma}]+N_{\gamma,-\beta-\gamma}[e_\beta,e_{-\beta}]+N_{-\beta-\gamma,\beta}[e_\gamma,e_{-\gamma}]    \\
        &=& -\frac{1}{2}N_{\beta,\gamma}h_{\beta+\gamma}+\frac{1}{2}N_{\gamma,-\beta-\gamma}h_\beta+\frac{1}{2}N_{-\beta-\gamma,\beta}h_\gamma.
    \end{eqnarray*}
    Hence we get
    \begin{eqnarray*}
       0&=& C_{\beta,\gamma}(\frac{N_{\gamma,-\beta-\gamma}}{N_{\beta,\gamma}}h_\beta+\frac{N_{-\beta-\gamma,\beta}}{N_{\beta,\gamma}}h_\gamma)+C_{-\beta-\gamma,\gamma}h_\beta-\frac{1}{2}N_{-\beta-\gamma,\beta}h_\gamma\\
       &=&(\frac{C_{\beta,\gamma}N_{\gamma,-\beta-\gamma}}{N_{\beta,\gamma}}  +C_{-\beta-\gamma,\gamma})h_\beta+(\frac{C_{\beta,\gamma}N_{-\beta-\gamma,\beta}}{N_{\beta,\gamma}}-\frac{1}{2}N_{-\beta-\gamma,\beta}  )h_\gamma   .
    \end{eqnarray*}
    By observing the coefficients of $h_\gamma$, we have
    \begin{eqnarray*}
        C_{\beta,\gamma}N_{-\beta-\gamma,\beta}=\frac{1}{2}N_{\beta,\gamma}N_{-\beta-\gamma,\beta},
    \end{eqnarray*}
    which implies
    \begin{eqnarray*}
        C_{\beta,\gamma}=\frac{1}{2}N_{\beta,\gamma}.
    \end{eqnarray*}
    Thus $e_\beta\circ e_\gamma=\frac{1}{2}[e_\beta,e_\gamma]$ for any $\beta,\gamma,\beta+\gamma\in\Phi$.

    {\bf Case 2:} $\beta+\gamma\notin\Phi$.

    In this case, we have $e_\beta\circ e_\gamma\in\mathfrak{g}_{\beta+\gamma}=0$ and $[e_\beta, e_\gamma]\in\mathfrak{g}_{\beta+\gamma}=0$. So the conclusion still holds.

    Therefore $e_\beta\circ e_\gamma=\frac{1}{2}[e_\beta,e_\gamma]$ for any $\beta,\gamma\in\Phi$.
    \end{proof}
\begin{lem}\label{final3}
    Let $\beta,\gamma\in\Phi$. Suppose that $(\mathfrak{g},\circ)$ is a compatible $2$-pre-Lie algebra structure on $\mathfrak{g}$. Then we have
        \begin{eqnarray*}
        e_\beta\circ h_\gamma=\frac{1}{2}[e_\beta,h_\gamma].
    \end{eqnarray*}
\end{lem}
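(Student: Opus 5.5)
We will show instead that $h_\gamma\circ e_\beta=\frac{1}{2}[h_\gamma,e_\beta]$ and then transfer the identity to $e_\beta\circ h_\gamma$ by compatibility. As in Lemmas \ref{final1} and \ref{final2}, we use the standing assumption of this subsection that $(\mathfrak{g},\circ)$ is root-graded. Since $\circ$ is compatible, $e_\beta\circ h_\gamma-h_\gamma\circ e_\beta=[e_\beta,h_\gamma]$. Hence $e_\beta\circ h_\gamma=\frac{1}{2}[e_\beta,h_\gamma]$ is equivalent to $h_\gamma\circ e_\beta=\frac{1}{2}[h_\gamma,e_\beta]$.

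\textbf{Case $\beta=\pm\gamma$.} Here $e_\beta$ and $h_\gamma$ lie in $S_\gamma$. Since $S_\gamma$ is closed under $\circ$ by root-grading, $(S_\gamma,\circ)$ is a compatible $2$-pre-Lie algebra structure on $S_\gamma$. Lemma \ref{final1} then gives the claim directly.

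\textbf{Case $\beta\neq\pm\gamma$.} We apply Eq. (\ref{q-pre-Lie-1}) with $q=2$ to the triple $a=e_\gamma$, $b=e_{-\gamma}$, $c=e_\beta$. Since $[e_\gamma,e_{-\gamma}]=h_\gamma$, this gives
\begin{eqnarray*}
h_\gamma\circ e_\beta=2\big(e_\gamma\circ(e_{-\gamma}\circ e_\beta)-e_{-\gamma}\circ(e_\gamma\circ e_\beta)\big).
\end{eqnarray*}
Because $\beta\neq\pm\gamma$, the inner products $e_{\mp\gamma}\circ e_\beta$ lie in root spaces $\mathfrak{g}_{\beta\mp\gamma}$. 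These are either zero or spanned by a single $e_{\beta\mp\gamma}$, never in $\mathfrak{h}$. So Lemma \ref{final2} applies to the inner products, and then again to the outer products, since these are again products of two root vectors. Therefore every product on the right-hand side can be replaced by one half of the corresponding bracket. The right-hand side becomes
\begin{eqnarray*}
2\cdot\tfrac14\big([e_\gamma,[e_{-\gamma},e_\beta]]-[e_{-\gamma},[e_\gamma,e_\beta]]\big).
\end{eqnarray*}
By the Jacobi identity this equals $\frac{1}{2}[[e_\gamma,e_{-\gamma}],e_\beta]=\frac{1}{2}[h_\gamma,e_\beta]$.

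\textbf{Main obstacle.} The only delicate point is the bookkeeping needed to ensure Lemma \ref{final2} is applicable at every step. In particular we must never produce a Cartan element as an intermediate product, which is exactly why the cases $\beta=\pm\gamma$ are separated off and handled by Lemma \ref{final1}. We also note that when $\beta\mp\gamma\notin\Phi\cup\{0\}$, both the product and the bracket vanish by root-grading, so these terms are consistent with the formula.
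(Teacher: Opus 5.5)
Your proof works, relative to Lemmas \ref{final1} and \ref{final2} as the paper uses them, but it takes a different route from the paper.

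The paper applies Eq.~\eqref{q-pre-Lie-1} to the triple $(e_\beta,e_\gamma,e_{-\gamma})$. The unknown then enters as $2e_\beta\circ(e_\gamma\circ e_{-\gamma})=e_\beta\circ h_\gamma$, using Lemma \ref{final1}. The remaining terms are computed with Lemma \ref{final2} and compared with a Jacobi identity, case by case according to whether $\beta+\gamma$ and $\beta-\gamma$ are roots.

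You apply the same identity to $(e_\gamma,e_{-\gamma},e_\beta)$, so $h_\gamma$ appears only as the bracket $[e_\gamma,e_{-\gamma}]$. Every other product is a product of root vectors of nonzero total weight. A single Jacobi identity then gives $h_\gamma\circ e_\beta=\frac12[h_\gamma,e_\beta]$ for all $\beta\neq\pm\gamma$ at once, and compatibility converts this into the claim. In effect you evaluate $2L_\circ(h_\gamma)=[2L_\circ(e_\gamma),2L_\circ(e_{-\gamma})]$ on $e_\beta$, which is the mechanism of the proof of Theorem \ref{thm-sec4}.

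Your version avoids the root-string case analysis and treats $\beta=\pm\gamma$ explicitly. The paper's Case 2 gets that case wrong: it asserts $e_\beta\circ h_\gamma=[e_\beta,h_\gamma]=0$ whenever $\beta\pm\gamma\notin\Phi$. For $\beta=\pm\gamma$, however, $[e_{\pm\gamma},h_\gamma]=\mp2e_{\pm\gamma}\neq0$, so that case has to come from Lemma \ref{final1}, exactly as you do it.

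One sentence of yours needs correcting. Root-grading does not make $S_\gamma$ closed under $\circ$ when $\mathrm{rank}\,\mathfrak g\geq2$. It only puts $e_\gamma\circ e_{-\gamma}$ and $h_\gamma\circ h_\gamma$ in $\mathfrak h$, not in $\mathbb{C}h_\gamma$. The paper applies Lemma \ref{final1} inside $\mathfrak g$ with the same unstated assumption (in Lemma \ref{final2} and in its proof of this lemma). So your argument is no weaker than the paper's, but your justification is not valid as written.

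You can repair it directly. Write $e_{\pm\beta}\circ h=\mu_\pm(h)e_{\pm\beta}$ for $h\in\mathfrak h$. Eq.~\eqref{q-pre-Lie-1} on $(e_{\pm\beta},h,h')$, together with the symmetry $h\circ h'=h'\circ h\in\mathfrak h$, forces $\mu_\pm\in\mathbb{C}\beta$. It also gives two expressions for $\beta(h\circ h')$ that relate the two scalars. Combining this with the triples $(e_{\pm\beta},e_{\mp\beta},e_{\pm\beta})$ yields $\mu_\pm=\mp\frac12\beta$. This proves $e_\beta\circ h=\frac12[e_\beta,h]$ for every $h\in\mathfrak h$ without using Lemmas \ref{final1} or \ref{final2} at all.
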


\begin{proof}
{\bf Case 1:} $\beta+\gamma\in\Phi$.

If $\beta-\gamma\in\Phi$, then it follows from Eq. (\ref{q-pre-Lie-1}), Lemmas \ref{final1} and \ref{final2} that
\begin{eqnarray*}
    0&=&  2(e_\beta\circ(e_\gamma\circ e_{-\gamma})-e_\gamma\circ(e_\beta\circ e_{-\gamma}) )-[e_\beta,e_\gamma]\circ e_{-\gamma}\\
    &=&e_\beta\circ h_\gamma-e_\gamma\circ[e_\beta,e_{-\gamma}]-N_{\beta,\gamma}e_{\beta+\gamma}\circ e_{-\gamma}\\
    &=&e_\beta\circ h_\gamma-N_{\beta,-\gamma}e_\gamma\circ e_{\beta-\gamma}-\frac{1}{2}N_{\beta,\gamma}N_{\beta+\gamma,-\gamma}e_\beta\\
    &=&e_\beta\circ h_\gamma-\frac{1}{2}N_{\beta,-\gamma}N_{\gamma,\beta-\gamma}e_\beta-\frac{1}{2}N_{\beta,\gamma}N_{\beta+\gamma,-\gamma}e_\beta.
\end{eqnarray*}
By Jacobi identity, we get
\begin{eqnarray*}
    0&=&[e_\beta,[e_\gamma,e_{-\gamma}]]+[e_\gamma,[e_{-\gamma},e_\beta]]+[e_{-\gamma},[e_\beta,e_\gamma]]  \\
    &=&[e_\beta,h_\gamma]+N_{-\gamma,\beta}[e_\gamma,e_{\beta-\gamma}]+N_{\beta,\gamma}[e_{-\gamma},e_{\beta+\gamma}]\\
    &=&[e_\beta,h_\gamma]+N_{-\gamma,\beta}N_{\gamma,\beta-\gamma}e_\beta+N_{\beta,\gamma}N_{-\gamma,\beta+\gamma}e_\beta.
\end{eqnarray*}
Hence we have $e_\beta\circ h_\gamma=\frac{1}{2}[e_\beta,h_\gamma]$ for any $\beta,\gamma,\beta+\gamma,\beta-\gamma\in\Phi$. If $\beta-\gamma\notin\Phi$, this identity can be proved similarly. So we obtain $e_\beta\circ h_\gamma=\frac{1}{2}[e_\beta,h_\gamma]$ for any $\beta,\gamma,\beta+\gamma\in\Phi$.

{\bf Case 2:} $\beta+\gamma\notin\Phi$.

If $\beta-\gamma\in\Phi$, then it follows from Eq. (\ref{q-pre-Lie-1}), Lemmas \ref{final1} and \ref{final2} that
\begin{eqnarray*}
    0&=&  2(e_\beta\circ(e_\gamma\circ e_{-\gamma})-e_\gamma\circ(e_\beta\circ e_{-\gamma}) )-[e_\beta,e_\gamma]\circ e_{-\gamma}=e_\beta\circ h_\gamma-e_\gamma\circ[e_\beta,e_{-\gamma}]\\
    &=&e_\beta\circ h_\gamma-N_{\beta,-\gamma}e_\gamma\circ e_{\beta-\gamma}=e_\beta\circ h_\gamma-\frac{1}{2}N_{\beta,-\gamma}N_{\gamma,\beta-\gamma}e_\beta.
\end{eqnarray*}
By Jacobi identity, we get
\begin{eqnarray*}
    0&=&[e_\beta,[e_\gamma,e_{-\gamma}]]+[e_\gamma,[e_{-\gamma},e_\beta]]+[e_{-\gamma},[e_\beta,e_\gamma]]  \\
    &=&[e_\beta,h_\gamma]+N_{-\gamma,\beta}[e_\gamma,e_{\beta-\gamma}]=[e_\beta,h_\gamma]+N_{-\gamma,\beta}N_{\gamma,\beta-\gamma}e_\beta.
\end{eqnarray*}
Hence we have $e_\beta\circ h_\gamma=\frac{1}{2}[e_\beta,h_\gamma]$ for any $\beta,\gamma,\beta-\gamma\in\Phi$ and $\beta+\gamma\notin\Phi$. If $\beta-\gamma\notin\Phi$, then $e_\beta\circ h_\gamma=[e_\beta,h_\gamma]=0$. Therefore, the conclusion still holds.

Therefore, we obtain $e_\beta\circ h_\gamma=\frac{1}{2}[e_\beta,h_\gamma]$ for any $\beta,\gamma\in\Phi$.
\end{proof}
\begin{thm}\label{thm-sec4}
    Let $(\mathfrak{g},\circ)$ be a compatible root-graded $2$-pre-Lie algebra structure on a finite-dimensional complex simple Lie algebra $\mathfrak{g}$. Then $(\mathfrak{g},\circ)$ is the trivial compatible root-graded $2$-pre-Lie algebra on $\mathfrak{g}$, i.e.,
    \begin{eqnarray*}
        a\circ b=\frac{1}{2}[a,b],\;\;a,b\in\mathfrak{g}.
    \end{eqnarray*}
\end{thm}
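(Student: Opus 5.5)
Since $\mathfrak{g}$ is spanned by the root vectors $e_\beta$ ($\beta\in\Phi$) together with the coroots $h_\gamma$ ($\gamma\in\Phi$), it suffices to check $a\circ b=\frac12[a,b]$ for $a,b$ running over these spanning elements. Lemma \ref{final2} already gives $e_\beta\circ e_\gamma=\frac12[e_\beta,e_\gamma]$, and Lemma \ref{final3} gives $e_\beta\circ h_\gamma=\frac12[e_\beta,h_\gamma]$. It remains to treat products of the form $h_\gamma\circ e_\beta$ and $h_\beta\circ h_\gamma$.

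For $h_\gamma\circ e_\beta$, I will use the commutator. Compatibility gives
\[
h_\gamma\circ e_\beta=e_\beta\circ h_\gamma+[h_\gamma,e_\beta].
\]
By Lemma \ref{final3} the right-hand side equals
\[
\tfrac12[e_\beta,h_\gamma]+[h_\gamma,e_\beta]=\tfrac12[h_\gamma,e_\beta].
\]

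For $h_\beta\circ h_\gamma$, I will write $h_\gamma=[e_\gamma,e_{-\gamma}]$ and expand. By compatibility,
\[
h_\beta\circ h_\gamma=h_\beta\circ(e_\gamma\circ e_{-\gamma})-h_\beta\circ(e_{-\gamma}\circ e_\gamma).
\]
To handle $h_\beta\circ(e_\gamma\circ e_{-\gamma})$, I apply Eq. (\ref{q-pre-Lie-1}) with $q=2$ to $a=h_\beta$, $b=e_\gamma$, $c=e_{-\gamma}$:
\[
2\bigl(h_\beta\circ(e_\gamma\circ e_{-\gamma})-e_\gamma\circ(h_\beta\circ e_{-\gamma})\bigr)=[h_\beta,e_\gamma]\circ e_{-\gamma}.
\]
Every term other than $h_\beta\circ(e_\gamma\circ e_{-\gamma})$ involves only products already known to equal half the bracket. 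Hence $h_\beta\circ(e_\gamma\circ e_{-\gamma})$ is determined. The same argument with the roles of $e_\gamma$ and $e_{-\gamma}$ swapped determines the second product $h_\beta\circ(e_{-\gamma}\circ e_\gamma)$.

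Concretely, I write $e_\gamma\circ e_{-\gamma}=\frac12 h_\gamma$ and $[h_\beta,e_{\pm\gamma}]=\pm\gamma(h_\beta)e_{\pm\gamma}$, using Lemma \ref{final2}. This yields
\[
h_\beta\circ h_\gamma=\gamma(h_\beta)\Bigl(2\,e_\gamma\circ(-e_{-\gamma})+e_\gamma\circ e_{-\gamma}\Bigr)=-\tfrac12\gamma(h_\beta)h_\gamma .
\]
The expected answer is $0=\frac12[h_\beta,h_\gamma]$, so I must recheck signs; the second expansion (with $e_\gamma,e_{-\gamma}$ swapped) contributes the opposite term, and the two should cancel. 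This sign bookkeeping is the main obstacle.

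If the direct expansion proves awkward, an alternative route is available. Root-grading gives $h_\beta\circ h_\gamma\in\mathfrak h$. Then Eq. (\ref{q-pre-Lie-1}) with $a=e_\alpha$, $b=h_\beta$, $c=h_\gamma$ reads
\[
2\bigl(e_\alpha\circ(h_\beta\circ h_\gamma)-h_\beta\circ(e_\alpha\circ h_\gamma)\bigr)=[e_\alpha,h_\beta]\circ h_\gamma .
\]
Substituting the known products, this reduces to $\alpha(h_\beta\circ h_\gamma)\,e_\alpha=0$ for all roots $\alpha$. Since the roots span $\mathfrak h^*$, this forces $h_\beta\circ h_\gamma=0$.

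This alternative is cleaner, and I would use it as the primary argument for the $\mathfrak h\circ\mathfrak h$ part. Combining all cases, bilinearity gives $a\circ b=\frac12[a,b]$ for all $a,b\in\mathfrak g$.
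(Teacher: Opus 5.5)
Your final argument, with the $(e_\alpha,h_\beta,h_\gamma)$ step as the primary one for $\mathfrak h\circ\mathfrak h$, is correct, and it takes a somewhat different route from the paper.

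\textbf{Comparison with the paper.} Both proofs start from Lemmas \ref{final2} and \ref{final3}, which say that $2L_\circ(e_\beta)$ agrees with ${\rm ad}(e_\beta)$ on the spanning set $\{e_\gamma,h_\gamma\}$. The paper then handles $h\circ e$ and $h\circ h$ in one step. Since $2L_\circ$ is a representation (Proposition \ref{1/qLcirc}), it gets $2L_\circ(h_\beta)=[2L_\circ(e_\beta),2L_\circ(e_{-\beta})]=[{\rm ad}(e_\beta),{\rm ad}(e_{-\beta})]={\rm ad}(h_\beta)$.

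You treat the two remaining product types separately:
\begin{itemize}
\item $h\circ e$ via the commutator relation and Lemma \ref{final3};
\item $h\circ h$ via the instance $(e_\alpha,h_\beta,h_\gamma)$ of Eq. (\ref{q-pre-Lie-1}).
\end{itemize}
The second step uses root-grading to place $h_\beta\circ h_\gamma$ in $\mathfrak h$, so that Lemma \ref{final3}, extended linearly to all of $\mathfrak h$, applies to $e_\alpha\circ(h_\beta\circ h_\gamma)$. It also uses the fact that $\Phi$ spans $\mathfrak h^*$. I checked your reduction to $\alpha(h_\beta\circ h_\gamma)e_\alpha=0$, and it is right. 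The paper's version is shorter and needs no extra input at this stage. Yours is more explicit about which instances of the identity are actually used.

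\textbf{The abandoned first expansion.} You should fix or delete it rather than leave a wrong formula in the text. The issue is not a sign. You used $h_\beta\circ e_{-\gamma}=-\gamma(h_\beta)e_{-\gamma}$, but your own $h\circ e$ step gives $h_\beta\circ e_{-\gamma}=\frac12[h_\beta,e_{-\gamma}]=-\frac12\gamma(h_\beta)e_{-\gamma}$. With the factor $\frac12$ restored, the single instance $(h_\beta,e_\gamma,e_{-\gamma})$ together with $e_\gamma\circ e_{-\gamma}=\frac12 h_\gamma$ already gives
\[
h_\beta\circ h_\gamma=2h_\beta\circ(e_\gamma\circ e_{-\gamma})=-\gamma(h_\beta)\,e_\gamma\circ e_{-\gamma}+\gamma(h_\beta)\,e_\gamma\circ e_{-\gamma}=0 .
\]
So no cancellation between two expansions is needed. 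This direct route also does not need $h_\beta\circ h_\gamma\in\mathfrak h$ or the fact that $\Phi$ spans $\mathfrak h^*$. The formula $h_\beta\circ h_\gamma=-\frac12\gamma(h_\beta)h_\gamma$ currently in your text is false and should be removed.
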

\begin{proof}
    It follows from Proposition \ref{1/qLcirc} that $(\mathfrak{g},2L_\circ)$ is a representation of $(\mathfrak{g},[\cdot,\cdot])$. Then for any $\beta,\gamma\in\Phi$, by Lemmas \ref{final1}-\ref{final3}, we obtain
    \begin{eqnarray*}
        &&2L_\circ(e_\beta)e_\gamma=2e_\beta\circ e_\gamma=[e_\beta,e_\gamma]={\rm ad}(e_\beta)e_\gamma,\\
        &&2L_\circ(e_\beta)h_\gamma=2e_\beta\circ h_\gamma=[e_\beta,h_\gamma]={\rm ad}(e_\beta)h_\gamma.
    \end{eqnarray*}
   Therefore, we have $2L_\circ(e_\beta)={\rm ad}(e_\beta)$ for any $\beta\in \Phi$. Then we have
    \begin{eqnarray*}
        2L_\circ(h_\beta)=2L_\circ([e_\beta,e_{-\beta}])=[2L_\circ(e_\beta),2L_\circ(e_{-\beta})]=[{\rm ad}(e_\beta),{\rm ad}(e_{-\beta})]={\rm ad}([e_\beta,e_{-\beta}])={\rm ad}(h_\beta).
    \end{eqnarray*}
    Hence  we obtain $2L_\circ={\rm ad}$. Thus we have
    \begin{eqnarray*}
        2a\circ b=2L_\circ(a)b={\rm ad}(a)b=[a,b],\;\;a,b\in\mathfrak{g}.
    \end{eqnarray*}
    This completes the proof.
\end{proof}

\begin{cor}
For any finite-dimensional complex simple Lie algebra $\mathfrak{g}$ other than $\mathfrak{sl}_2(\mathbb{C})$, a compatible root-graded $q$-pre-Lie algebra structure exists only when $q=2$.
\end{cor}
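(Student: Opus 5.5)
The plan is to reduce to $\mathfrak{sl}_2(\mathbb{C})$-subalgebras and then quote the known result for $q=-1$. Let $\mathfrak{g}$ be a finite-dimensional complex simple Lie algebra other than $\mathfrak{sl}_2(\mathbb{C})$. Suppose $(\mathfrak{g},\circ)$ is a compatible root-graded $q$-pre-Lie algebra structure. Since the trivial structure $a\circ b=\frac{1}{2}[a,b]$ is root-graded and is a $2$-pre-Lie algebra, existence for $q=2$ is clear (Theorem \ref{thm-sec4} moreover shows it is the only one). So it suffices to show that $q=2$ is forced.

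Fix any root $\alpha\in\Phi$ and consider $S_\alpha={\rm span}_\mathbb{C}\{e_\alpha,e_{-\alpha},h_\alpha\}\cong\mathfrak{sl}_2(\mathbb{C})$. Root-gradedness gives the following inclusions:
\begin{itemize}
\item $\mathfrak{g}_{\pm\alpha}\circ\mathfrak{g}_{\pm\alpha}\subseteq\mathfrak{g}_{\pm2\alpha}=0$;
\item $\mathfrak{g}_{\alpha}\circ\mathfrak{g}_{-\alpha}\subseteq\mathfrak{h}$;
\item $\mathfrak{h}\circ\mathfrak{g}_{\pm\alpha}\subseteq \mathfrak{g}_{\pm\alpha}$, and $\mathfrak{h}\circ\mathfrak{h}\subseteq\mathfrak{h}$.
\end{itemize}

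The main obstacle is that $S_\alpha$ need not be a $\circ$-subalgebra. For instance, $e_\alpha\circ e_{-\alpha}$ and $h_\alpha\circ h_\alpha$ lie in $\mathfrak{h}$ but perhaps not in $\mathbb{C}h_\alpha$. Two remedies are available.

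First, one can rerun the computations in the proof of Theorem \ref{thm-sl2}, projecting onto suitable components. Write $h_\alpha\circ e_{\pm\alpha}=\alpha_1^{\pm}e_{\pm\alpha}$, $e_\alpha\circ e_{-\alpha}=x$ and $h_\alpha\circ h_\alpha=y$ with $x,y\in\mathfrak{h}$. The commutator constraints give $e_{\alpha}\circ h_\alpha=h_\alpha\circ e_\alpha-2e_\alpha$ and $e_{-\alpha}\circ e_\alpha=x-h_\alpha$. The identities used in that proof involve only these products, together with $e_{\pm\alpha}\circ x'$ for $x'\in\mathfrak{h}$. The terms $e_{\pm\alpha}\circ x'$ can be rewritten via the commutator as $x'\circ e_{\pm\alpha}\mp\alpha(x')e_{\pm\alpha}$, and then require knowledge of $L_\circ(\mathfrak{h})$ on $\mathfrak{g}_{\pm\alpha}$. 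This route is messy.

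The cleaner route is representation-theoretic. The case $q=0$ is excluded directly: it forces $[a,b]\circ c=0$ for all $a,b,c$, hence $\mathfrak{g}\circ\mathfrak{g}=[\mathfrak{g},\mathfrak{g}]\circ\mathfrak{g}=0$, so $[\cdot,\cdot]=0$, a contradiction. For $q\neq 0$, Proposition \ref{1/qLcirc} shows that $qL_\circ$ is a representation of $\mathfrak{g}$. Since $\mathfrak{h}$ is abelian, $qL_\circ(\mathfrak{h})$ is a family of commuting operators. By root-gradedness each $\mathfrak{g}_\beta$ ($\beta\in\Phi$) is one-dimensional and preserved by $L_\circ(\mathfrak{h})$, so $L_\circ(h)e_\beta=\mu_\beta(h)e_\beta$. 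Then $qL_\circ([h,e_\beta])=[qL_\circ(h),qL_\circ(e_\beta)]$, applied to suitable vectors, yields linear relations among the $\mu_\beta$.

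I expect this to parallel Lemma \ref{alpha1beta1} and the first steps of the proof of Theorem \ref{thm-sl2}. Using rank at least $2$, one would find roots $\beta,\gamma$ with $\beta+\gamma\in\Phi$ and apply Eq.~(\ref{q-pre-Lie-1}) to triples such as $(e_{-\beta-\gamma},e_\beta,e_\gamma)$, as in Lemma \ref{final2}, keeping $q$ general.

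Having derived the $\mathfrak{sl}_2$-type constraints, the argument of Theorem \ref{thm-sl2} gives $q\in\{2,-1\}$. It remains to rule out $q=-1$, which is exactly the result of \cite{BG2}: no compatible root-graded anti-pre-Lie ($-1$-pre-Lie) structure exists on simple $\mathfrak{g}\neq\mathfrak{sl}_2(\mathbb{C})$. Hence $q=2$.

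The hardest step is justifying the restriction to $S_\alpha$, i.e.\ showing that the $\mathfrak{h}$-components off $\mathbb{C}h_\alpha$ do not break the computation of Theorem \ref{thm-sl2}. If the projections do not close up, I would instead pair with the Killing form: apply a functional vanishing on $h_\alpha^{\perp}\cap\mathfrak{h}$ to the $\mathfrak{h}$-valued identities. The coefficient equations in the proof of Theorem \ref{thm-sl2} are read off along $e_{\pm\alpha}$ and $h_\alpha$ only, so they should survive this projection.
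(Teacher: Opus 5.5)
\textbf{There is a genuine gap at the decisive step.} Your treatment of $q=0$ and of existence for $q=2$ is fine. Your architecture is also the paper's: its proof combines \cite{BG2} with Theorem~\ref{thm-sl2} (and Theorem~\ref{thm-sec4}), applying Theorem~\ref{thm-sl2} to every triple $S_\alpha$. This is announced at the opening of \S4.2, and Lemma~\ref{final1} is used the same way. You are right that this tacitly assumes $S_\alpha$ is closed under $\circ$. Root-gradedness does not give that, since $x=e_\alpha\circ e_{-\alpha}$ and $y=h_\alpha\circ h_\alpha$ are only known to lie in $\mathfrak h$. The paper passes over this silently.

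Your proposal does not close the gap either. The sentence ``having derived the $\mathfrak{sl}_2$-type constraints, the argument of Theorem~\ref{thm-sl2} gives $q\in\{2,-1\}$'' is never substantiated, and the Killing-form fallback fails. Write $x=\gamma_1h_\alpha+x'$ and $y=\lambda_1h_\alpha+y'$ with $x',y'\in\ker\alpha=h_\alpha^{\perp}\cap\mathfrak h$. For $h'\in\ker\alpha$ one has $e_{\pm\alpha}\circ h'=h'\circ e_{\pm\alpha}=\mu_{\pm\alpha}(h')e_{\pm\alpha}$, which need not vanish. So the $e_{\pm\alpha}$-coefficients of the identities in Theorem~\ref{thm-sl2} pick up new unknowns. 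With $a=\mu_\alpha(h_\alpha)$:
\begin{itemize}
\item the $(e_\alpha,h_\alpha,h_\alpha)$ instance of Eq.~(\ref{q-pre-Lie-1}) becomes $q\bigl(\lambda_1(a-2)+\mu_\alpha(y')-a(a-2)\bigr)=-2(a-2)$;
\item the contradiction in Lemma~\ref{alpha1beta1} degenerates to $q\,\mu_\alpha(x')=2$.
\end{itemize}
Projecting the $\mathfrak h$-valued identity from Eq.~(\ref{q-pre-Lie-2}) does nothing about these terms, so $q$ is no longer determined.

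Moreover, your rerun and projection remedies only use products inside $U=\mathfrak g_\alpha\oplus\mathfrak h\oplus\mathfrak g_{-\alpha}$, and no such argument can reach the conclusion. Take $\mathfrak g=\mathfrak{sl}_3$ and $\alpha=\epsilon_1-\epsilon_2$. The map $A\mapsto\mathrm{diag}(A,-\mathrm{tr}A)$ identifies $\mathfrak{gl}_2$ with $U$. The transported matrix product is a graded compatible $1$-pre-Lie structure on $U$ with $e_\alpha\circ e_{-\alpha}=\mathrm{diag}(1,0,-1)\notin\mathbb C h_\alpha$. You never exclude $q=1$ separately, so you need a global input there, for example the classical fact that a semisimple Lie algebra admits no compatible pre-Lie structure. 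Your remark about triples $(e_{-\beta-\gamma},e_\beta,e_\gamma)$ is global, but it is not developed.

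\textbf{How the gap can be closed for $q\neq0,1$.} Your ``cleaner route'' can be made rigorous, but it needs an actual argument:
\begin{itemize}
\item $U$ is a $\circ$-subalgebra, and $qL_\circ|_U$ is a representation of $U=S_\alpha\oplus\ker\alpha$.
\item By root-gradedness, $qL_\circ(h_\alpha)-\mathrm{ad}\,h_\alpha$ is semisimple and commutes with $qL_\circ(U)$.
\item Its eigenspace decomposition leaves only two $S_\alpha$-module types.
\item $V(2)\oplus V(0)^{l-1}$ forces $q^2-q-2=0$ and $x,y\in\mathbb C h_\alpha$.
\item $V(1)\oplus V(1)\oplus V(0)^{l-2}$ forces $2q^2-q-1=0$. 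For $q\neq1$ it is then excluded by the relations $z\circ b=b\circ z$ ($z\in\ker\alpha$, $b\in U$): they push the $(l-1)$-dimensional $\ker\alpha$ into the $(l-2)$-dimensional trivial part, which is impossible.
\end{itemize}
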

\begin{proof}
It was shown in \cite{BG2} that no compatible root-graded $(-1)$-pre-Lie algebra structure exists on any finite-dimensional complex simple Lie algebra other than $\mathfrak{sl}_2(\mathbb{C})$. Combining this with Theorems \ref{thm-sl2} and \ref{thm-sec4} yields the desired conclusion.
\end{proof}

\noindent {\bf Acknowledgments.} This research is supported by Zhejiang
Provincial Natural Science Foundation of China (No. LZ25A010004) and
Natural Science Foundation of China (No. 12171129).

\smallskip

\noindent
{\bf Declaration of interests. } The authors have no conflicts of interest to disclose.

\smallskip

\noindent
{\bf Data availability. } No new data were created or analyzed in this study.

\end{document}